\renewcommand{\ldots}{\dotsc}
\newtheorem{algorithm}{Weak Galerkin Algorithm}
\newcommand{\bu}{{\bf u}}
\newcommand{\bw}{{\bf w}}
\newcommand{\bx}{{\bf x}}
\newcommand{\be}{{\bf e}}
\newcommand{\bv}{{\mathbf v}}
\def\T{{\mathcal T}}
\def\pT{{\partial T}}
\def\e{\varepsilon}
\def\bbf{{\bf f}}
\def\bn{{\bf n}}
\def\3bar{{|\hspace{-.02in}|\hspace{-.02in}|}}
\def\bbR{\mathbb{R}}
\newcommand{\bm}[1]{\mbox{\boldmath{$#1$}}}
\def\bPhi{\bm\Phi}
\title {A Locking-Free Weak Galerkin Finite Element Method for Elasticity Problems
in the Primal Formulation}
\author{
Chunmei Wang\thanks{School of Mathematics, Georgia Institute of
Technology, Atlanta, Georgia, 30332; Taizhou College, Nanjing Normal
University, Taizhou, China 225300. This research was partially
supported by National Science Foundation award \#DMS-1522586.} \and
Junping Wang\thanks{Division of Mathematical Sciences, National
Science Foundation, Arlington, VA 22230 (jwang@nsf.gov). The
research of Junping Wang was supported by the NSF IR/D program,
while working at National Science Foundation. However, any opinion,
finding, and conclusions or recommendations expressed in this
material are those of the author and do not necessarily reflect the
views of the National Science Foundation.} \and Ruishu
Wang\thanks{Department of Mathematics, Jilin University, Changchun
China.} \and Ran Zhang\thanks{Department of Mathematics, Jilin
University, Changchun China. This research was supported in part by
China Natural National Science Foundation (11271157, 11371171,
11471141), and by the Program of China Ministry of Education for New
Century Excellent Talents in Universities.}}
\begin{document}

\maketitle

\begin{abstract}
This paper presents an arbitrary order locking-free numerical scheme
for linear elasticity on general polygonal/polyhedral partitions by
using weak Galerkin (WG) finite element methods. Like other WG
methods, the key idea for the linear elasticity is to introduce
discrete weak strain and stress tensors which are defined and
computed by solving inexpensive local problems on each element. Such
local problems are derived from weak formulations of the
corresponding differential operators through integration by parts.
Locking-free error estimates of optimal order are derived in a
discrete $H^1$-norm and the usual $L^2$-norm for the approximate
displacement when the exact solution is smooth. Numerical results
are presented to demonstrate the efficiency, accuracy, and the
locking-free property of the weak Galerkin finite element method.
\end{abstract}

\begin{keywords} weak Galerkin, finite element methods, Korn's inequality,
weak divergence, weak gradient, linear elasticity, locking-free,
polygonal meshes, polyhedral meshes.
\end{keywords}

\begin{AMS}
Primary 65N30, 65N15, 74S05; Secondary 35J50, 74B05.
\end{AMS}

\pagestyle{myheadings}

\section{Introduction}
In this paper, we are concerned with the development of efficient
new numerical methods for linear elasticity equations by using the
weak Galerkin finite element method recently developed in
\cite{wy1202, wysec2, wy1302, mwy2}. Let $\Omega\subset\mathbb R^d\
(d = 2, 3)$ be an open bounded and connected domain with Lipschitz
continuous boundary $\Gamma=\partial\Omega$ of an elastic body
subject to an exterior force $\bbf$ and a given displacement
boundary condition. The kinematic model of linear elasticity seeks a
displacement vector field $\bu$ satisfying
\begin{eqnarray}\label{primal_model}
-\nabla\cdot\sigma(\bu)&=&\bbf,\qquad\text{in}\ \Omega,\\
\bu&=&\widehat{\bu},\qquad\text{on}\ \Gamma, \label{bc1}
\end{eqnarray}
where $\sigma(\bu)$ is the symmetric Cauchy stress tensor. For
linear, homogeneous, and isotropic materials, the Cauchy stress
tensor is given by
$$
\sigma(\bu)=2\mu\varepsilon(\bu)+\lambda(\nabla\cdot\bu)\textbf{I},
$$
where $\varepsilon(\bu)=\frac{1}{2}(\nabla \bu+\nabla \bu^T)$ is the
linear strain tensor, $\mu$ and $\lambda$ are the Lam\'{e}
constants. For linear plane strain, the Lam\'{e} constants are given
by
$$
\lambda= \frac{E\nu}{(1+\nu)(1-2\nu)},\qquad \mu=\frac{E}{2(1+\nu)},
$$
where $E$ is the elasticity modulus and $\nu$ is Poisson's ratio.

Denote by $(\cdot,\cdot)$ the $L^2$-inner product in either
$L^2(\Omega)$, $[L^2(\Omega)]^{d}$, or $[L^2(\Omega)]^{d\times d}$,
as appropriate. A weak formulation for (\ref{primal_model}) in the
primal form reads as follows: Find $\bu\in [H^1(\Omega)]^d$
satisfying $\bu=\widehat\bu$ on $\Gamma$ and
\begin{eqnarray}\label{model-weak}
2(\mu\varepsilon(\bu),\varepsilon(\bv))+(\lambda\nabla
\cdot\bu,\nabla \cdot\bv)&=&(\bbf,\bv), \qquad \forall \bv\in
[H_0^1(\Omega)]^d,
\end{eqnarray}
where $H^1(\Omega)$ is the usual Sobolev space defined by
$$
H^1(\Omega)=\{v:\; v\in L^2(\Omega),\ \nabla v\in L^2(\Omega)\},
$$
and $H_0^1(\Omega)$ is the closed subspace of $H^1(\Omega)$
consisting of all the functions with vanishing boundary value.

The main objective of this paper is to study an application of the
weak Galerkin finite element method \cite{wy1202, wysec2, wy1302,
ww-survey, mwy, wwbi} to the linear elasticity problem
(\ref{primal_model})-(\ref{bc1}) based on the primal formulation
(\ref{model-weak}). The weak Galerkin (WG) refers to a generic
finite element technique for partial differential equations where
differential operators are approximated or reconstructed by solving
inexpensive local problems on each element. Such local problems are
often derived from weak formulations of the corresponding
differential operators through integration by parts. Recent work on
WG has revealed that the concept of discrete weak derivatives offers
a new paradigm in the discretiztion of partial differential
equations. The resulting numerical schemes often possess a great
robustness in stability and convergence for which other competing
methods are hard to achieve. For the linear elasticity problem
(\ref{model-weak}), we shall demonstrate that the WG numerical
approximations are not only accurate and robust with respect to the
polygonal/polyhedral partition of the domain, but also naturally
``locking-free" in terms of the Lam\'{e} constant $\lambda$. This is
a result that the standard conforming finite element method does not
have.

``Locking" refers to a phenomenon of numerical approximations for a
certain problems whose mathematical formulations involve a parameter
dependency. For the linear elasticity problem, the parameter is the
Poisson ratio $\nu$. For $\nu$ close to $\frac12$ (i.e., when the
material is nearly incompressible), it is well known that various
finite element schemes, such as the continuous piecewise linear
elements, results in poor observed convergence rates in the
displacements. In 1983, Vogelious \cite{vogelius} showed absence of
locking for the $p$-version of the finite element method on smooth
domains. Later on, Scott and Vegelious \cite{scott-vogelius} proved
that no locking results when polynomials of degree $k\ge 4$ are used
on triangular meshes. However, Babu\v{s}ka and Suri
\cite{babuska-suri} found, for conforming methods, locking cannot be
avoided on quadrilateral meshes for any polynomial of degree $k\ge
1$. In the discontinuous Galerkin context, Hansbo and Larson
\cite{larson} proved that the numerical approximation arising from a
discontinuous Galerkin method is locking-free for any values of
$k\ge 1$, also see \cite{Wihler} for the case of $k=1$. In
\cite{Daniele.01}, Daniele and Nicaise designed a locking-free
discontinuous Galerkin method for composite materials featuring
quasi-incompressible and compressible sections.

Locking occurs because for the limit case $\nu=\frac12$ (or
$\lambda=\infty$), the exact solution of the displacement must
satisfy the constraint $\nabla\cdot\bu=0$. But locking is not a
difficult issue to resolve for the linear elasticity equation. One
possibility is the use of mixed methods by reformulating the linear
elasticity equation as a generalized Stokes equation (see
\cite{Brezzi-Fortin, Girault, abd1984} and the references cited
therein). Specifically, by introducing a pressure variable
$p=\lambda\nabla\cdot\bu$, the elasticity problem
(\ref{primal_model})-(\ref{bc1}) can be reformulated as follows:
Find $\bu\in [H^1(\Omega)]^d$ and $p\in L^2(\Omega)$ satisfying
$\bu=\widehat\bu$ on $\Gamma$, the compatibility condition
$\int_\Omega \lambda^{-1} p dx = \int_\Gamma \widehat\bu\cdot\bn
ds$, and the following equations
\begin{align}\label{var1}
2( \mu\varepsilon(\bu),\varepsilon(\bv))+(\nabla
\cdot\bv,p)&=(\bbf,\bv), \qquad \forall
\bv\in [H_0^1(\Omega)]^d,\\
(\nabla\cdot \bu,q)-(\lambda^{-1}p,q) &=0,\qquad\qquad \forall q\in
L^2_0(\Omega). \label{var2}
\end{align}
Here $\bn$ is the outward normal direction on $\Gamma$, and
$L_0^2(\Omega)$ is the closed subspace of $L^2(\Omega)$ consisting
of all functions with mean-value zero. Consequently, any finite
elements that are stable for the Stokes problem would provide a
locking-free approximation for the linear elasticity problem
(\ref{primal_model})-(\ref{bc1}), but at the cost of solving a
saddle-point problem with an additional pressure variable.

The weak Galerkin method can also be applied to the linear
elasticity problem based on the mixed formulation
(\ref{var1})-(\ref{var2}). In principle, such applications should
yield locking-free numerical approximations for the displacement
variable. Surprisingly, we found that the weak Galerkin finite
element method based on the primal formulation (\ref{model-weak}) is
equivalent to the weak Galerkin when applied to the mixed
formulation (\ref{var1})-(\ref{var2}). This equivalence indicates
that the weak Galerkin approximation arising from the primal
formulation (\ref{model-weak}) might be locking-free. The main goal
of this paper is to provide a rigorous mathematical justification to
the locking-free nature of weak Galerkin. Specifically, we shall
perform the following tasks in this study: (1) propose two weak
Galerkin finite element schemes, one based on the primal formulation
(\ref{model-weak}) and the other based on the mixed formulation
(\ref{var1})-(\ref{var2}); (2) show that the two weak Galerkin
finite element schemes are equivalent; (3) establish a locking-free
convergence theory for the WG scheme based on
(\ref{var1})-(\ref{var2}); and (4) numerically demonstrate the
accuracy and the locking-free property of the proposed weak Galerkin
finite element methods. The main mathematical challenge of this
research lies in the error estimate for shape-regular finite element
partitions consisting of arbitrary polygon or polyhedra. The
corresponding technicality is represented by the inequality
(\ref{EQ:July29:815}) in Lemma \ref{korninqua}, which is a result of
a creative use of the Korn's inequality and the domain inverse
inequality \cite{wy1202}.

Throughout the paper, we will follow the usual notation for Sobolev
spaces and norms \cite{ciarlet-fem}. For any open bounded domain
$D\subset \mathbb{R}^d$ with Lipschitz continuous boundary, we use
$\|\cdot\|_{s,D}$ and $|\cdot|_{s,D}$ to denote the norm and
seminorms in the Sobolev space $H^s(D)$ for any $s\ge 0$,
respectively. The inner product in $H^s(D)$ is denoted by
$(\cdot,\cdot)_{s,D}$. The space $H^0(D)$ coincides with $L^2(D)$,
for which the norm and the inner product are denoted by $\|\cdot
\|_{D}$ and $(\cdot,\cdot)_{D}$, respectively. When $D=\Omega$, we
shall drop the subscript $D$ in the norm and inner product notation.

The paper is organized as follows. Section 2 is devoted to a
discussion of weak divergence and weak gradient for vector-valued
functions as well as their discrete analogues. In Section 3, we
present a weak Galerkin finite element method for the linear
elasticity problem based on the primal formulation
(\ref{model-weak}). In Section 4, we describe another weak Galerkin
finite element method based on the mixed formulation
(\ref{var1})-(\ref{var2}). It is also shown in Section 4 that the
two weak Galerkin methods are equivalent. Section 5 is devoted to a
discussion of stability conditions (i.e., the {\em inf-sup}
condition and coercivity estimates) for the mixed weak Galerkin
finite element method. In Section 6, we prepare ourselves for error
estimates by deriving an identity. Section 7 is devoted to the
establishment of an optimal order error estimate in a discrete
$H^1$-norm. In Section 8, we use the usual duality argument to
derive an optimal order error estimate in the $L^2$-norm for the
displacement variable. In Section 9, we derive some supporting tools
and inequalities useful for error analysis. Finally, in Section 10,
we report some numerical results that demonstrate the accuracy and
locking-free nature of the proposed weak Galerkin finite element
methods for the linear elasticity problem.

\section{Weak Divergence and Weak Gradient Operators}
The divergence and gradient operators are two primary differential
operators in the variational problem (\ref{var1})-(\ref{var2}). The
goal of this section is to review the definition and computation of
weak gradient and weak divergence operators which have been
introduced and studied in applications to other partial differential
equations \cite{wysec2, wy1202, wy1302}.

Let $K$ be a polygon in 2D or polyhedra in 3D with boundary
$\partial K$. Define the space of weak vector-valued functions in
$K$ as follows
$$
V(K)=\{\bv=\{\bv_0,\bv_b\}:\; \bv_0\in [L^2(K)]^d,\bv_b \in
[L^2(\partial K)]^d\},
$$
where $\bv_0$ and $\bv_b$ represent the values of $\bv$ in $K$ and
on the boundary $\partial K$, respectively. Note that $\bv_b$ is not
necessarily related to the trace of $\bv_0$ should it be
well-defined. Denote by $\langle\cdot,\cdot\rangle_{\partial K}$ the
standard inner-product in either $L^2(\partial K)$ or $[L^2(\partial
K)]^d$, as appropriate.

\begin{definition} \cite{wy1202, wy1302}(weak divergence)
The weak divergence of $\bv\in V(K)$, denoted by $\nabla_w\cdot
\bv$, is a bounded linear functional in the Sobolev space $H^1(K)$,
so that its action on any $\phi\in H^1(K)$ is given by
\begin{equation*}
\langle\nabla_w\cdot \bv,\phi\rangle_K:=-(\bv_0,\nabla
\phi)_K+\langle \bv_b\cdot \bn,\phi\rangle_{\partial
K},
\end{equation*}
where $\bn$ is the unit outward normal direction on $\partial K$.
\end{definition}

For any non-negative integer $r$, denote by $P_r(K)$ the set of
polynomials with degree $r$ or less on $K$.

\begin{definition} \cite{wy1202, wy1302}(discrete weak divergence)
The discrete weak divergence  of $\bv\in V(K)$, denoted by
$\nabla_{w,r,K}\cdot \bv$, is the unique polynomial in
$P_r(K)$, satisfying
\begin{equation}\label{2.2}
(\nabla_{w,r,K}\cdot \bv,\phi)_K=-(\bv_0,\nabla
\phi)_K+\langle \bv_b\cdot \bn,
\phi\rangle_{\partial K},\qquad \forall\phi\in P_r(K),
\end{equation}
where $\bn$ is the unit outward normal direction on $\partial K$.
\end{definition}

\begin{definition} \cite{wysec2, wy1302}(weak gradient)
The weak gradient of $\bv\in V(K)$, denoted by $\nabla_w \bv$, is a
matrix-valued bounded linear functional in the Sobolev space
$[H^1(K)]^{d\times d}$, so that its action on any $\varphi\in
[H^1(K)]^{d\times d}$ is given by
\begin{equation*}
\langle\nabla_w  \bv,\varphi\rangle_K:=-(\bv_0,\nabla \cdot
\varphi)_K+\langle \bv_b,  \varphi\bn \rangle_{\partial K},
\end{equation*}
where $\bn$ is the unit outward normal direction on $\partial K$.
Here the divergence $\nabla\cdot\varphi$ is applied to each row of
$\varphi$, and $\varphi\bn$ is the usual matrix-vector
multiplication.
\end{definition}

\begin{definition} \cite{wysec2, wy1302}(discrete weak gradient)
The discrete weak gradient of $\bv\in V(K)$, denoted by
$\nabla_{w,r,K} \bv$, is the unique matrix-valued polynomial in
$[P_r(K)]^{d\times d}$, satisfying
\begin{equation}\label{2.5}
(\nabla_{w,r,K} \bv,\varphi)_K=-(\bv_0,\nabla \cdot
\varphi)_K+\langle \bv_b,  \varphi\bn
\rangle_{\partial K},\quad \forall \varphi\in[P_r(K)]^{d\times d},
\end{equation}
where $\bn$ is the unit outward normal direction on $\partial K$.
\end{definition}

Using the weak gradient, we may define the weak strain tensor as
follows
\begin{equation}\label{weak-strain}
\varepsilon_w(\bu)=\frac{1}{2}(\nabla_w\bu+\nabla_w\bu^T).
\end{equation}
Analogously, the weak stress tensor can be defined by
\begin{equation}\label{weak-stress}
\sigma_w(\bu)=2\mu\varepsilon_w(\bu) + \lambda
(\nabla_w\cdot\bu)\textbf{I}.
\end{equation}

\section{Numerical Algorithms}\label{Section:fem-algorithms}
Let ${\cal T}_h$ be a finite element partition of the domain
$\Omega\subset \mathbb{R}^d$ consisting of polygons in 2D or
polyhedra in 3D which are shape regular interpreted as in
\cite{wy1202}. For each $T\in \T_h$, denote by $h_T$ the diameter of
$T$. The mesh size of $\T_h$ is defined as $h=\max_{T\in \T_h} h_T$.
Let $T\in {\cal T}_h$ be an element with $e$ as an edge in 2D or a
face in 3D. Denote by ${\cal E}_h$ the set of all edges or faces in
${\cal T}_h$ and ${\cal E}^0_h={\cal E}_h\setminus {\partial\Omega}$
the set of all interior edges or faces in ${\cal T}_h$.

On each element $T\in\T_h$, denote by ${\textsf{RM}}(T)$ the space
of rigid motions on $T$ given by
$$
\textsf{RM}(T)=\{{\bf{a}}+\eta{\bf{x}}:\; {\bf{a}}\in \mathbb{R}^d,
\ \eta \in so(d)\},
$$
where ${\bf{x}}$ is the position vector on $T$ and $so(d)$ is the
space of skew-symmetric $d\times d$ matrices. The trace of the rigid
motion on each edge $e\subset T$ forms a finite dimensional space
denoted by
$$
P_{RM}(e)=\{\bv\in [L^2(e)]^d: \ \bv=\tilde\bv|_e \mbox{ for some }
\tilde{\bv}\in {\sf{RM}}(T),\ e\subset \pT\}.
$$

For any positive integer $k\ge 1$ and element $T\in\T_h$, we
introduce a local weak finite element space as follows
\begin{equation}\label{EQ:WFE-local}
 V(k,T)= \left\{ \bv=\{\bv_0,\bv_b\}:\ \bv_0\in [P_k(T)]^d,\
 \bv_b\in V_{k-1}(e)
 \right\},
\end{equation}
where $V_{k-1}(e)\equiv [P_{k-1}(e)]^d+P_{RM}(e)$. Since
$P_{RM}(e)\subset P_1(e)$, then the boundary component $V_{k-1}(e)$
is given by $[P_{k-1}(e)]^d$ for $k>1$ and $P_{RM}(e)$ for $k=1$.

The global weak finite element space $V_h$ is given by patching the
local spaces $V(k,T)$ with a single-valued component $\bv_b$ on each
interior element interface. All the weak finite element functions
$\bv\in V_h$ with vanishing boundary value $\bv_b=0$ on $\Gamma$
form a subspace of $V_h$, which is denoted as
\begin{equation}\label{EQ:WFE-global}
V^0_h=\{\bv =\{\bv _0,\bv_b\}\in V_h:\ \bv_b=0 ~\text{on}~ \Gamma\}.
\end{equation}

For each $\bv\in V_h$, the discrete weak divergence
$\nabla_{w,k-1}\cdot\bv$ and the discrete weak gradient
$\nabla_{w,k-1}\bv$ are computed by using (\ref{2.2}) and
(\ref{2.5}) on each element $T\in {\cal T}_h$; i.e.,
\begin{align*}
(\nabla_{w,k-1}\cdot \bv)|_T=&\nabla_{w,k-1,T}\cdot
(\bv|_T), \qquad \bv\in V_h,\\
(\nabla_{w,k-1}  \bv)|_T=&\nabla_{w,k-1,T} (\bv|_T), \qquad\quad
\bv\in V_h.
\end{align*}
For simplicity of notation, we shall drop the subscript $k-1$ from
the notation $\nabla_{w,k-1}$ and $\nabla_{w,k-1}\cdot $ in the rest
of the paper. For each edge or face $e\in\mathcal{E}_h$, denote by
$Q_b$ the $L^2$ projection operator onto the space $V_{k-1}(e)$;
i.e., the polynomial space $[P_{k-1}(e)]^2$ for $k>1$ or the rigid
motion space $P_{RM}(e)$ for $k=1$.

Next, we introduce two bilinear forms
\begin{eqnarray}\label{EQ:stabilizer}
s(\bw,\bv)&=&\sum_{T\in\mathcal{T}_h}h_T^{-1}\langle Q_b\bw_0-\bw_b,Q_b\bv_0-\bv_b\rangle_{\partial T},
\\
a_s(\bw,\bv)&=&\sum_{T\in\mathcal{T}_h}2(\mu\varepsilon_w(\bw),\varepsilon_w(\bv))_T
+\sum_{T\in\mathcal{T}_h}(\lambda\nabla_w\cdot\bw,\nabla_w\cdot\bv)_T+s(\bw,\bv),\label{EQ:bilinearForm}
\end{eqnarray}
where $\varepsilon_w(\bw)$ and $\nabla_w\cdot\bw$ are computed by
using the discrete weak gradient and weak divergence operators.

\begin{algorithm}\label{algo-primal}
For a numerical solution of the elasticity problem
(\ref{model-weak}), find $\bu_h=\{\bu_0, \bu_b\}\in V_h$ with $\bu_b
= Q_b \hat{\bu}$ on $ \Gamma$ such that
\begin{eqnarray}\label{WGA_primal}
a_s(\bu_h,\bv)=(\bbf,\bv_0), \qquad\forall \bv=\{\bv_0, \bv_b\}\in
V_h^0.
\end{eqnarray}
\end{algorithm}

The rest of this section is devoted to a study of (\ref{WGA_primal})
on the solution existence and uniqueness.

\begin{theorem}
There exists one and only one solution to the weak Galerkin finite
element scheme (\ref{WGA_primal}).
\end{theorem}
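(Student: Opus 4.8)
The plan is to reduce the question of existence and uniqueness for the linear system \eqref{WGA_primal} to the verification that the bilinear form $a_s(\cdot,\cdot)$ is positive definite on the finite-dimensional space $V_h^0$. Since \eqref{WGA_primal} is a square linear system (the number of unknowns equals the number of test functions, both indexed by $V_h^0$ once the boundary data $\bu_b=Q_b\hat\bu$ is fixed), it suffices to show that the only solution of the homogeneous problem $a_s(\bu_h,\bv)=0$ for all $\bv\in V_h^0$ is $\bu_h=0$. Taking $\bv=\bu_h\in V_h^0$ gives
\begin{equation*}
2\sum_{T\in\mathcal{T}_h}\mu\|\varepsilon_w(\bu_h)\|_T^2 + \sum_{T\in\mathcal{T}_h}\lambda\|\nabla_w\cdot\bu_h\|_T^2 + s(\bu_h,\bu_h)=0,
\end{equation*}
and since $\mu>0$, $\lambda\ge 0$, and every term is nonnegative, we conclude $\varepsilon_w(\bu_h)=0$ on each $T$, $\nabla_w\cdot\bu_h=0$ on each $T$, and $Q_b(\bu_0)|_{\partial T}=\bu_b|_{\partial T}$ on each $T$.

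Next I would exploit these three conditions to force $\bu_h=0$. From $\varepsilon_w(\bu_h)=0$ and the definition \eqref{2.5} of the discrete weak gradient, for each $T$ the polynomial $\bu_0\in[P_k(T)]^d$ satisfies, for every symmetric $\varphi\in[P_{k-1}(T)]^{d\times d}$,
\begin{equation*}
(\nabla\bu_0,\varphi)_T = \langle\bu_0-\bu_b,\varphi\bn\rangle_{\partial T},
\end{equation*}
after integration by parts; combined with the stabilizer condition $\bu_b=Q_b(\bu_0|_{\partial T})$ and the fact that $\varphi\bn$ is a polynomial of degree $k-1$ on each face (so the projection $Q_b$ acts transparently against it), the right-hand side vanishes, hence $\varepsilon(\bu_0)=0$ on $T$. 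Therefore $\bu_0|_T\in\textsf{RM}(T)$ is a rigid motion on each element. The stabilizer condition then says $\bu_b$ agrees with the trace of this elementwise rigid motion on $\partial T$; since $\bu_b$ is single-valued across interior faces, the elementwise rigid motions must match on shared faces, and a rigid motion is determined by its trace on a single face (of positive measure), so $\bu_0$ is a single global rigid motion on all of $\Omega$. Finally the boundary condition $\bu_b=0$ on $\Gamma$ forces this rigid motion to vanish, so $\bu_h=0$.

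The main obstacle is the careful handling of the $k=1$ case, where the boundary component $\bv_b$ lives only in $P_{RM}(e)$ rather than the full $[P_0(e)]^d$: I must check that the argument "$\varepsilon_w(\bu_h)=0$ implies $\varepsilon(\bu_0)=0$" still goes through, i.e., that the test space $[P_{k-1}(T)]^{d\times d}$ for the weak gradient is rich enough and that $Q_b$ onto $P_{RM}(e)$ does not destroy the cancellation — here one uses that the trace of a rigid motion on $e$ is exactly an element of $P_{RM}(e)$, so that $\langle\bu_0-Q_b\bu_0,\varphi\bn\rangle_{\partial T}=0$ precisely because $\varphi\bn|_e$ pairs trivially against the orthogonal complement. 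A secondary technical point is confirming that in the chain of projections the term $\langle\bu_b,\varphi\bn\rangle_{\partial T}$ matches $\langle Q_b\bu_0,\varphi\bn\rangle_{\partial T}$; this is immediate once one notes $\varphi\bn|_e\in V_{k-1}(e)$ for the relevant $\varphi$. Everything else is routine linear algebra and the standard "rigid motion pasting" argument familiar from Korn-type settings.
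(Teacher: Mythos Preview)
Your proposal is correct and follows essentially the same line as the paper's proof: reduce to the homogeneous system, set $\bv=\bu_h$ to obtain $\varepsilon_w(\bu_h)=0$ and $Q_b\bu_0=\bu_b$, use the weak-gradient identity to upgrade this to $\varepsilon(\bu_0)=0$ on each element, and conclude that the resulting elementwise rigid motion is globally continuous with vanishing boundary trace. The only cosmetic differences are that the paper finishes by invoking the second Korn inequality \eqref{EQ:KornIneq:2nd-002} rather than your direct pasting argument, and your $k=1$ concern dissolves exactly as you indicate at the end: for $\varphi\in[P_0(T)]^{d\times d}$ the vector $\varphi\bn|_e$ is constant and hence lies in $P_{RM}(e)=V_0(e)$, so $\langle\bu_0-Q_b\bu_0,\varphi\bn\rangle_{\partial T}=0$ without any appeal to rigid-motion traces.
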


\begin{proof}
Since the number of equations equals the number of unknowns in
(\ref{WGA_primal}), it suffices to prove the solution uniqueness. To
this end, let $\bu_h^{(j)}=\{\bu_0^{(j)}, \bu_b^{(j)}\}\in V_h,\
j=1,2$, be two solutions of (\ref{WGA_primal}). It follows that
$\bu_b^{(j)} = Q_b \hat{\bu}$ on $ \Gamma$ and
\begin{eqnarray*}
a_s(\bu_h^{(j)},\bv)=(\bbf,\bv_0), \qquad\forall \bv=\{\bv_0,
\bv_b\}\in V_h^0, \ j=1,2.
\end{eqnarray*}
The difference of the two solutions, $\bw=\bu_h^{(1)}-\bu_h^{(2)}$,
satisfies $\bw\in V_h^0$ and
\begin{eqnarray}\label{Uniqueness:001}
a_s(\bw,\bv)=0, \qquad\forall \bv=\{\bv_0, \bv_b\}\in V_h^0.
\end{eqnarray}
By letting $\bv=\bw$ in (\ref{Uniqueness:001}) we obtain
$$
a_s(\bw,\bw) = 0.
$$
Using the definition of $a_s(\cdot,\cdot)$ we arrive at
\begin{eqnarray*}
\sum_{T\in\mathcal{T}_h}2(\mu\varepsilon_w(\bw),\varepsilon_w(\bw))_T+\sum_{T\in\mathcal{T}_h}(\lambda\nabla_w\cdot\bw,\nabla_w\cdot\bw)_T
\\
+\sum_{T\in\mathcal{T}_h}h_T^{-1}\langle
Q_b\bw_0-\bw_b,Q_b\bw_0-\bw_b\rangle_{\partial T}=0,
\end{eqnarray*}
which implies that
\begin{eqnarray}
\varepsilon_w(\bw)&=&0, \qquad \text{in}\  T,\label{EQ:July12:000}\\
Q_b\bw_0-\bw_b&=&0,  \qquad \text{on}\  \pT.\label{EQ:July12:001}
\end{eqnarray}
From the definition of the weak gradient, we have
\begin{eqnarray*}
(\nabla_w\bw,\tau)_T&=&(\nabla\bw_{0},\tau)_T-\langle\bw_{0}-\bw_{b},\tau\bn\rangle_\pT
\\
&=&(\nabla\bw_{0},\tau)_T-\langle
Q_b\bw_{0}-\bw_{b},\tau\bn\rangle_\pT
\end{eqnarray*}
for all $\tau\in [P_{k-1}(T)]^{d\times d}, k\geq1$. It follows from
(\ref{EQ:July12:001}) that $\nabla\bw_0=\nabla_w\bw$ on each element
$T$. Thus, with the help of (\ref{EQ:July12:000}),
$$
\varepsilon(\bw_0) = \varepsilon_w(\bw) = 0,
$$
which leads to $\bw_0\in {\sf RM}(T)\subset [P_1(T)]^d$. It follows
that $\bw_0|_{e}=Q_b\bw_0=\bw_b$, and hence $\bw_0$ is a continuous
function in $\Omega$ with vanishing boundary value on $\Gamma$. From
the second Korn's inequality (\ref{EQ:KornIneq:2nd-002}), we obtain
$\bw_0\equiv 0$ in $\Omega$, and hence $\bw_b \equiv 0$ from
(\ref{EQ:July12:001}). This shows that $\bu_h^{(1)} \equiv
\bu_h^{(2)}$, and hence the solution uniqueness and existence. We
remark that the result holds true for any $\lambda\ge 0$.
\end{proof}

\section{An Equivalent Mixed Formulation} A strong form of the mixed formulation
(\ref{var1})-(\ref{var2}) reads as follows: Find $\textbf{u}$ and
$p$ satisfying $\bu=\widehat\bu$ on $\Gamma$, the compatibility
condition $\int_\Omega \lambda^{-1} p dx = \int_\Gamma
\widehat\bu\cdot\bn ds$, and the following generalized Stokes
equations
\begin{equation}\label{model}
\begin{split}
-\nabla\cdot (2\mu \varepsilon(\bu))+\nabla p &=\bbf,\qquad \qquad \text{in}\ \Omega,\\
\nabla \cdot \bu & = \lambda^{-1} p,\qquad \, \text{in}\ \Omega.
\end{split}
\end{equation}
We assume that the generalized Stokes problem (\ref{model}) has the
$H^{1+s}(\Omega)\times H^s(\Omega)$-regularity for some $s\in
(\frac12, 1]$ in the sense that, for smooth data $\bbf$ and
$\widehat\bu$, the solution $\bu$ and $p$ of (\ref{model}) satisfy
$\bu\in [H^{1+s}(\Omega)]^d$, $p\in H^s(\Omega)$, and the following
a priori estimate
\begin{equation}\label{Regularity}
\|\bu\|_{1+s} + \|p\|_s \leq C (\|\bbf\|_{s-1} + \|\widehat
\bu\|_{s+\frac12,\Gamma})
\end{equation}
for some constant $C$ independent of the parameter $\lambda$. The
regularity estimate (\ref{Regularity}) can be found in
\cite{Grisvard, Dauge} for convex polygonal domain with $s=1$ when
$\lambda = \infty$. For large values of $\lambda$, one may
heuristically apply the result for the Stokes (by viewing
$\lambda^{-1} p$ as a given function) to obtain
\begin{equation*}
\|\bu\|_{1+s} + \|p\|_s \leq C (\|\bbf\|_{s-1} + \|\widehat
\bu\|_{s+\frac12,\Gamma} + \lambda^{-1} \|p\|_{s}),
\end{equation*}
which implies (\ref{Regularity}) when $\lambda$ is sufficiently
large.

The idea of weak Galerkin can be applied to the mixed formulation
(\ref{var1})-(\ref{var2}) for the linear elasticity problem
(\ref{primal_model})-(\ref{bc1}). This application requires an
additional finite element space that approximates the auxiliary
variable $p$. More precisely, we introduce
$$
W_h=\{q:  \ q|_T\in P_{k-1}(T), \ T\in\T_h\},\qquad W_h^0=W_h\cap
L_0^2(\Omega)
$$
and the following bilinear forms
\begin{eqnarray*}
a(\bw,\bv) & = & 2( \mu\varepsilon_w(\bw), \varepsilon_w(\bv))_h+s(\bw,\bv),\\
b(\bv,q) & = & (\nabla_{w}\cdot \bv, q)_h,\\
d(p,q) & = & \lambda^{-1} (p,q),
\end{eqnarray*}
where $\bw, \bv\in V_h$, $p, q\in W_h$, and
\begin{align*}
(\mu\varepsilon_w(\bw), \varepsilon_w(\bv))_h = &\sum_{T\in {\cal
T}_h}(
\mu\varepsilon_w(\bw), \varepsilon_w(\bv))_T,\\
 (\nabla_{w}\cdot \bv, q)_h= &\sum_{T\in {\cal T}_h}
 (\nabla_{w}\cdot \bv, q)_T.
\end{align*}

\begin{algorithm}\label{algo2}
For a numerical solution of the linear elasticity problem
(\ref{var1})-(\ref{var2}), find $\bu_h=\{\bu_0, \bu_b\}\in V_h$ and
$p_h\in W_h$ satisfying $\bu_b = Q_b \hat{\bu}$ on $ \Gamma$, the
compatibility condition $(\lambda^{-1}p_h,1) = \int_\Gamma
\widehat\bu\cdot\bn ds$, and the following equations
\begin{eqnarray}\label{3.3}
a(\bu_h,\bv)+b(\bv,p_h) &=& (\bbf,\bv_0), \qquad\forall \bv\in V_h^0,\\
 b(\bu_h,q)-d(p_h,q) &=& 0,\qquad\qquad
\forall q\in W_h^0.\label{3.4}
\end{eqnarray}
\end{algorithm}

\begin{lemma}
The weak Galerkin Algorithms \ref{algo-primal} and \ref{algo2} are
equivalent in the sense that the solution $\bu_h$ from
(\ref{WGA_primal}) and (\ref{3.3})-(\ref{3.4}) are identical to each
other.
\end{lemma}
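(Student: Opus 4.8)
The plan is to construct an explicit correspondence between solutions of Algorithms~\ref{algo-primal} and~\ref{algo2}, the link being the identity $p_h=\lambda\,\nabla_w\cdot\bu_h$. Note first that, by the definition (\ref{2.2}), $(\nabla_w\cdot\bv)|_T\in P_{k-1}(T)$ for every $\bv\in V_h$, so $\lambda\,\nabla_w\cdot\bu_h\in W_h$ whenever $\bu_h\in V_h$; moreover the broken product $(\nabla_w\cdot\bv,q)_h$ agrees with the global $L^2$-inner product for $q\in W_h$. I would also record the boundary-flux identity
$$
\int_\Omega \nabla_w\cdot\bv\,dx=\int_\Gamma \bv_b\cdot\bn\,ds,\qquad \forall\,\bv\in V_h,
$$
obtained by taking $\phi\equiv 1$ in (\ref{2.2}) on each $T$, which gives $(\nabla_w\cdot\bv,1)_T=\langle\bv_b\cdot\bn,1\rangle_{\partial T}$, and then summing over $T\in\T_h$: the interior-face contributions cancel because $\bv_b$ is single-valued while the two outward normals are opposite. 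Taking $\bv_b=Q_b\widehat\bu$ on $\Gamma$ and using that each face $e$ is flat, so that $\bn|_e$ is a constant vector lying in $V_{k-1}(e)$, the projection $Q_b$ is transparent against $\bn$, whence $\int_\Omega\nabla_w\cdot\bu_h\,dx=\int_\Gamma\widehat\bu\cdot\bn\,ds$ for every admissible $\bu_h$.

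Next I would argue the direction from the mixed to the primal scheme. Suppose $(\bu_h,p_h)$ solves (\ref{3.3})--(\ref{3.4}). Equation (\ref{3.4}) reads $(\lambda\,\nabla_w\cdot\bu_h-p_h,q)=0$ for all $q\in W_h^0$; since the $L^2$-orthogonal complement of $W_h^0$ in $W_h$ consists exactly of the constants, $\lambda\,\nabla_w\cdot\bu_h-p_h\equiv c$ for some constant $c$. Integrating over $\Omega$ and combining the two compatibility conditions with the boundary-flux identity, namely $\int_\Omega\lambda^{-1}p_h\,dx=\int_\Gamma\widehat\bu\cdot\bn\,ds=\int_\Omega\nabla_w\cdot\bu_h\,dx$, yields $c\,|\Omega|=0$, so that $p_h=\lambda\,\nabla_w\cdot\bu_h$. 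Substituting this into (\ref{3.3}) and using $b(\bv,p_h)=(\lambda\,\nabla_w\cdot\bu_h,\nabla_w\cdot\bv)_h$ turns (\ref{3.3}) into $a_s(\bu_h,\bv)=(\bbf,\bv_0)$ for all $\bv\in V_h^0$, i.e., $\bu_h$ solves (\ref{WGA_primal}).

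Conversely, let $\bu_h$ be the solution of (\ref{WGA_primal}) and set $p_h:=\lambda\,\nabla_w\cdot\bu_h\in W_h$. The compatibility condition $(\lambda^{-1}p_h,1)=\int_\Gamma\widehat\bu\cdot\bn\,ds$ is precisely the boundary-flux identity; equation (\ref{3.3}) is just a rewriting of (\ref{WGA_primal}) via $b(\bv,p_h)=(\lambda\,\nabla_w\cdot\bu_h,\nabla_w\cdot\bv)_h$; and (\ref{3.4}) holds because $b(\bu_h,q)-d(p_h,q)=(\nabla_w\cdot\bu_h,q)-(\nabla_w\cdot\bu_h,q)=0$ for all $q$. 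Hence $(\bu_h,p_h)$ solves (\ref{3.3})--(\ref{3.4}). Combining the two directions with the uniqueness of the solution of (\ref{WGA_primal}) established in Section~\ref{Section:fem-algorithms}, the displacement component produced by the two algorithms must coincide. The one genuinely delicate step is pinning down the constant $c=0$, which is what forces the joint use of both compatibility conditions and the boundary-flux identity; everything else is bookkeeping with the definitions of the weak divergence and weak gradient.
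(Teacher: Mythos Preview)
Your proof is correct and follows the same core idea as the paper: identify $p_h=\lambda\,\nabla_w\cdot\bu_h$ and substitute into the primal bilinear form. The paper is in fact slightly more cavalier at one step: it passes directly from (\ref{3.4}), which is stated only for test functions $q\in W_h^0$, to the elementwise identity $(\nabla_w\cdot\tilde\bu_h,q)_T=\lambda^{-1}(\tilde p_h,q)_T$ for all $q\in P_{k-1}(T)$, implicitly assuming the constant defect vanishes. Your explicit use of the boundary-flux identity together with the two compatibility conditions to force $c=0$ fills exactly that gap, so your argument is a little more complete than the paper's own.
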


\begin{proof}
Assume that $\tilde\bu_h$ and $\tilde p_h$ solves
(\ref{3.3})-(\ref{3.4}). Note that the equation (\ref{3.4}) can be
rewritten as
$$
(\nabla_{w}\cdot \tilde\bu_h, q)_T-\lambda^{-1}(\tilde p_h,q)_T=0,
\qquad\forall q\in P_{k-1}(T).
$$
Since $\nabla_{w}\cdot \tilde\bu_h\in P_{k-1}(T)$, then $\tilde p_h$
can be solved from the above equation as
\begin{equation}\label{EQ:July13:001}
\tilde p_h = \lambda \nabla_{w}\cdot \tilde\bu_h.
\end{equation}
Substituting (\ref{EQ:July13:001}) into (\ref{3.3}) yields
$$
a(\tilde\bu_h,\bv)+\lambda b(\bv,\nabla_{w}\cdot \tilde\bu_h) =
(\bbf,\bv_0), \qquad\forall \bv\in V_h^0.
$$
Note that
$$
a(\bw,\bv)+\lambda b(\bv,\nabla_{w}\cdot\bw) = a_s(\bw,\bv),\qquad
\forall \bw,\ \bv\in V_h.
$$
Thus, we have
$$
a_s(\tilde\bu_h, \bv) =(\bbf,\bv_0), \qquad\forall \bv\in V_h^0,
$$
which is the same as (\ref{WGA_primal}). It then follows from the
solution uniqueness that $\tilde\bu_h$ is identical with the
numerical solution arising from the weak Galerkin Algorithm
\ref{algo-primal}.

A similar argument can be applied to show that if $\bu_h$ solves
(\ref{WGA_primal}), then the pair $(\bu_h;
\lambda\nabla_w\cdot\bu_h)$ is a solution of
(\ref{3.3})-(\ref{3.4}). Details are left to interested readers as
an exercise.
\end{proof}

The reformulation (\ref{var1})-(\ref{var2}) of the elasticity
problem as a generalized Stokes system with nonzero divergence
constraint often leads to numerical approximations which are
locking-free as $\lambda\to\infty$. Due to the equivalence between
the primal formulation (\ref{WGA_primal}) and the mixed formulation
(\ref{3.3})-(\ref{3.4}) in the WG finite element method, the WG
scheme in the primal formulation (\ref{WGA_primal}) is locking-free
if the corresponding WG finite element method for
(\ref{var1})-(\ref{var2}) can be proved to be stable and accurate in
terms of the parameter $\lambda$. The rest of the paper is devoted
to a stability analysis for the mixed weak Galerkin finite element
scheme (\ref{3.3})-(\ref{3.4}), which in turn implies a locking-free
convergence for the linear elasticity problem in the displacement
formulation (\ref{WGA_primal}).

\section{Stability Conditions} In the weak finite element space $V_h$, we
introduce the following semi-norm
\begin{equation}\label{a6.1}
\3bar\bv\3bar =\Big(\sum_{T\in {\cal T}_h}\|\varepsilon(
\bv_0)\|_T^2+ h_T^{-1}\| Q_b\bv_0-\bv_b\|^2_{\partial
T}\Big)^{\frac{1}{2}},\quad \bv\in V_h.
\end{equation}

\begin{lemma}\label{lemmanorm}
The semi-norm  $\3bar\cdot\3bar$, as defined by (\ref{a6.1}), is
truly a norm in the linear space $V^0_h$.
\end{lemma}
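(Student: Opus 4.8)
The plan is to verify the three defining properties of a norm — nonnegativity, homogeneity, and the triangle inequality are immediate from the definition \eqref{a6.1} since $\3bar\cdot\3bar$ is built from $L^2$-type seminorms — so the only real content is definiteness: showing that $\3bar\bv\3bar = 0$ with $\bv\in V_h^0$ forces $\bv = 0$ (i.e.\ $\bv_0 = 0$ and $\bv_b = 0$). First I would observe that $\3bar\bv\3bar = 0$ gives, on every element $T\in\T_h$, the two conditions $\varepsilon(\bv_0) = 0$ in $T$ and $Q_b\bv_0 - \bv_b = 0$ on $\pT$. The first condition means $\bv_0|_T$ is an infinitesimal rigid motion, so $\bv_0|_T \in \textsf{RM}(T)\subset [P_1(T)]^d$ on each element.

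Next I would exploit the consistency identity for the weak gradient that already appears in the uniqueness proof of the earlier theorem: for $\tau\in[P_{k-1}(T)]^{d\times d}$,
\begin{equation*}
(\nabla_w\bv,\tau)_T = (\nabla\bv_0,\tau)_T - \langle Q_b\bv_0 - \bv_b, \tau\bn\rangle_\pT,
\end{equation*}
so the condition $Q_b\bv_0 - \bv_b = 0$ on $\pT$ yields $\nabla_w\bv = \nabla\bv_0$ on each $T$. Since $\bv_0|_T\in \textsf{RM}(T)$ has a constant trace jump structure, and $Q_b$ reproduces rigid motions (as $P_{RM}(e)\subset V_{k-1}(e)$), the relation $Q_b\bv_0 = \bv_b$ on $\pT$ shows that $\bv_b$ is exactly the trace of $\bv_0$ on each interior interface; because $\bv_b$ is single-valued across interfaces, the piecewise-rigid-motion function $\bv_0$ is in fact globally continuous on $\Omega$, hence $\bv_0 \in [H^1(\Omega)]^d$ with $\varepsilon(\bv_0) = 0$ in $\Omega$, and $\bv_0 = 0$ on $\Gamma$ since $\bv\in V_h^0$ forces $\bv_b = 0$ there.

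Finally I would invoke the second Korn's inequality (the estimate \eqref{EQ:KornIneq:2nd-002} referenced in the earlier uniqueness argument): a function in $[H^1(\Omega)]^d$ with vanishing symmetric gradient and vanishing trace on $\Gamma$ must be identically zero, so $\bv_0\equiv 0$ in $\Omega$, and then $\bv_b = Q_b\bv_0 = 0$ on every $\pT$ as well. Hence $\bv = 0$, proving definiteness and completing the proof that $\3bar\cdot\3bar$ is a norm on $V_h^0$. The main obstacle — really the only subtle point — is the step that upgrades piecewise rigid motions with matching interface traces into a genuinely $H^1$-conforming function so that the global Korn inequality applies; this is essentially the same mechanism used in the preceding uniqueness theorem, so I would lean on that argument, but care is needed to confirm that the $L^2$-projection $Q_b$ onto $V_{k-1}(e)$ indeed reproduces the trace of the rigid motion $\bv_0|_T$ (which holds because $P_{RM}(e)\subset V_{k-1}(e)$ for every $k\ge 1$).
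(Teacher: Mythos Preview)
Your proposal is correct and follows essentially the same route as the paper: from $\3bar\bv\3bar=0$ deduce $\varepsilon(\bv_0)=0$ and $Q_b\bv_0=\bv_b$ on each $T$, hence $\bv_0|_T\in\textsf{RM}(T)$, use $P_{RM}(e)\subset V_{k-1}(e)$ to get $\bv_0|_e=Q_b\bv_0=\bv_b$, infer global continuity and vanishing boundary trace, and conclude via the second Korn inequality \eqref{EQ:KornIneq:2nd-002}. The only difference is that your detour through the weak-gradient identity $(\nabla_w\bv,\tau)_T=(\nabla\bv_0,\tau)_T-\langle Q_b\bv_0-\bv_b,\tau\bn\rangle_\pT$ is unnecessary here---the paper goes directly from $\bv_0|_T\in\textsf{RM}(T)$ to $\bv_0|_e=Q_b\bv_0$ without it.
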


\begin{proof}
We shall only verify the positivity property for $\3bar \cdot
\3bar$. To this end, assume that $\3bar\bv\3bar=0$ for some
 $\bv\in V^0_h$.  It follows that $\varepsilon(\bv_0)=0$ on each element $T\in
{\cal T}_h$ and $Q_b\bv_0=\bv_b$ on $\partial T$. Thus, $\bv_0\in
\textsf{RM}(T)$ satisfies $\bv_0|_e = Q_b(\bv_0|_e)=\bv_b$, which
implies the continuity of $\bv_0$ in the whole domain $\Omega$. The
boundary condition $\bv_b=\textbf{0}$ on $\Gamma$ implies that
$\bv_0=\textbf{0}$ on $\Gamma$. From the second Korn's inequality
(\ref{EQ:KornIneq:2nd-002}), we obtain $\bv_0 =\textbf{0}$ in
$\Omega$. The fact that $Q_b\bv_0=\bv_b$ on $\partial T$ gives
$\bv_b=\textbf{0}$ on $\partial T$. Thus, $\bv \equiv\textbf{0}$ in
$\Omega$, which completes the proof of the lemma.
\end{proof}

\begin{lemma}\label{lemmacoerv}
There exist positive constants $\alpha_1$ and $\alpha_2$ such that
\begin{equation}\label{EQ:coercivity}
\alpha_1 \3bar \bv \3bar^2 \leq a(\bv,\bv)\leq \alpha_2\3bar \bv
\3bar^2, \qquad \forall \ \bv\in V_h^0.
\end{equation}
\end{lemma}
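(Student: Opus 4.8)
The plan is to establish the norm equivalence $\alpha_1\3bar\bv\3bar^2\le a(\bv,\bv)\le\alpha_2\3bar\bv\3bar^2$ by comparing $a(\bv,\bv)=2(\mu\varepsilon_w(\bv),\varepsilon_w(\bv))_h+s(\bv,\bv)$ with the two pieces of $\3bar\bv\3bar^2$, namely $\sum_T\|\varepsilon(\bv_0)\|_T^2$ and $\sum_T h_T^{-1}\|Q_b\bv_0-\bv_b\|_{\partial T}^2=s(\bv,\bv)$. The stabilizer term appears verbatim on both sides, so the whole problem reduces to controlling $\sum_T\|\varepsilon_w(\bv)\|_T^2$ against $\sum_T\|\varepsilon(\bv_0)\|_T^2+s(\bv,\bv)$ from above and below, element by element. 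Since $\mu$ is a fixed positive constant, the constants $\alpha_1,\alpha_2$ will absorb $\mu$ and the shape-regularity constants but must be independent of $\lambda$ (which is automatic here, as $a(\cdot,\cdot)$ contains no $\lambda$) and of $h$.

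The first (and easier) direction is the upper bound. On each $T$, using the definition (\ref{2.5}) of the discrete weak gradient and integration by parts, one writes $(\nabla_w\bv,\tau)_T=(\nabla\bv_0,\tau)_T-\langle\bv_0-\bv_b,\tau\bn\rangle_{\partial T}$ for all $\tau\in[P_{k-1}(T)]^{d\times d}$; replacing $\bv_0$ by $Q_b\bv_0$ in the boundary term (legitimate since $\tau\bn|_e\in[P_{k-1}(e)]^d\subset V_{k-1}(e)$) gives $(\nabla_w\bv,\tau)_T=(\nabla\bv_0,\tau)_T-\langle Q_b\bv_0-\bv_b,\tau\bn\rangle_{\partial T}$. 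Taking $\tau=\nabla_w\bv$, bounding $\|\nabla\bv_0\|_T\le\|\varepsilon(\bv_0)\|_T+\dots$ (or more directly estimating via Cauchy--Schwarz), and invoking the trace/inverse inequality $\|\tau\bn\|_{\partial T}\le Ch_T^{-1/2}\|\tau\|_T$ valid on shape-regular polygons, yields $\|\nabla_w\bv\|_T\le C(\|\nabla\bv_0\|_T+h_T^{-1/2}\|Q_b\bv_0-\bv_b\|_{\partial T})$; symmetrizing gives the same bound for $\varepsilon_w(\bv)$. Here I would also need a first Korn-type inequality on the piecewise-polynomial velocity, or simply note $\|\nabla\bv_0\|_T$ can be controlled by $\|\varepsilon(\bv_0)\|_T$ plus a lower-order term handled by the stabilizer — this is exactly the role of the Korn's inequality advertised in Lemma \ref{korninqua}, so I would cite (\ref{EQ:July29:815}) to pass from $\sum_T\|\nabla\bv_0\|_T^2$ to $\sum_T\|\varepsilon(\bv_0)\|_T^2+s(\bv,\bv)$ globally. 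Squaring and summing then gives $a(\bv,\bv)\le\alpha_2\3bar\bv\3bar^2$.

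The lower bound is the main obstacle. I need $\sum_T\|\varepsilon(\bv_0)\|_T^2\le C(\sum_T\|\varepsilon_w(\bv)\|_T^2+s(\bv,\bv))$, i.e., to recover the true strain from the weak strain up to the stabilizer. The natural route is: on each $T$, choose $\tau=\varepsilon(\bv_0)\in[P_{k-1}(T)]^{d\times d}$ (note $\bv_0\in[P_k(T)]^d$ so $\varepsilon(\bv_0)\in[P_{k-1}(T)]^{d\times d}$, which is exactly why the index $k-1$ is used) as a test matrix in the identity above. This gives $(\varepsilon_w(\bv),\varepsilon(\bv_0))_T=\|\varepsilon(\bv_0)\|_T^2-\langle Q_b\bv_0-\bv_b,\varepsilon(\bv_0)\bn\rangle_{\partial T}$, hence $\|\varepsilon(\bv_0)\|_T^2\le\|\varepsilon_w(\bv)\|_T\|\varepsilon(\bv_0)\|_T+\|Q_b\bv_0-\bv_b\|_{\partial T}\|\varepsilon(\bv_0)\bn\|_{\partial T}$. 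Applying the trace-inverse inequality to the last factor, $\|\varepsilon(\bv_0)\bn\|_{\partial T}\le Ch_T^{-1/2}\|\varepsilon(\bv_0)\|_T$, then Young's inequality to absorb the resulting $\|\varepsilon(\bv_0)\|_T^2$ on the right, yields $\|\varepsilon(\bv_0)\|_T^2\le C(\|\varepsilon_w(\bv)\|_T^2+h_T^{-1}\|Q_b\bv_0-\bv_b\|_{\partial T}^2)$. Summing over $T$ gives $\sum_T\|\varepsilon(\bv_0)\|_T^2\le C(\sum_T\|\varepsilon_w(\bv)\|_T^2+s(\bv,\bv))$, and adding $s(\bv,\bv)$ to both sides produces $\3bar\bv\3bar^2\le C\,a(\bv,\bv)$, i.e., $\alpha_1\3bar\bv\3bar^2\le a(\bv,\bv)$ with $\alpha_1=(C/(2\mu))^{-1}$ after tracking the $\mu$ factor. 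The delicate point throughout is that the boundary term must be written with $Q_b\bv_0$ rather than $\bv_0$ — this substitution is valid precisely because $\varepsilon(\bv_0)\bn|_e$ lies in $[P_{k-1}(e)]^d$, which is contained in the test space $V_{k-1}(e)$ for all $k\ge1$ (the $P_{RM}(e)$ enlargement in the $k=1$ case only helps). Once these two element-wise estimates are in hand, the lemma follows by summation.
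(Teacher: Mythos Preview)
Your lower bound is correct and is exactly what the paper does: test the identity with the symmetric tensor $\tau=\varepsilon(\bv_0)\in[P_{k-1}(T)]^{d\times d}$, use the trace/inverse inequality on $\|\varepsilon(\bv_0)\bn\|_{\partial T}$, and absorb. Good.

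The upper bound, however, has a genuine gap. By testing with $\tau=\nabla_w\bv$ you obtain
\[
\|\nabla_w\bv\|_T\le C\big(\|\nabla\bv_0\|_T+h_T^{-1/2}\|Q_b\bv_0-\bv_b\|_{\partial T}\big),
\]
and hence the same bound for $\|\varepsilon_w(\bv)\|_T$. But now the right-hand side contains $\|\nabla\bv_0\|_T$, not $\|\varepsilon(\bv_0)\|_T$, and you propose to close this by invoking Lemma~\ref{korninqua}. That lemma does \emph{not} say what you need: estimate (\ref{EQ:July29:815}) bounds the boundary residual $\|\bv_0-Q_b\bv_0\|_{\partial T}$ by $h_T^{1/2}\|\varepsilon(\bv_0)\|_T$; it gives no control of $\sum_T\|\nabla\bv_0\|_T^2$ in terms of $\sum_T\|\varepsilon(\bv_0)\|_T^2+s(\bv,\bv)$. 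What you are tacitly assuming is a discrete Korn inequality on $V_h^0$ (in the spirit of Brenner \cite{b2003}), which is a separate, global, and nontrivial result not established in the paper.

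The fix is to avoid the detour entirely. Test with a \emph{symmetric} tensor, namely $\varphi=\varepsilon_w(\bv)$ itself. Since $(\nabla_w\bv,\varphi)_T=(\varepsilon_w(\bv),\varphi)_T$ and $(\nabla\bv_0,\varphi)_T=(\varepsilon(\bv_0),\varphi)_T$ whenever $\varphi$ is symmetric, the same identity you already wrote becomes
\[
\|\varepsilon_w(\bv)\|_T^2=(\varepsilon(\bv_0),\varepsilon_w(\bv))_T-\langle Q_b\bv_0-\bv_b,\varepsilon_w(\bv)\bn\rangle_{\partial T},
\]
and Cauchy--Schwarz plus the trace/inverse inequality give
\[
\|\varepsilon_w(\bv)\|_T^2\le 2\|\varepsilon(\bv_0)\|_T^2+Ch_T^{-1}\|Q_b\bv_0-\bv_b\|_{\partial T}^2,
\]
which is precisely (\ref{EQ:July25.002}). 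Summing over $T$ yields the upper bound with no Korn step needed. This is the paper's route; your lower-bound argument already uses the right idea (symmetric test tensor), so you only have to mirror it for the upper bound.
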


\begin{proof} From (\ref{2.5}) and the integration by parts, we have
 \begin{equation}\label{EQ:July25.000}
 \begin{split}
  (\varepsilon_w  (\bv),\varphi)_T
=&(\nabla \bv_0,\frac{1}{2}(\varphi+\varphi^T))_T-\langle
\bv_0-\bv_b,
\frac{1}{2}(\varphi+\varphi^T)\bn\rangle_{\partial T}\\
=&(\varepsilon( \bv_0), \varphi )_T-\langle Q_b\bv_0-\bv_b,
\frac{1}{2}(\varphi+\varphi^T)\bn\rangle_{\partial T}
 \end{split}
 \end{equation}
for any $\varphi\in [P_{k-1}(T)]^{d\times d}$. Thus,
\begin{equation}\label{EQ:July25.001}
|(\varepsilon_w  (\bv),\varphi)_T| \leq \|\varepsilon
(\bv_0)\|_T\|\varphi\|_T+\| Q_b\bv_0-\bv_b\|_{\partial T}\|
\frac{1}{2}(\varphi+\varphi^T)\bn\|_{\partial T}.
\end{equation}
From the trace inequality (\ref{Aa}) and the usual inverse
inequality we have
\begin{eqnarray*}
\| \frac{1}{2}(\varphi+\varphi^T)\bn\|_{\partial T} &\leq & C \left(
h_T^{-1}\|\varphi\|_T^2 + h_T
\|\nabla\varphi\|_T^2\right)^{\frac12}\\
&\leq & C h_T^{-\frac12}\|\varphi\|_T.
\end{eqnarray*}
Substituting the above into (\ref{EQ:July25.001}) yields
\begin{equation*}
|(\varepsilon_w  (\bv),\varphi)_T| \leq
\left(\|\varepsilon(\bv_0)\|_T+ C
h_T^{-\frac12}\|Q_b\bv_0-\bv_b\|_{\partial T}\right)\|\varphi\|_T,
\end{equation*}
which leads to
\begin{equation}\label{EQ:July25.002}
\|\varepsilon_w(\bv)\|_{T}^2 \leq 2\|\varepsilon(\bv_0)\|_T^2+ C
h_T^{-1}\|Q_b\bv_0-\bv_b\|_{\partial T}^2.
\end{equation}

By representing $(\varepsilon( \bv_0), \varphi )_T$ in terms of
other two terms in (\ref{EQ:July25.000}), we can derive the
following analogy of (\ref{EQ:July25.002})
\begin{equation}\label{EQ:July25.003}
\|\varepsilon(\bv_0)\|_{T}^2 \leq 2\|\varepsilon_w(\bv)\|_T^2+ C
h_T^{-1}\|Q_b\bv_0-\bv_b\|_{\partial T}^2.
\end{equation}
The left inequality in (\ref{EQ:coercivity}) is a result of
(\ref{EQ:July25.003}) by summing over all the element $T\in\T_h$,
and the right inequality can be obtained by summing
(\ref{EQ:July25.002}) over $T\in\T_h$.
\end{proof}

In the finite element space  $W_h^0$, we introduce the following
norm
$$
\3bar q\3bar_0^2 = |q|_{0,h}^2+h^2\|\nabla q\|_{0,h}^2,\qquad q\in
W_h,
$$
where
$$
|q|_{0,h}^2= h\sum_{e\in {\cal E}_h^0} \|[\![q]\!]_e\|_e^2,\qquad
\|\nabla q\|^2_{0,h}=\sum_{T\in {\cal T}_h}\|\nabla q\|_T^2.
$$

\begin{lemma}\label{lemma-infsup}
There exists a constant $\beta>0$ such that
\begin{equation}\label{EQ:inf-sup}
\sup_{\bv\in V_h^0, \bv\neq 0} \frac{b(\bv,q)}{\3bar \bv \3bar} \geq
\beta \3bar q\3bar_0, \qquad \forall \ q\in W_h^0.
\end{equation}
\end{lemma}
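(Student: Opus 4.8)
The plan is to transfer the classical Ladyzhenskaya--Babu\v{s}ka--Brezzi inf-sup condition for the continuous Stokes problem to the weak Galerkin setting by means of a commuting-type property of the discrete weak divergence under the canonical $L^2$ projection. Throughout, let $Q_0$ denote the element-wise $L^2$ projection onto $[P_k(T)]^d$ and set $Q_h\bv=\{Q_0\bv,Q_b\bv\}$ for $\bv\in[H^1(\Omega)]^d$; since $b(\bv,q)=\sum_{T\in\T_h}(\nabla_w\cdot\bv,q)_T$, the essential point is to produce, for a given $q\in W_h^0$, a discrete velocity whose weak divergence reproduces $q$ element by element.

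First, since $W_h^0\subset L_0^2(\Omega)$ and $\Omega$ is a bounded Lipschitz domain, the continuous inf-sup condition furnishes $\tilde\bv\in[H_0^1(\Omega)]^d$ with $\nabla\cdot\tilde\bv=q$ in $\Omega$ and $\|\tilde\bv\|_1\le C\|q\|$, the constant $C$ depending only on $\Omega$. Set $\bv=Q_h\tilde\bv$; because $\tilde\bv$ has vanishing trace on $\Gamma$ we have $\bv\in V_h^0$. Next I would verify that on each $T\in\T_h$,
\begin{equation*}
\nabla_w\cdot(Q_h\tilde\bv)=Q_{k-1}(\nabla\cdot\tilde\bv),
\end{equation*}
where $Q_{k-1}$ is the $L^2$ projection onto $P_{k-1}(T)$. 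Indeed, for $\phi\in P_{k-1}(T)$ the definition (\ref{2.2}) gives $(\nabla_w\cdot Q_h\tilde\bv,\phi)_T=-(Q_0\tilde\bv,\nabla\phi)_T+\langle(Q_b\tilde\bv)\cdot\bn,\phi\rangle_{\partial T}$; using that $\nabla\phi\in[P_{k-1}(T)]^d$, that $\bn$ is piecewise constant on $\partial T$ so $\phi\bn|_e\in V_{k-1}(e)$ for every face $e\subset\partial T$ (for $k=1$, $\phi\bn$ is a constant vector, which lies in $P_{RM}(e)$), and the defining properties of $Q_0,Q_b$, this equals $-(\tilde\bv,\nabla\phi)_T+\langle\tilde\bv\cdot\bn,\phi\rangle_{\partial T}=(\nabla\cdot\tilde\bv,\phi)_T$ after integration by parts. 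Since $q|_T\in P_{k-1}(T)$, it follows that
\begin{equation*}
b(Q_h\tilde\bv,q)=\sum_{T\in\T_h}(\nabla\cdot\tilde\bv,q)_T=(q,q)=\|q\|^2 .
\end{equation*}
I would then bound $\3bar Q_h\tilde\bv\3bar\le C\|\tilde\bv\|_1\le C\|q\|$ using the standard stability estimates for $Q_0$ on shape-regular polygonal elements — namely $\|\varepsilon(Q_0\tilde\bv)\|_T\le C\|\tilde\bv\|_{1,T}$ and, via the trace inequality, $h_T^{-1}\|Q_b(Q_0\tilde\bv-\tilde\bv)\|_{\partial T}^2\le C\|\tilde\bv\|_{1,T}^2$, summed over $T$ (these are the supporting tools of Section 9). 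Finally, a direct application of the inverse inequality to $h^2\|\nabla q\|_{0,h}^2$ and of the trace inequality to the jump seminorm $|q|_{0,h}^2$ gives $\3bar q\3bar_0\le C\|q\|$ on $W_h^0$. Combining, for $q\neq0$ the choice $\bv=Q_h\tilde\bv$ is nonzero and
\begin{equation*}
\sup_{\bv\in V_h^0,\ \bv\neq0}\frac{b(\bv,q)}{\3bar\bv\3bar}\ \ge\ \frac{\|q\|^2}{\3bar Q_h\tilde\bv\3bar}\ \ge\ \frac{\|q\|^2}{C\|q\|}\ =\ \frac{\|q\|}{C}\ \ge\ \beta\,\3bar q\3bar_0 ,
\end{equation*}
with $\beta>0$ depending only on $\Omega$ and the shape-regularity of $\T_h$.

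The step I expect to be the main obstacle is the $\3bar\cdot\3bar$-stability of the projection, $\3bar Q_h\tilde\bv\3bar\le C\|\tilde\bv\|_1$, since it must hold uniformly over arbitrary shape-regular polygons/polyhedra and therefore rests on the trace and (domain) inverse inequalities of \cite{wy1202} rather than on a reference-element scaling argument; a lesser but genuine technical point is the treatment of the rigid-motion component of $V_{k-1}(e)$ when $k=1$, which must be handled both in the commuting identity above and in checking $Q_h\tilde\bv\in V_h^0$. The remaining estimates ($\3bar q\3bar_0\le C\|q\|$ and the reproduction identity $b(Q_h\tilde\bv,q)=\|q\|^2$) are routine once these are in place.
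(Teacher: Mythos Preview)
Your argument is correct but follows a genuinely different route from the paper. The paper does not invoke the continuous LBB condition at all: instead it produces, for a given $q\in W_h^0$, an explicit test function $\bv_q=\{-h^2\nabla q,\ h[\![q]\!]_e\bn\}\in V_h^0$ built directly from the piecewise gradient and interior jumps of $q$, checks by one line from (\ref{2.2}) that $b(\bv_q,q)=\3bar q\3bar_0^2$, and then verifies $\3bar\bv_q\3bar\le C_0\3bar q\3bar_0$ by elementary inverse/trace estimates. Your approach is the Fortin-operator route: lift $q$ to $\tilde\bv\in[H_0^1(\Omega)]^d$ with $\nabla\cdot\tilde\bv=q$, project via $Q_h$, use the commuting identity (this is exactly Lemma~\ref{lemma4.2}) to get $b(Q_h\tilde\bv,q)=\|q\|^2$, and then pass to $\3bar q\3bar_0$ through $\3bar q\3bar_0\le C\|q\|$. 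What you gain is a \emph{stronger} inf-sup, namely with the full $L^2$ norm $\|q\|$ on the right, from which (\ref{EQ:inf-sup}) follows a fortiori; what the paper gains is a self-contained, purely discrete construction that avoids the continuous divergence surjectivity and the $H^1$-stability of $Q_0$ on general polytopes (the step you correctly flag as the main technical burden). Both proofs are short; the paper's is more elementary and tailored to the mesh-dependent norm $\3bar\cdot\3bar_0$, while yours is more transferable and gives a quantitatively stronger statement.
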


\begin{proof}
From the definition of the discrete weak divergence (\ref{2.2}), we
have
\begin{equation}\label{EQ:July19.001}
b(\textbf{v},q)= \sum_{T\in {\cal T}_h} (\nabla_w \cdot \textbf{v},q)_T\\
= \sum_{T\in {\cal T}_h} \left\{-(\textbf{v}_0,\nabla q)_T+\langle
\textbf{v}_b \cdot \textbf{n}, q\rangle_{\partial T}\right\}.
\end{equation}
By setting $\textbf{v}=\textbf{v}_{q}:=\{-h^2\nabla q,
h[\![q]\!]_e\textbf{n}\}$ in (\ref{EQ:July19.001}), where $e\in
{\cal E}_h^0$ and $\textbf{n}$ is the unit outward normal direction
on $e$, we arrive at
\begin{equation}\label{EQ:July19:002}
b(\textbf{v}_{q},q)=h^2\sum_{T\in {\cal T}_h} \|\nabla q\|_T^2 +
h\sum_{e\in {\cal E}_h^0} \|[\![q]\!]_e\|_e^2=\3bar q\3bar_0^2.
\end{equation}
Furthermore, it is not hard to see that there exists a constant
$C_0$ such that
\begin{equation}\label{EQ:July19:003}
\3bar \bv_q\3bar \leq C_0 \3bar q \3bar_0.
\end{equation}
Thus, we have
\begin{eqnarray*}
\sup_{\bv\in V_h^0, \bv\neq 0} \frac{b(\bv,q)}{\3bar \bv \3bar} \geq
\frac{b(\bv_q,q)}{\3bar \bv_q \3bar}
 = \frac{\3bar q\3bar_0^2}{\3bar \bv_q \3bar}
 \ge  C_0^{-1} \3bar q\3bar_0,
\end{eqnarray*}
which proves the inf-sup condition (\ref{EQ:inf-sup}).
\end{proof}

\section{Preparation for Error Estimates}
For each element $T\in {\cal T}_h$, let $Q_0$ be the $L^2$
projection onto $[P_k(T)]^d$. For each edge/face $e\subset
\partial T$, recall that $Q_b$ is the $L^2$ projection onto the
finite element space on $e$. Denote by $Q_h\textbf{u}$ the $L^2$
projection onto the weak finite element space $V_h$ such that on
each element $T\in {\cal T}_h$,
$$
Q_h \textbf{u }:= \{Q_0\textbf{u}, Q_b \textbf{u}\}.
$$
Furthermore, let ${\cal Q}_h$ and $\textbf{Q}_h$ be the $L^2$
projection onto $P_{k-1}(T)$ and $[P_{k-1}(T )]^{d\times d}$,
respectively.

\begin{lemma}\cite{wy1302} \label{lemma4.2} For any $\bv \in [H^1(\Omega)]^d$, the
following identities hold true for the projection operators $Q_h$,
$\textbf{Q}_h$, and ${\cal Q}_h$

\begin{align}\label{4.4}
\nabla_w \cdot(Q_h \bv)=&{\cal Q}_h (\nabla \cdot
\bv), \\
\nabla_w (Q_h \bv)=& \textbf{Q}_h (\nabla \bv).\label{4.5}
\end{align}
Consequently, one has
\begin{equation}\label{EQ:July23.001}
\varepsilon_w(Q_h \bv) = \textbf{Q}_h \varepsilon(\bv).
\end{equation}
\end{lemma}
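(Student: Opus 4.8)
The plan is to establish each identity directly from the defining relations of the discrete weak operators, by fixing an element $T\in\T_h$, testing against an arbitrary polynomial on $T$, and invoking the defining orthogonality of the $L^2$ projections $Q_0$, $Q_b$, ${\cal Q}_h$, $\textbf{Q}_h$; this is the route of the cited reference \cite{wy1302}. Throughout, the hypothesis $\bv\in[H^1(\Omega)]^d$ is what makes the traces on $\pT$ and the elementwise integration by parts legitimate.

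For (\ref{4.4}), fix $\phi\in P_{k-1}(T)$. Applying the definition of the discrete weak divergence (\ref{2.2}) to $Q_h\bv=\{Q_0\bv,Q_b\bv\}$,
\[
(\nabla_w\cdot(Q_h\bv),\phi)_T = -(Q_0\bv,\nabla\phi)_T + \langle Q_b\bv\cdot\bn,\phi\rangle_{\pT}.
\]
Since $\nabla\phi\in[P_{k-1}(T)]^d\subset[P_k(T)]^d$, the projection property of $Q_0$ gives $(Q_0\bv,\nabla\phi)_T=(\bv,\nabla\phi)_T$. On each face $e\subset\pT$ the normal $\bn$ is constant, so $\phi\bn|_e\in[P_{k-1}(e)]^d\subset V_{k-1}(e)$ (for $k=1$ one uses $[P_0(e)]^d\subset P_{RM}(e)$), whence $\langle Q_b\bv\cdot\bn,\phi\rangle_e=\langle Q_b\bv,\phi\bn\rangle_e=\langle\bv,\phi\bn\rangle_e=\langle\bv\cdot\bn,\phi\rangle_e$. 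Integrating by parts on $T$ and using $\phi\in P_{k-1}(T)$ then gives
\[
(\nabla_w\cdot(Q_h\bv),\phi)_T = -(\bv,\nabla\phi)_T+\langle\bv\cdot\bn,\phi\rangle_{\pT} = (\nabla\cdot\bv,\phi)_T = ({\cal Q}_h(\nabla\cdot\bv),\phi)_T.
\]
Since $\phi$ is arbitrary in $P_{k-1}(T)$ and both sides lie in $P_{k-1}(T)$, identity (\ref{4.4}) follows. The proof of (\ref{4.5}) is entirely parallel: test (\ref{2.5}) for $\nabla_w(Q_h\bv)$ against an arbitrary $\varphi\in[P_{k-1}(T)]^{d\times d}$, drop $Q_0$ using $\nabla\cdot\varphi\in[P_{k-2}(T)]^d\subset[P_k(T)]^d$, drop $Q_b$ using $\varphi\bn|_e\in[P_{k-1}(e)]^d\subset V_{k-1}(e)$, and integrate by parts row by row to recognize the right-hand side as $(\nabla\bv,\varphi)_T=(\textbf{Q}_h(\nabla\bv),\varphi)_T$. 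Finally, (\ref{EQ:July23.001}) is immediate from (\ref{4.5}), the definition $\varepsilon_w(\bu)=\tfrac12(\nabla_w\bu+(\nabla_w\bu)^T)$, and the observation that the component-wise $L^2$ projection $\textbf{Q}_h$ commutes with matrix transposition, so that $\tfrac12\bigl(\textbf{Q}_h(\nabla\bv)+(\textbf{Q}_h(\nabla\bv))^T\bigr)=\textbf{Q}_h\bigl(\tfrac12(\nabla\bv+(\nabla\bv)^T)\bigr)=\textbf{Q}_h\varepsilon(\bv)$.

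The computations are routine; the one spot that needs a moment of care is the boundary term, where one must verify that $\phi\bn$ (respectively $\varphi\bn$) restricted to a face $e$ lies in the boundary finite element space $V_{k-1}(e)$, so that $Q_b$ can be inserted or removed without incurring an error. The only case that is not completely obvious is the lowest order $k=1$, where $V_0(e)=P_{RM}(e)$ rather than $[P_0(e)]^d$; there it still works because $P_{RM}(e)$ contains every constant vector field. Everything else is integration by parts together with the defining orthogonality of the $L^2$ projections.
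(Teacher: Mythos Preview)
Your proof is correct and follows essentially the same route as the paper's own proof: apply the definition of the discrete weak operator to $Q_h\bv$, remove $Q_0$ and $Q_b$ via their $L^2$ orthogonality, integrate by parts, and identify the result as the appropriate projection of the classical operator. You have simply filled in more detail than the paper does (in particular, the verification that $\phi\bn|_e$ and $\varphi\bn|_e$ lie in $V_{k-1}(e)$, including the lowest-order case $k=1$), which the paper leaves implicit.
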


\begin{proof}  We outline a proof
for (\ref{4.4}) only; a similar approach can be adopted to prove
(\ref{4.5}). From (\ref{2.2}) and the integration by parts we have
\begin{align*}
(\nabla_w \cdot(Q_h \bv),
\varphi)_T&=-(Q_0\bv,\nabla \varphi)_T+\langle
Q_b\bv\cdot \bn,\varphi\rangle_{\partial T}\\
&=-( \bv,\nabla \varphi)_T+\langle
 \bv\cdot \bn,\varphi\rangle_{\partial T}\\
&=(\nabla\cdot \bv, \varphi)_T\\
&=({\cal Q}_h (\nabla\cdot \bv),\varphi)_T,
\end{align*}
for all $\varphi \in P_{k-1}(T)$. Since by construction $\nabla_w
\cdot(Q_h \bv)\in P_{k-1}(T)$, then (\ref{4.4}) follows.
\end{proof}

\medskip

\begin{lemma}\label{lemma5.1}
Assume that $(\bw; \rho)\in [H^{1+\gamma}(\Omega)]^d \times
H^1(\Omega)$, $\gamma>\frac12$, satisfies the following equation
\begin{equation}\label{5.2}
 2\nabla\cdot (\mu\varepsilon(\bw))+ \nabla \rho=-{\bm \eta}, \qquad
 \text{in}\ \Omega.
\end{equation}
Let $(Q_h\bw;{\cal Q}_h\rho)$ be the $L^2$ projection of $(\bw;
\rho)$ in the finite element space $V_h\times W_h$. Then, we have
\begin{equation}\label{erreqn}
2(\mu \varepsilon_w(Q_h\bw), \varepsilon_w(\bv))_h +(\nabla_w\cdot
\bv,{\cal Q}_h\rho)_h=({\bm\eta},\bv_0)+\ell_\textbf{\bw}
(\bv)+\theta_\rho(\bv),
\end{equation}
for all $\bv\in V_h^0$, where $\ell_\bw$ and $\theta_\rho$ are two
functionals in the linear space $V_h^0$ given by
\begin{align}\label{l}
\ell_\bw(\bv)&= 2\sum_{T\in{\cal T}_h}\langle
 \bv_0-\bv_b,\mu(\varepsilon(\bw)-\textbf{Q}_h\varepsilon(\bw)
 )\bn\rangle_{\partial T},\\
\theta_\rho(\bv)&=\sum_{T\in{\cal T}_h}\langle
 \bv_0-\bv_b  ,
(\rho- {\cal Q}_h\rho) \bn\rangle_{\partial T}.\label{theta}
\end{align}
\end{lemma}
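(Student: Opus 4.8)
The plan is to test the differential identity (\ref{5.2}) against the interior component $\bv_0$ of an arbitrary $\bv\in V_h^0$, integrate by parts element-by-element, and then convert all the volume integrals of $\varepsilon(\bw)$ and $\rho$ against $\nabla\bv_0$ into the corresponding \emph{weak} operators applied to $Q_h\bw$ and ${\cal Q}_h\rho$, picking up boundary correction terms along the way. Concretely, first I would multiply (\ref{5.2}) by $\bv_0$, sum over $T\in\T_h$, and integrate by parts on each element to obtain
$$
({\bm\eta},\bv_0) = 2\sum_{T}(\mu\varepsilon(\bw),\nabla\bv_0)_T - 2\sum_T\langle\mu\varepsilon(\bw)\bn,\bv_0\rangle_{\partial T} + \sum_T(\rho,\nabla\cdot\bv_0)_T - \sum_T\langle\rho\,\bn,\bv_0\rangle_{\partial T},
$$
where I have used the symmetry of $\varepsilon(\bw)$ to replace $\nabla\bv_0$ by $\varepsilon(\bv_0)$ in the first term. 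Because $\bv_b$ is single-valued on interior faces and vanishes on $\Gamma$, and $\mu\varepsilon(\bw)$, $\rho$ are continuous, the quantities $\sum_T\langle\mu\varepsilon(\bw)\bn,\bv_b\rangle_{\partial T}$ and $\sum_T\langle\rho\,\bn,\bv_b\rangle_{\partial T}$ both vanish; inserting these zero terms lets me rewrite the boundary sums with $\bv_0-\bv_b$ in place of $\bv_0$.

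Next I would bring in the discrete weak operators. By the definition (\ref{2.5}) of $\varepsilon_w$ (in the form already derived as (\ref{EQ:July25.000})) and Lemma~\ref{lemma4.2},
$$
2(\mu\varepsilon_w(Q_h\bw),\varepsilon_w(\bv))_h = 2\sum_T(\mu\textbf{Q}_h\varepsilon(\bw),\varepsilon_w(\bv))_T = 2\sum_T(\mu\varepsilon(\bw),\varepsilon_w(\bv))_T,
$$
and then applying (\ref{EQ:July25.000}) once more to expand $\varepsilon_w(\bv)$ gives $2\sum_T(\mu\varepsilon(\bw),\varepsilon(\bv_0))_T - 2\sum_T\langle Q_b\bv_0-\bv_b,\mu\varepsilon(\bw)\bn\rangle_{\partial T}$; here I also use that $\textbf{Q}_h(\mu\varepsilon(\bw))$ may be swapped back to $\mu\varepsilon(\bw)$ against the polynomial $\varepsilon_w(\bv)$. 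A parallel computation using (\ref{2.2}) and (\ref{4.4}) gives $(\nabla_w\cdot\bv,{\cal Q}_h\rho)_h = \sum_T(\rho,\nabla\cdot\bv_0)_T - \sum_T\langle Q_b\bv_0-\bv_b,\rho\,\bn\rangle_{\partial T}$, after inserting $\textbf{Q}_h\rho$ and removing it against polynomials. Adding these two displays and subtracting from the identity for $({\bm\eta},\bv_0)$, the volume terms $2\sum_T(\mu\varepsilon(\bw),\varepsilon(\bv_0))_T$ and $\sum_T(\rho,\nabla\cdot\bv_0)_T$ cancel, leaving only boundary terms: $-2\sum_T\langle\bv_0-\bv_b,\mu\varepsilon(\bw)\bn\rangle + 2\sum_T\langle Q_b\bv_0-\bv_b,\mu\varepsilon(\bw)\bn\rangle$ from the strain part and the analogous pair from the pressure part. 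Combining, the strain boundary residual is $2\sum_T\langle(Q_b\bv_0-\bv_b)-(\bv_0-\bv_b),\mu\varepsilon(\bw)\bn\rangle = 2\sum_T\langle Q_b\bv_0-\bv_0,\mu\varepsilon(\bw)\bn\rangle$; since $Q_b\bv_0-\bv_0$ is $L^2(e)$-orthogonal to $V_{k-1}(e)$, I may replace $\mu\varepsilon(\bw)\bn$ by $\mu(\varepsilon(\bw)-\textbf{Q}_h\varepsilon(\bw))\bn$, and likewise rewrite $\langle Q_b\bv_0-\bv_0,\bv_0-\bv_b\rangle$-type bookkeeping to land exactly on the forms $\ell_\bw$ and $\theta_\rho$ as written in (\ref{l})–(\ref{theta}).

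The one genuinely delicate point is the precise handling of the projection $Q_b$ versus the raw trace in all four boundary sums, so that the cancellation collapses to the asserted combination $\bv_0-\bv_b$ appearing in (\ref{l})–(\ref{theta}) rather than some mixture of $Q_b\bv_0-\bv_b$ and $Q_b\bv_0-\bv_0$; this is where one must use, repeatedly and carefully, that $\langle Q_b\bv_0-\bv_0,\chi\rangle_e=0$ for any $\chi\in V_{k-1}(e)$ together with the continuity of $\bw$ and $\rho$ to drop the $\bv_b$ contributions across interior faces. Everything else is routine integration by parts and invocation of Lemma~\ref{lemma4.2}; no inequalities, traces, or inverse estimates are needed since this is an exact identity. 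I would present the argument as a single chain of equalities, flagging each use of (\ref{EQ:July25.000}), (\ref{4.4})–(\ref{4.5}), and the orthogonality of $Q_b$.
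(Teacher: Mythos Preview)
Your strategy is exactly the paper's: test (\ref{5.2}) against $\bv_0$, integrate by parts elementwise, insert the harmless $\bv_b$ contributions on interior faces, and then compare with the expansion of the weak operators via Lemma~\ref{lemma4.2} so that the volume terms cancel and only the boundary residuals $\ell_\bw$ and $\theta_\rho$ remain.

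There is one concrete slip in your displayed computations that is the source of the ``delicate point'' you flag. When you expand the weak strain term via (\ref{EQ:July25.000}) with $\varphi=\mu\textbf{Q}_h\varepsilon(\bw)$, the boundary contribution is
\[
-2\sum_T\langle Q_b\bv_0-\bv_b,\ \mu\,\textbf{Q}_h\varepsilon(\bw)\,\bn\rangle_{\partial T},
\]
not $-2\sum_T\langle Q_b\bv_0-\bv_b,\mu\varepsilon(\bw)\bn\rangle_{\partial T}$ as you wrote; similarly the divergence boundary term carries $\mathcal{Q}_h\rho$, not $\rho$. Because $\textbf{Q}_h\varepsilon(\bw)\bn$ and $(\mathcal{Q}_h\rho)\bn$ restrict to elements of $V_{k-1}(e)$ on each face, you may replace $Q_b\bv_0-\bv_b$ by $\bv_0-\bv_b$ in these two terms at no cost. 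After that replacement, the subtraction from the integrated form of (\ref{5.2}) (whose boundary terms carry the unprojected $\varepsilon(\bw)$ and $\rho$) gives directly
\[
2\sum_T\langle \bv_0-\bv_b,\ \mu(\varepsilon(\bw)-\textbf{Q}_h\varepsilon(\bw))\bn\rangle_{\partial T}
+\sum_T\langle \bv_0-\bv_b,\ (\rho-\mathcal{Q}_h\rho)\bn\rangle_{\partial T},
\]
which is precisely $\ell_\bw(\bv)+\theta_\rho(\bv)$. No detour through $Q_b\bv_0-\bv_0$ is needed, and the bookkeeping you were worried about disappears.
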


\begin{proof}
From (\ref{EQ:July23.001}), (\ref{2.5}) and the integration by
parts, we get
\begin{equation}\label{5.4}
\begin{split}
& 2\mu(\varepsilon_w(Q_h\bw), \varepsilon_w(\bv))_T\\
=&2\mu ( \textbf{Q}_h \varepsilon(\bw),\varepsilon_w(\bv))_T\\
=&  -2\mu(\bv_0, \nabla\cdot (\textbf{Q}_h \varepsilon(\bw)
))_T+2\mu\langle \bv_b, \textbf{Q}_h \varepsilon(
\bw )\bn\rangle_{\partial T}\\
=&  2\mu( \nabla \bv_0, \textbf{Q}_h\varepsilon( \bw))_T-2\mu\langle
\bv_0 - \bv_b, \textbf{Q}_h \varepsilon(\bw)\bn\rangle_{\partial T}\\
=&  2\mu(\nabla\bv_0, \varepsilon(\bw)) _T-2\mu\langle \bv_0 - \bv_b
,\textbf{Q}_h \varepsilon( \bw)\bn\rangle_{\partial T}.
\end{split}
\end{equation}
Next, we have from (\ref{2.2}), the integration by parts, and
$
\sum_{T\in {\cal T}_h}\langle
\bv_b,\rho\bn\rangle_{\partial T}=0
$
that
\begin{equation*}
\begin{split}
(\nabla_w\cdot \bv,{\cal Q}_h \rho)_h=& \sum_{T\in {\cal T}_h}
(\nabla_w\cdot \bv,{\cal Q}_h \rho)_T \\
=&\sum_{T\in {\cal T}_h} \left\{-(\bv_0,\nabla({\cal Q}_h
\rho))_T+\langle \bv_b,({\cal Q}_h \rho)\bn\rangle_{\partial
T}\right\}\\
=& \sum_{T\in {\cal T}_h} \left\{(\nabla \cdot \bv_0, {\cal Q}_h
\rho )_T-\langle \bv_0-\bv_b,({\cal Q}_h \rho)\bn\rangle_{\partial
T}\right\}\\
=& \sum_{T\in {\cal T}_h} \left\{(\nabla \cdot \bv_0, \rho )_T-
\langle \bv_0-\bv_b,({\cal Q}_h \rho)\bn\rangle_{\partial
T}\right\}\\
=& \sum_{T\in {\cal T}_h}\left\{-(\bv_0,\nabla  \rho )_T+\langle
\bv_0,  \rho \bn \rangle_{\partial T} -\langle \bv_0-\bv_b,({\cal
Q}_h \rho)\bn\rangle_{\partial
T}\right\}\\
=& \sum_{T\in {\cal T}_h}\left\{-(\bv_0,\nabla  \rho )_T+\langle
\bv_0-\bv_b,  \rho \bn \rangle_{\partial T}-\langle
\bv_0-\bv_b,({\cal Q}_h
\rho)\bn\rangle_{\partial T}\right\}\\
=&  -( \bv_0,\nabla  \rho ) + \sum_{T\in {\cal T}_h} \langle
\bv_0-\bv_b, (\rho-{\cal Q}_h \rho) \bn \rangle_{\partial T},
\end{split}
\end{equation*}
which leads to
\begin{equation}\label{5.5}
( \bv_0,\nabla  \rho ) =-(\nabla_w\cdot \bv,{\cal Q}_h \rho)_h+
\sum_{T\in {\cal T}_h} \langle \bv_0-\bv_b, (\rho-{\cal Q}_h \rho)
\bn \rangle_{\partial T}.
\end{equation}

Now testing (\ref{5.2}) by using the component $\textbf{v}_0$ of
$\textbf{v} =\{\textbf{v}_0, \textbf{v}_b\} \in  V^ 0_ h$  yields
\begin{equation}\label{5.6}
-2( \nabla\cdot( \mu\varepsilon( \textbf{w})),\textbf{v}_0)-( \nabla
\rho,\textbf{v}_0) =( {\bm \eta},\textbf{v}_0) .
\end{equation}
From the integration by parts, we can rewrite (\ref{5.6}) as
\begin{equation}\label{above}
\begin{split}
 2\sum_{T\in{\cal T}_h}( \mu \varepsilon( \textbf{w}) ,
\nabla\textbf{v}_0)_T-2 \sum_{T\in{\cal T}_h}\langle\mu \varepsilon(
\textbf{w}) \textbf{n}, \bv_0\rangle_{\partial T}-( \nabla
\rho,\textbf{v}_0)=( {\bm \eta},\textbf{v}_0) .
\end{split}
\end{equation}
Substituting (\ref{5.4}) and (\ref{5.5}) into (\ref{above}) yields
\begin{equation*}\label{5.7}
\begin{split}
 &2\mu \sum_{T\in{\cal T}_h} \left\{(\varepsilon_w(Q_h\bw), \varepsilon_w(\bv))_T+
 \langle \bv_0 - \bv_b,\textbf{Q}_h \varepsilon( \bw)\bn\rangle_{\partial T}-
 \langle\bv_0,\varepsilon(\bw) \bn \rangle_{\partial T}\right\}\\
&+(\nabla_w\cdot \textbf{v},{\cal Q}_h \rho)_h- \sum_{T\in {\cal
T}_h} \langle \textbf{v}_0-\textbf{v}_b, (\rho-{\cal Q}_h \rho)
\textbf{n} \rangle_{\partial T}=( {\bm \eta},\textbf{v}_0),
\end{split}
\end{equation*}
which implies that
\begin{equation*}
\begin{split}
 &2\mu \sum_{T\in{\cal T}_h} \left\{(\varepsilon_w(Q_h\bw), \varepsilon_w(\bv))_T-
 \langle \bv_0 - \bv_b ,(\varepsilon(\bw)- \textbf{Q}_h
\varepsilon( \bw))\bn\rangle_{\partial
T}-\langle\bv_b,\varepsilon(\bw)
\bn \rangle_{\partial T}\right\}\\
&+(\nabla_w\cdot \textbf{v},{\cal Q}_h \rho)_h- \sum_{T\in {\cal
T}_h} \langle \textbf{v}_0-\textbf{v}_b, (\rho-{\cal Q}_h \rho)
\textbf{n} \rangle_{\partial T} =( {\bm \eta},\textbf{v}_0).
\end{split}
\end{equation*}
Using the boundary condition $\bv_b=0$ we obtain
\begin{equation}\label{equa}
\begin{split}
&\ 2(\mu\varepsilon_w(Q_h\bw), \varepsilon_w(\bv))_h+(\nabla_w\cdot
\textbf{v},{\cal Q}_h \rho)_h \\
= &\sum_{T\in {\cal T}_h}\left\{2\langle \bv_0 - \bv_b,
\mu(\varepsilon( \bw)- \textbf{Q}_h \varepsilon(
\bw))\bn\rangle_{\partial T}+ \langle \bv_0-\bv_b,
(\rho-{\cal Q}_h \rho) \bn \rangle_{\partial T}\right\}\\
&+( {\bm\eta},\textbf{v}_0),
\end{split}
\end{equation}
which is precisely the equation (\ref{erreqn}). This completes the
proof.
\end{proof}

\section{Error Estimate in a Discrete $H^1$-Norm}
For the weak Galerkin finite element solution $(\bu_h;p_h)=(
\{\bu_0, \bu_b\};p_h) \in V_h\times W_h$ arising from
(\ref{3.3})-(\ref{3.4}) for the linear elasticity problem
(\ref{primal_model})-(\ref{bc1}), we define its error functions
$\textbf{e}_h$ and $\zeta_h$ by
\begin{align}\label{5.1}
\be_h&=\{\be_0,\be_b\}=\{Q_0\bu-\bu_0,
Q_b\bu-\bu_b\},\\
 \zeta_h&={\cal Q}_h p-p_h,\label{ph}
\end{align}
where $(\bu;p)$ is the exact solution of the variational problem
(\ref{var1})-(\ref{var2}). It is clear that $\be_h\in V_h^0$ and
$\zeta_h\in W_h^0$.

\begin{lemma} \label{Lem5.2}  The error functions
$\be_h$ and $\zeta_h$ defined in (\ref{5.1})-(\ref{ph}) satisfy the
following error equations
\begin{align}\label{4.11}
a(\be_h,\bv)+b(\bv, \zeta_h)&=
\varphi_{\bu,p}(\bv), \qquad \forall \bv\in V_h^0,\\
b(\be_h,q)-d(\zeta_h,q)&=0,\qquad \qquad\quad \forall q\in
W_h^0,\label{4.12}
\end{align}
 where
$$
\varphi_{\bu,p}(\bv)=\ell_\bu(\bv)+
\theta_p(\bv)+ s(Q_h\bu,\bv).
$$
\end{lemma}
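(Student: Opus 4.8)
The plan is to obtain the two error equations by subtracting the weak Galerkin scheme (\ref{3.3})--(\ref{3.4}) from the consistency identities satisfied by the $L^2$-projected exact solution $(Q_h\bu;{\cal Q}_h p)$. The momentum identity (\ref{4.11}) is essentially the content of Lemma \ref{lemma5.1} once the stabilizer is added in, and the constraint identity (\ref{4.12}) comes from the commuting property in Lemma \ref{lemma4.2} together with the relation $\nabla\cdot\bu=\lambda^{-1}p$.

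\textbf{Step 1 (momentum equation).} Since the exact solution is smooth enough — by (\ref{Regularity}) one has $\bu\in[H^{1+s}(\Omega)]^d$ and $p\in H^s(\Omega)$ with $s>\frac12$ — the pair $(\bu;p)$ satisfies the hypothesis (\ref{5.2}) of Lemma \ref{lemma5.1} with $\bw=\bu$, $\rho=p$, ${\bm\eta}=\bbf$, read off from the momentum equation in (\ref{model}). Lemma \ref{lemma5.1} then gives, for all $\bv\in V_h^0$,
$$
2(\mu\varepsilon_w(Q_h\bu),\varepsilon_w(\bv))_h+(\nabla_w\cdot\bv,{\cal Q}_h p)_h=(\bbf,\bv_0)+\ell_\bu(\bv)+\theta_p(\bv).
$$
Adding $s(Q_h\bu,\bv)$ to both sides and using the definitions of $a(\cdot,\cdot)$ and $b(\cdot,\cdot)$, the left-hand side becomes $a(Q_h\bu,\bv)+b(\bv,{\cal Q}_h p)$, so that
$$
a(Q_h\bu,\bv)+b(\bv,{\cal Q}_h p)=(\bbf,\bv_0)+\varphi_{\bu,p}(\bv),\qquad\forall\,\bv\in V_h^0.
$$
Subtracting the scheme equation (\ref{3.3}), namely $a(\bu_h,\bv)+b(\bv,p_h)=(\bbf,\bv_0)$, and using linearity together with the definitions (\ref{5.1})--(\ref{ph}) of $\be_h$ and $\zeta_h$ yields (\ref{4.11}).

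\textbf{Step 2 (constraint equation).} Fix $q\in W_h^0$, so $q|_T\in P_{k-1}(T)$ on each $T$. By the commuting identity (\ref{4.4}) of Lemma \ref{lemma4.2} and the fact that ${\cal Q}_h$ is the $L^2$-projection onto the piecewise space $P_{k-1}(T)$,
$$
b(Q_h\bu,q)=(\nabla_w\cdot Q_h\bu,q)_h=({\cal Q}_h(\nabla\cdot\bu),q)_h=(\nabla\cdot\bu,q)_h.
$$
Using the constraint $\nabla\cdot\bu=\lambda^{-1}p$ from (\ref{model}) and projecting $p$ in the same way gives $b(Q_h\bu,q)=\lambda^{-1}({\cal Q}_h p,q)_h=d({\cal Q}_h p,q)$. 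Subtracting the scheme equation (\ref{3.4}), $b(\bu_h,q)-d(p_h,q)=0$, yields $b(\be_h,q)-d(\zeta_h,q)=0$, which is (\ref{4.12}).

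\textbf{Main obstacle.} Essentially all the substantive work is already packaged into Lemma \ref{lemma5.1}; what remains here is careful bookkeeping. The delicate points are: (i) verifying that the exact solution genuinely has the regularity $[H^{1+\gamma}(\Omega)]^d\times H^1(\Omega)$, $\gamma>\frac12$, demanded by Lemma \ref{lemma5.1}, so that the boundary functionals $\ell_\bu$, $\theta_p$ and the telescoping identity $\sum_{T}\langle\bv_b,p\bn\rangle_{\partial T}=0$ are meaningful (the threshold $\gamma>\frac12$ being exactly what guarantees well-defined, single-valued traces); (ii) keeping the polynomial degrees of the projections matched, so that ${\cal Q}_h$ may be inserted or dropped freely against the piecewise-$P_{k-1}$ test functions $q$; and (iii) a sign check when casting the momentum equation of (\ref{model}) into the form (\ref{5.2}). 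With these in hand, (\ref{4.11})--(\ref{4.12}) follow by a direct subtraction of the scheme from the consistency identities.
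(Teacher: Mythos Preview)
Your proof is correct and follows essentially the same route as the paper: apply Lemma~\ref{lemma5.1} with $(\bw;\rho)=(\bu;p)$ and ${\bm\eta}=\bbf$, add the stabilizer to form $a(Q_h\bu,\bv)+b(\bv,{\cal Q}_h p)$, and subtract (\ref{3.3}) to get (\ref{4.11}); then use the commuting identity (\ref{4.4}) and the constraint $\nabla\cdot\bu=\lambda^{-1}p$ to show $b(Q_h\bu,q)-d({\cal Q}_h p,q)=0$ and subtract (\ref{3.4}) to get (\ref{4.12}). The only minor slip is that invoking (\ref{Regularity}) gives $p\in H^s$ with $s>\tfrac12$, whereas Lemma~\ref{lemma5.1} asks for $\rho\in H^1$; in the paper this is harmless because the error theorems that actually use Lemma~\ref{Lem5.2} assume $(\bu;p)\in [H^{k+1}]^d\times H^k$ with $k\ge 1$.
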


\begin{proof} Observe that the exact solution $(\bu;p)$ satisfies the equation
(\ref{5.2}) with ${\bm\eta} = \textbf{f}$. Thus, from Lemma
\ref{lemma5.1} we have
$$
2(\mu\varepsilon_w(Q_h\bu), \varepsilon_w(\bv))_h +(\nabla_w\cdot
\bv,{\cal Q}_h p)_h=(\textbf{f},\bv_0)+\ell_\bu (\bv)+\theta_p(\bv),
$$
which leads to
\begin{equation}\label{4.13}
a(Q_h\textbf{u},\textbf{v})+b(\textbf{v},{\cal
Q}_hp)=(\textbf{f},\textbf{v}_0)+\ell_\textbf{u}(\textbf{v})+
\theta_p(\textbf{v})+s(Q_h\textbf{u},\textbf{v}).
\end{equation}
Subtracting (\ref{3.3}) from (\ref{4.13}) gives the equation
(\ref{4.11}).

To derive (\ref{4.12}), using (\ref{4.4}) we have for any $q\in W_h$
\begin{equation}\label{4.14}
\begin{split}
(\nabla_w\cdot(Q_h\bu),q) - \lambda^{-1}({\cal Q}_h p,q) & = ({\cal
Q}_h(\nabla\cdot \bu),q)- \lambda^{-1}( {\cal Q}_h p,q)\\
&=(\nabla\cdot \bu,q)- \lambda^{-1}(p,q) =0,
\end{split}
\end{equation}
where we have used (\ref{var2}) in the second line. The difference
of (\ref{4.14}) and (\ref{3.4}) yields (\ref{4.12}).
\end{proof}

We are now in a position to derive an error estimate for the weak
Galerkin finite element approximation $(\textbf{u}_h;p_h)$.

\begin{theorem}
Let the solution of (\ref{var1})-(\ref{var2})
 be sufficiently smooth such that $(\bu; p) \in
[H^{k+1}(\Omega)]^d \times H^{k}(\Omega)$ for some $k\ge 1$. For the
weak Galerkin finite element solution $(\bu_h ; p_h ) \in V_h \times
W_h$ arising from (\ref{3.3})-(\ref{3.4}), we have
\begin{equation}\label{th1}
\3barQ_h\bu-\bu_h\3bar +\lambda^{- \frac{1}{2}}\|{\cal
Q}_hp-p_h\|+\3bar {\cal Q}_hp-p_h \3bar_0\leq
Ch^k(\|\bu\|_{k+1}+\|p\|_k),
\end{equation}
where $C$ is a generic constant independent of $(\bu; p)$.
Consequently, the following error estimate holds true
\begin{equation}\label{th1-newversion}
\3bar \bu-\bu_h\3bar +\lambda^{- \frac{1}{2}}\|p-p_h\|+\3bar p-p_h
\3bar_0\leq Ch^k(\|\bu\|_{k+1}+\|p\|_k).
\end{equation}
\end{theorem}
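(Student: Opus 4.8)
The plan is to establish the discrete $H^1$-error estimate \eqref{th1} via the standard saddle-point machinery: combine the coercivity of $a(\cdot,\cdot)$ on $V_h^0$ (Lemma \ref{lemmacoerv}), the \emph{inf-sup} condition for $b(\cdot,\cdot)$ (Lemma \ref{lemma-infsup}), and the error equations \eqref{4.11}--\eqref{4.12} from Lemma \ref{Lem5.2}, together with bounds on the right-hand side functional $\varphi_{\bu,p}(\bv)=\ell_\bu(\bv)+\theta_p(\bv)+s(Q_h\bu,\bv)$. The estimate \eqref{th1-newversion} then follows from \eqref{th1} by the triangle inequality and standard $L^2$-projection approximation properties, namely $\3bar \bu - Q_h\bu\3bar \le Ch^k\|\bu\|_{k+1}$, $\|p-{\cal Q}_h p\|\le Ch^k\|p\|_k$, and a corresponding bound for $\3bar p - {\cal Q}_h p\3bar_0$, each of which is a routine consequence of trace and approximation inequalities on shape-regular polygonal elements.

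The key steps, in order, are as follows. First I would bound the three pieces of $\varphi_{\bu,p}(\bv)$. For $\ell_\bu(\bv)$ and $\theta_p(\bv)$, each term is of the form $\sum_T \langle \bv_0-\bv_b, (\text{projection error})\,\bn\rangle_{\partial T}$; applying Cauchy--Schwarz on each $\partial T$, inserting $Q_b$ (since $\langle Q_b\bv_0 - \bv_0, \cdot\rangle$ contributes controllably) or using $\|\bv_0-\bv_b\|_{\partial T} \le \|Q_b\bv_0 - \bv_b\|_{\partial T} + \|\bv_0 - Q_b\bv_0\|_{\partial T}$, then using the trace inequality and approximation estimates for $\varepsilon(\bu)-\textbf{Q}_h\varepsilon(\bu)$ and $p-{\cal Q}_h p$, one gets $|\ell_\bu(\bv)| + |\theta_p(\bv)| \le Ch^k(\|\bu\|_{k+1}+\|p\|_k)\,\3bar\bv\3bar$. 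For $s(Q_h\bu,\bv)$, since $Q_b(Q_0\bu) - Q_b\bu = Q_b(Q_0\bu - \bu)$ is small, a direct Cauchy--Schwarz in the $s$-inner product plus trace/approximation bounds gives $|s(Q_h\bu,\bv)| \le Ch^k\|\bu\|_{k+1}\,\3bar\bv\3bar$. Hence $|\varphi_{\bu,p}(\bv)| \le Ch^k(\|\bu\|_{k+1}+\|p\|_k)\,\3bar\bv\3bar$ for all $\bv\in V_h^0$.

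Second, I would run the abstract saddle-point estimate. Taking $\bv=\be_h$ in \eqref{4.11} and $q=\zeta_h$ in \eqref{4.12} and subtracting eliminates the $b$-terms, giving $a(\be_h,\be_h) + d(\zeta_h,\zeta_h) = \varphi_{\bu,p}(\be_h)$; with coercivity and $d(\zeta_h,\zeta_h)=\lambda^{-1}\|\zeta_h\|^2\ge 0$ this yields $\alpha_1\3bar\be_h\3bar^2 + \lambda^{-1}\|\zeta_h\|^2 \le Ch^k(\|\bu\|_{k+1}+\|p\|_k)\,\3bar\be_h\3bar$, whence $\3bar\be_h\3bar \le Ch^k(\|\bu\|_{k+1}+\|p\|_k)$ and $\lambda^{-1/2}\|\zeta_h\| \le Ch^k(\|\bu\|_{k+1}+\|p\|_k)$. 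To control $\3bar\zeta_h\3bar_0$, I use the \emph{inf-sup} condition: from \eqref{4.11}, $b(\bv,\zeta_h) = \varphi_{\bu,p}(\bv) - a(\be_h,\bv)$, so $\beta\3bar\zeta_h\3bar_0 \le \sup_{\bv\ne 0} b(\bv,\zeta_h)/\3bar\bv\3bar \le |\varphi_{\bu,p}|_{\text{bound}} + \alpha_2\3bar\be_h\3bar \le Ch^k(\|\bu\|_{k+1}+\|p\|_k)$, using the continuity half of \eqref{EQ:coercivity}. Collecting the three bounds proves \eqref{th1}.

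The main obstacle I anticipate is the bound on $\ell_\bu(\bv)$: it requires a trace-type inequality controlling $\|\bv_0-\bv_b\|_{\partial T}$ and an approximation estimate for $\|(\varepsilon(\bu)-\textbf{Q}_h\varepsilon(\bu))\bn\|_{\partial T}$ that are uniform over shape-regular \emph{polygonal} elements, where one cannot invoke a reference-element mapping argument; this is exactly the kind of technical estimate the authors flag (cf.\ the remark about inequality \eqref{EQ:July29:815} in Lemma \ref{korninqua} and the supporting tools promised in Section 9). A secondary subtlety is the $k=1$ case, where $\bv_b$ lives in the rigid-motion trace space $P_{RM}(e)$ rather than $[P_0(e)]^d$, so the $Q_b$-projection arguments must be stated so as to cover both cases; since $P_{RM}(e)\subset P_1(e)$ contains the constants, the polynomial-preservation properties needed for the approximation estimates still hold, and the argument goes through uniformly.
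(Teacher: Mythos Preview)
Your proposal is correct and follows essentially the same route as the paper's proof: it invokes Lemma~\ref{lem8.2} (whose content is exactly your first step bounding $\ell_\bu$, $\theta_p$, and $s(Q_h\bu,\cdot)$ via Cauchy--Schwarz, trace, and approximation estimates together with the Korn-type inequality \eqref{EQ:July29:815}), then tests the error equations with $(\be_h,\zeta_h)$ and uses coercivity to control $\3bar\be_h\3bar$ and $\lambda^{-1/2}\|\zeta_h\|$, and finally recovers $\3bar\zeta_h\3bar_0$ from the inf-sup condition and \eqref{4.11}. You have also correctly identified the principal technical point, namely that bounding $\sum_T h_T^{-1}\|\bv_0-\bv_b\|_{\partial T}^2$ by $\3bar\bv\3bar^2$ hinges on Lemma~\ref{korninqua}.
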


\begin{proof} By choosing $\bv=\be_h$ in
(\ref{4.11}) and $q = \zeta_h$ in (\ref{4.12}) we have
$$
a(\textbf{e}_h,\textbf{e}_h)+ {\lambda}^{-1}\|
\zeta_h\|^2=\varphi_{\textbf{u},p}(\textbf{e}_h).
$$
By applying Lemma \ref{lem8.2} to the term
$\varphi_{\textbf{u},p}(\textbf{e}_h)$ we arrive at
\begin{equation*}
a(\textbf{e}_h,\textbf{e}_h) + {\lambda}^{-1}\|\zeta_h\|^2  \leq
Ch^k(\|\textbf{u}\|_{k+1}+\|p\|_k)\3bar \textbf{e}_h\3bar.
\end{equation*}
Next, using Lemma \ref{lemmacoerv} and the above estimate we obtain
\begin{eqnarray}\label{eh}
\alpha_1\3bar\textbf{e}_h\3bar^2+ {\lambda}^{-1}\|\zeta_h\|^2 \leq
Ch^k(\|\textbf{u}\|_{k+1}+\|p\|_k)\3bar \textbf{e}_h\3bar.
\end{eqnarray}

To derive an error estimate for the ``pressure'' variable $p$ in a
$\lambda$-independent norm, we use the {\em inf-sup} condition
(\ref{EQ:inf-sup}) to obtain
\begin{equation}\label{EQ:July24:001}
\beta \3bar \zeta_h \3bar_0 \leq \sup_{\bv\in V_h^0, \bv\neq 0}
\frac{b(\bv,\zeta_h)}{\3bar \bv \3bar}.
\end{equation}
From (\ref{4.11})
 \begin{equation*}
b(\bv,\zeta_h)=-a(\be_h,\bv)+ \varphi_{\bu,p}(\bv).
\end{equation*}
Thus, it follows from Lemma \ref{lemmacoerv}, the error estimate
(\ref{eh}), and Lemma \ref{lem8.2} that
\begin{eqnarray*}
|b(\bv,\zeta_h)| &\leq & \alpha_2 \3bar \be_h \3bar \ \3bar
\bv\3bar + |\varphi_{\bu,p}(\bv)| \\
&\leq & Ch^k(\|\bu\|_{k+1}+\|p\|_k)\ \3bar\bv\3bar.
\end{eqnarray*}
Substituting the above estimate into (\ref{EQ:July24:001}) yields
\begin{equation}\label{EQ:July24:002}
\3bar \zeta_h \3bar_0 \leq Ch^k(\|\bu\|_{k+1}+\|p\|_k).
\end{equation}
Combining (\ref{eh}) with (\ref{EQ:July24:002}) gives rise to the
error estimate (\ref{th1}). Finally, (\ref{th1-newversion}) stems
from the usual triangle inequality, the estimate (\ref{th1}) and the
error estimate for $L^2$ projections.
\end{proof}

\section{ Error Estimate in $L^2$} As usual, we use the duality argument
to derive an $L^2$ error estimate for the weak Galerkin finite
element method. To this end, consider the problem of seeking
$\bPhi\in [H^1(\Omega)]^d$ and $\xi\in L^2_0(\Omega)$ satisfying
\begin{eqnarray}\label{7.1}
-\nabla\cdot (2\mu \varepsilon(\bPhi)+\xi I)&=&\textbf{e}_0,\qquad \text{in} \ \Omega,\\
\nabla\cdot \bPhi - \lambda^{-1} \xi &=& 0, \ \qquad\text{in} \ \Omega,\label{psi}\\
\bPhi&=&\textbf{0}, \ \qquad\text{on} \ \Gamma.\label{bdy}
\end{eqnarray}
Assume that the dual problem (\ref{7.1})-(\ref{bdy}) has the
$[H^{1+s} (\Omega)]^d \times  H^s (\Omega)$-regularity with $\frac12
<s\leq 1$ in the sense that the solution $(\bPhi; \xi)\in [H^{1+s}
(\Omega)]^d \times  H^s (\Omega)$ and satisfies the following a
priori estimate:
\begin{equation}\label{7.2}
\|\bPhi\|_{s+1}+ \|\xi\|_{s}\leq C\|\textbf{e}_0 \|.
\end{equation}

\begin{theorem} Assume that the solutions of (\ref{model}) are sufficiently
smooth such that $(\bu;p) \in [H ^{k+1} (\Omega)]^d\times
H^k(\Omega)$ for some integer $k\geq 1$. Let $(\bu_h; p_h) \in V_h
\times W_h$ be the corresponding weak Galerkin finite element
solution arising from (\ref{3.3})-(\ref{3.4}). Then, under the
regularity assumption (\ref{7.2}), there exists a constant $C$, such
that
\begin{equation}\label{7.3}
\|Q_0\bu-\bu_0\|\leq Ch^{k+s}\big(\|\bu\|_{k+1}+\|p\|_k\big).
\end{equation}
Moreover, it follows from the triangle inequality and the error
estimate for the $L^2$ projection that
\begin{equation}\label{7.3-new}
\|\bu-\bu_0\|\leq Ch^{k+s}\big(\|\bu\|_{k+1}+\|p\|_k\big).
\end{equation}
\end{theorem}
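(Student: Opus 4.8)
The approach is the standard duality argument, combining the dual problem \eqref{7.1}--\eqref{bdy} with the error equations of Lemma~\ref{Lem5.2} and the identity of Lemma~\ref{lemma5.1}. Since the dual solution $(\bPhi;\xi)$ satisfies $2\nabla\cdot(\mu\varepsilon(\bPhi))+\nabla\xi=-\be_0$ in $\Omega$, Lemma~\ref{lemma5.1} applied with $\bw=\bPhi$, $\rho=\xi$, ${\bm\eta}=\be_0$ gives, for every $\bv\in V_h^0$,
\[
2(\mu\varepsilon_w(Q_h\bPhi),\varepsilon_w(\bv))_h+(\nabla_w\cdot\bv,{\cal Q}_h\xi)_h=(\be_0,\bv_0)+\ell_{\bPhi}(\bv)+\theta_\xi(\bv).
\]
I would set $\bv=\be_h\in V_h^0$ so that $\|\be_0\|^2$ appears, then rewrite the left side via $a(Q_h\bPhi,\be_h)-s(Q_h\bPhi,\be_h)$ and $b(\be_h,{\cal Q}_h\xi)$, and substitute the error equations \eqref{4.11}--\eqref{4.12}: test \eqref{4.11} with $\bv=Q_h\bPhi$ (which lies in $V_h^0$ since $\bPhi|_\Gamma=0$) and \eqref{4.12} with $q={\cal Q}_h\xi$ (which lies in $W_h^0$ since $\xi$ has zero mean). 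By Lemma~\ref{lemma4.2} and the constraint \eqref{psi}, $\nabla_w\cdot(Q_h\bPhi)={\cal Q}_h(\nabla\cdot\bPhi)=\lambda^{-1}{\cal Q}_h\xi$, so $b(Q_h\bPhi,\zeta_h)=\lambda^{-1}({\cal Q}_h\xi,\zeta_h)$ cancels exactly against $d(\zeta_h,{\cal Q}_h\xi)=\lambda^{-1}(\zeta_h,{\cal Q}_h\xi)$ coming from \eqref{4.12}. The surviving terms then collapse (using the definition of $\varphi_{\bu,p}$) to
\[
\|\be_0\|^2=\ell_\bu(Q_h\bPhi)+\theta_p(Q_h\bPhi)+s(Q_h\bu,Q_h\bPhi)-s(Q_h\bPhi,\be_h)-\ell_{\bPhi}(\be_h)-\theta_\xi(\be_h).
\]

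It then remains to bound the six terms. The three terms carrying a factor $\be_h$ I would control by Cauchy--Schwarz and the discrete $H^1$ estimate $\3bar\be_h\3bar\le Ch^k(\|\bu\|_{k+1}+\|p\|_k)$ of \eqref{th1}, together with companion bounds $|s(Q_h\bPhi,\bv)|+|\ell_{\bPhi}(\bv)|+|\theta_\xi(\bv)|\le Ch^{s}(\|\bPhi\|_{1+s}+\|\xi\|_s)\,\3bar\bv\3bar$ obtained from trace and inverse inequalities and the $L^2$-orthogonality of the projections $Q_0,Q_b,\textbf{Q}_h,{\cal Q}_h$ on each element (these are fractional-order estimates, since $\varepsilon(\bPhi)$ and $\xi$ are only $H^s$). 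The remaining three terms $\ell_\bu(Q_h\bPhi)$, $\theta_p(Q_h\bPhi)$, $s(Q_h\bu,Q_h\bPhi)$ I would bound directly by $Ch^{k+s}(\|\bu\|_{k+1}+\|p\|_k)\|\bPhi\|_{1+s}$, where on each element boundary one factor $h^k$ comes from the approximation of $\varepsilon(\bu)$ by $\textbf{Q}_h\varepsilon(\bu)$ and of $p$ by ${\cal Q}_h p$, while the extra $h^s$ comes from approximating $\bPhi$ by $Q_0\bPhi$ and $Q_b\bPhi$. Collecting these yields $\|\be_0\|^2\le Ch^{k+s}(\|\bu\|_{k+1}+\|p\|_k)(\|\bPhi\|_{1+s}+\|\xi\|_s)$, and the regularity estimate \eqref{7.2}, $\|\bPhi\|_{1+s}+\|\xi\|_s\le C\|\be_0\|$, gives \eqref{7.3}; the bound \eqref{7.3-new} then follows from the triangle inequality and the $L^2$-approximation property of $Q_0$.

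I expect the main obstacle to be the sharp estimation of the boundary functionals $\ell_\bu(Q_h\bPhi)$, $\theta_p(Q_h\bPhi)$, $\ell_{\bPhi}(\be_h)$, $\theta_\xi(\be_h)$ on general shape-regular polytopal meshes: one must extract the extra half-powers of $h$ (beyond a crude Cauchy--Schwarz bound) by inserting the $L^2$-projections inside the face integrals and exploiting their orthogonality, then invoking the fractional-order trace and approximation inequalities of Section~9 (cf. Lemma~\ref{lem8.2}). A secondary point is that every constant must be independent of $\lambda$; this is automatic here, because the only $\lambda$-dependent quantities, the $\lambda^{-1}$-weighted pairings of $\zeta_h$ with ${\cal Q}_h\xi$, cancel identically in the derivation above and are never estimated.
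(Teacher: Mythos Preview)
Your proposal is correct and follows essentially the same route as the paper: the identity $\|\be_0\|^2=\varphi_{\bu,p}(Q_h\bPhi)-\varphi_{\bPhi,\xi}(\be_h)$ is derived exactly as in the paper (Lemma~\ref{lemma5.1} with $\bv=\be_h$, the error equations tested with $Q_h\bPhi$ and ${\cal Q}_h\xi$, and the cancellation $b(Q_h\bPhi,\zeta_h)=d({\cal Q}_h\xi,\zeta_h)=b(\be_h,{\cal Q}_h\xi)$ coming from \eqref{4.4} and \eqref{psi}); the six terms are then bounded just as you describe, with the paper using Lemma~\ref{lem8.2} (at fractional order $r=s$) for the three $\be_h$-terms and, for $\ell_\bu(Q_h\bPhi)$ and $\theta_p(Q_h\bPhi)$, precisely the device you anticipate of replacing $Q_b\bPhi$ by $\bPhi$ via the orthogonality of $Q_b$ and the boundary condition $\bPhi|_\Gamma=0$ so that the test factor becomes $Q_0\bPhi-\bPhi$.
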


\begin{proof} Note that the solution $(\bPhi;\xi)$ of
(\ref{7.1})-(\ref{bdy}) satisfies (\ref{5.2}) with ${\bm\eta} =
\be_0$. Thus, using Lemma \ref{lemma5.1}, namely the identity
(\ref{erreqn}), we have
\begin{equation}\label{EQ:July25:100}
2(\mu \varepsilon_w (Q_h \bPhi),\varepsilon_w (\bv))_h +(\nabla_w
\cdot \bv,{\cal Q}_h \xi)_h = (\be_0, \bv_0) +\ell_{\bPhi}(\bv) +
\theta_\xi(\bv)
\end{equation}
for all $\bv\in V_h^0$.

By choosing $\bv=\be_h$ in (\ref{EQ:July25:100}) we obtain
\begin{equation}\label{7.4}
\|\be_0\|^2=a(Q_h\bPhi, \textbf{e}_h)+b( \textbf{e}_h,{\cal Q}_h
\xi)-\varphi_{\bPhi,\xi}(\textbf{e}_h),
\end{equation}
where
$$
\varphi_{\bPhi,\theta}(\be_h)=\theta_\xi(\be_h)+\ell_{\bPhi}(\be_h)
+s(Q_h\bPhi, \be_h).
$$
Using (\ref{4.12}) we have
\begin{equation}\label{EQ:August02:001}
b(\textbf{e}_h,{\cal Q}_h\xi)-d({\cal Q}_h\xi,\zeta_h)=0.
\end{equation}
From (\ref{4.4}) and (\ref{psi})
\begin{equation*}
\begin{split}
b(Q_h\bPhi,\zeta_h)&=\sum_{T\in {\cal T}_h}(\nabla_w \cdot Q_h\bPhi,\zeta_h)_T\\
&= \sum_{T\in {\cal T}_h} ( {\cal Q}_h(\nabla \cdot\bPhi),\zeta_h)_T \\
&=d({\cal Q}_h\xi,\zeta_h).
\end{split}
\end{equation*}
Thus, by substituting the above into (\ref{EQ:August02:001})
$$
b(\textbf{e}_h,{\cal Q}_h\xi) = b(Q_h\bPhi,\zeta_h).
$$
Combining the last equation with (\ref{7.4}) we obtain
\begin{equation*}
\|\be_0\|^2=a(Q_h\bPhi, \be_h)+ b(Q_h\bPhi,\zeta_h)
-\varphi_{\bPhi,\xi}(\be_h),
\end{equation*}
which, together with the error equation (\ref{4.11}), leads to
\begin{equation}\label{7.7}
\|\be_0\|^2= \varphi_{\bu,p}(Q_h\bPhi)-\varphi_{\bPhi,\xi}(\be_h).
\end{equation}

The rest of the proof shall deal with the two terms on the
right-hand side of (\ref{7.7}). The second term
$\varphi_{\bar{\textbf{u}},q}(\textbf{e}_h)$ can be estimated by
using Lemma \ref{lem8.2}, the error estimate (\ref{th1}), and the
regularity assumption (\ref{7.2}) as follows
\begin{equation}\label{7.8}
\begin{split}
|\varphi_{\bPhi,\xi}(\be_h)|&\leq Ch^s(\|\bPhi\|_{s+1}+\|\xi\|_s)\
\3bar \be_h\3bar\\
&\leq Ch^{k+s}(\|\bu\|_{k+1}+\|p\|_k)\ \|\be_0\|.
\end{split}
\end{equation}
The estimate for the first term $\varphi_{\bu,p}(Q_h\bPhi)$ is a bit
complicated, for which we detail as follows.

(i) For the term $s(Q_h\bu, Q_h\bPhi)$ in
$\varphi_{\bu,p}(Q_h\bPhi)$, from the Cauchy-Schwarz inequality,
(\ref{A4}) and (\ref{A1}), we have
\begin{equation}\label{lerr1.000}
\begin{split}
|s(Q_h\bu, Q_h\bPhi)|=&\Big|\sum_{T\in {\cal T}_h}h_T^{-1}\langle
Q_b(Q_0\bu)-Q_b\bu,Q_b(Q_0\bPhi)-Q_b\bPhi\rangle_{\partial
T}\Big|\\
\leq & \Big(\sum_{T\in {\cal T}_h}
h_T^{-1}\|Q_0\bu-\bu\|^2_{\partial
T}\Big)^{\frac{1}{2}}\Big(\sum_{T\in {\cal T}_h}
h_T^{-1}\|Q_0\bPhi-\bPhi\|^2_{\partial T}\Big)^{\frac{1}{2}}\\
\leq & Ch^{k+s}\|\bu\|_{k+1} \ \|\bPhi\|_{s+1}.
\end{split}
\end{equation}

(ii) For the term $\ell_\bu(Q_h\bPhi)$, we have from the
orthogonality of $Q_b$ and the boundary condition (\ref{bdy}) that
\begin{equation}\label{lerr1}
\begin{split}
&2\mu\sum_{T\in {\cal T}_h} \langle Q_b\bPhi-\bPhi,(\varepsilon(
\bu)-\textbf{Q}_h(\varepsilon(
\bu)))\bn\rangle_{\partial T}\\
=&2\mu\sum_{T\in {\cal T}_h} \langle Q_b\bPhi-\bPhi,\varepsilon(
\bu)\bn\rangle_{\partial T} =0.
\end{split}
\end{equation}
Thus, it follows from the Cauchy-Schwarz inequality, (\ref{A4}),
(\ref{A1}) and (\ref{A2}) that
\begin{equation}\label{lerr2}
\begin{split}
|\ell_\bu(Q_h\bPhi)| & =\left|2\mu \sum_{T\in {\cal T}_h} \langle
Q_0\bPhi- Q_b\bPhi,(\varepsilon( \bu)-\textbf{Q}_h
\varepsilon(\bu))\bn\rangle_{\partial
T}\right|\\
& =\left|2\mu \sum_{T\in {\cal T}_h} \langle Q_0\bPhi-
\bPhi,(\varepsilon( \bu)-\textbf{Q}_h
\varepsilon(\bu))\bn\rangle_{\partial
T}\right|\\
& \leq  2\mu\Big(\sum_{T\in {\cal T}_h} h_T \|\varepsilon(
\bu)-\textbf{Q}_h\varepsilon( \bu)\|^2_{\partial
T}\Big)^{\frac{1}{2}}\Big(\sum_{T\in {\cal T}_h} h_T^{-1}\|Q_0\bPhi-
\bPhi\|^2_{\partial
T}\Big)^{\frac{1}{2}}\\
& \leq Ch^{k+s}\|\bu\|_{k+1} \ \|\bPhi\|_{s+1}.
\end{split}
\end{equation}

(iii) As to the term  $\theta_p (Q_h \bPhi)$, we again use the
orthogonality of $Q_b$ and the boundary condition (\ref{bdy})
combined with the Cauchy-Schwarz inequality, (\ref{A4}), (\ref{A1})
and (\ref{A3}) to obtain
\begin{equation}\label{theerr1}
\begin{split}
|\theta_p (Q_h \bPhi)|& =\left|\sum_{T\in {\cal T}_h}
  \langle Q_0\bPhi- Q_b\bPhi,(p-{\cal Q}_hp)\bn\rangle_{\partial
  T}\right|\\
  &=\left|\sum_{T\in {\cal T}_h}
  \langle Q_0\bPhi- \bPhi,(p-{\cal Q}_hp)\bn\rangle_{\partial T}\right|\\
&\leq \Big(\sum_{T\in {\cal T}_h} h_T \|p-{\cal Q}_hp\|^2_{\partial
T}\Big)^{\frac{1}{2}}\Big(\sum_{T\in {\cal T}_h} h_T^{-1}\|Q_0\bPhi-
\bPhi\|^2_{\partial
T}\Big)^{\frac{1}{2}}\\
 &\leq  Ch^{k+s}\|p\|_{k} \ \|\bPhi\|_{s+1}.
\end{split}
\end{equation}

By combining the three estimates (\ref{lerr1.000}), (\ref{lerr2}),
and (\ref{theerr1}) we arrive at
\begin{equation}\label{7.9}
\begin{split}
|\varphi_{\bu,p}(Q_h\bPhi)|&\leq
Ch^{k+s}(\|\bu\|_{k+1}+\|p\|_k)\|\bPhi\|_{s+1}\\
&\leq Ch^{k+s}(\|\bu\|_{k+1}+\|p\|_k)\|\be_0\|,
\end{split}
\end{equation}
where we have used the regularity assumption (\ref{7.2}) in the
second line. Finally, by substituting (\ref{7.8}) and (\ref{7.9})
into (\ref{7.7}) we obtain the desired error estimate of
(\ref{7.3}). This completes the derivation of the $L^2$ error
estimate.
\end{proof}

\section{Supporting Tools and Inequalities} In this section, we
present some technical inequalities that support the error analysis
established in previous sections.

Recall that ${\cal T}_h$ is a shape-regular finite element partition
of $\Omega$. There exists a constant $C>0$ such that, for any side
$e\subset T$ with $T\in {\cal T}_h$, the following trace inequality
holds true \cite{wy1202}
\begin{equation}\label{A4}
\|\phi\|_e^2\leq C\big(h_T^{-1}\|\phi\|_T^2+h_T\|\nabla
\phi\|_T^2\big), \qquad\forall \phi\in H^1(T),
\end{equation}
where $h_T$ is the size of $T$. Furthermore, in the polynomial space
$P_j(T)$, $j\ge 0$, we have from the inverse inequality that
\begin{equation}\label{Aa}
\|\phi\|_e^2\leq C h_T^{-1}\|\phi\|_T^2,\qquad \forall \phi\in
P_j(T), \ T\in \T_h.
\end{equation}

\begin{lemma}\label{lemmaA1} \cite{wy1202}  Assume
that  ${\cal T}_h$ is a finite element partition of $\Omega$
satisfying the shape regularity assumption as defined in
\cite{wy1202}. Let $k\ge 1$ be the order of the finite element
method and $1 \leq r \leq k$. Let $\bw\in [H^{r+1} (\Omega)]^ d$,
$\rho \in H^r (\Omega)$ and $0 \leq m \leq 1$. There holds
\begin{align}\label{A1}
\sum_{T\in{\cal
T}_h}h_T^{2m}\|\bw-Q_0\bw\|^2_{T,m}&\leq  C
h^{2(r+1)}\|\bw\|^2_{r+1},\\
\sum_{T\in{\cal T}_h}h_T^{2m}\|
\varepsilon(\bw)-\textbf{Q}_h\varepsilon(\bw)\|^2_{T,m}&\leq
Ch^{2r}\|\bw\|^2_{r+1},\label{A2}\\
\sum_{T\in{\cal T}_h}h_T^{2m}\|\rho-{\cal Q}_h\rho\|^2_{T,m}&\leq
Ch^{2r}\|\rho\|^2_{r}.\label{A3}
\end{align}
\end{lemma}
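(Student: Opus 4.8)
The plan is to reduce the global estimates (\ref{A1})--(\ref{A3}) to a single local polynomial-approximation estimate and then sum over $T\in\T_h$. Since each of $Q_0$, $\mathbf Q_h$, ${\cal Q}_h$ acts, componentwise, as the $L^2(T)$-orthogonal projection onto a space of polynomials of a fixed degree, the central ingredient is the following: for the $L^2(T)$-projection $\Pi_\ell$ onto $P_\ell(T)$, any $0\le m\le 1$, and any $0\le j\le\ell$,
\[
\|\phi-\Pi_\ell\phi\|_{m,T}\le Ch_T^{\,j+1-m}\,|\phi|_{j+1,T},\qquad\forall\,\phi\in H^{j+1}(T),
\]
with $C$ depending only on the shape-regularity parameters of $\T_h$. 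Granting this, (\ref{A1}) follows by applying it componentwise to $\phi=w_i$ with $\ell=k$ and $j=r$ (admissible since $r\le k$): then $h_T^{m}\|w_i-Q_0w_i\|_{m,T}\le Ch_T^{r+1}|w_i|_{r+1,T}\le Ch^{r+1}|w_i|_{r+1,T}$, and squaring, summing over $i$ and over $T$, and using $\sum_{T}|\bw|_{r+1,T}^2=|\bw|_{r+1,\Omega}^2\le\|\bw\|_{r+1}^2$ gives the claim. Estimate (\ref{A2}) follows by applying the local bound componentwise to $\phi=\varepsilon(\bw)_{ij}$ with $\ell=k-1$ and $j=r-1$, together with $|\varepsilon(\bw)_{ij}|_{r,T}\le C|\bw|_{r+1,T}$; and (\ref{A3}) by applying it to $\phi=\rho$ with $\ell=k-1$ and $j=r-1$. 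In each case the powers of $h_T$ combine to exactly $h_T^{r+1}$ or $h_T^{r}$ before summation, which after the sum produces the asserted $h^{2(r+1)}$ or $h^{2r}$.

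To prove the local estimate, for $m=0$ I use the best-approximation property of the $L^2(T)$-projection followed by the Bramble--Hilbert lemma: $\|\phi-\Pi_\ell\phi\|_{0,T}\le\inf_{q\in P_\ell(T)}\|\phi-q\|_{0,T}\le\|\phi-\widetilde q\|_{0,T}\le Ch_T^{\,j+1}|\phi|_{j+1,T}$, where $\widetilde q\in P_j(T)\subset P_\ell(T)$ is the averaged Taylor polynomial of $\phi$ of degree $j$ on $T$. For $m=1$, since $\Pi_\ell\widetilde q=\widetilde q$ we have $\Pi_\ell\phi-\widetilde q=\Pi_\ell(\phi-\widetilde q)$, and therefore, by the triangle inequality, the $L^2(T)$-stability of $\Pi_\ell$, and the standard inverse inequality for polynomials on a shape-regular element,
\[
|\phi-\Pi_\ell\phi|_{1,T}\le|\phi-\widetilde q|_{1,T}+Ch_T^{-1}\|\phi-\widetilde q\|_{0,T}\le Ch_T^{\,j}|\phi|_{j+1,T},
\]
where the last step uses the Bramble--Hilbert bounds $|\phi-\widetilde q|_{1,T}\le Ch_T^{\,j}|\phi|_{j+1,T}$ and $\|\phi-\widetilde q\|_{0,T}\le Ch_T^{\,j+1}|\phi|_{j+1,T}$. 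This settles $m=0$ and $m=1$; for a non-integer $m\in(0,1)$ one simply interpolates between these two cases, though in fact only $m\in\{0,1\}$ is used in the subsequent error analysis, so this refinement can be omitted.

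The only genuinely delicate issue is that the elements $T$ are arbitrary shape-regular polygons/polyhedra, not affine images of a single reference cell, so the Bramble--Hilbert lemma, the polynomial inverse inequality, and the $h_T$-explicit scaling all have to be justified in that setting rather than pulled back to a reference element. This is precisely the machinery developed in \cite{wy1202}: the shape-regularity assumption guarantees that every $T$ is star-shaped with respect to a ball of radius comparable to $h_T$ (equivalently, $T$ is a union of a uniformly bounded number of simplices, each of diameter $\sim h_T$ and each containing a ball of radius $\sim h_T$), which makes the Sobolev integral representation underlying the averaged Taylor polynomial valid with constants depending only on the shape-regularity parameters, and likewise delivers $|q|_{1,T}\le Ch_T^{-1}\|q\|_{0,T}$ for $q\in P_\ell(T)$. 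I would therefore carry out the Bramble--Hilbert argument either directly on $T$ via its star-shaped property or piecewise on the constituent simplices; this is the step requiring the most care. Once the local estimate is in hand, the summation over $\T_h$ and the passage from element seminorms to the global Sobolev norm are entirely routine.
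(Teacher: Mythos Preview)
The paper does not actually prove this lemma; it is stated with a citation to \cite{wy1202} and no proof is given in the present paper. Your proposal is a correct and complete sketch of the standard argument---best approximation of the $L^2$-projection, Bramble--Hilbert via averaged Taylor polynomials on star-shaped domains, and an inverse inequality to pass from $m=0$ to $m=1$---and you have correctly identified that the only nontrivial point is making the constants uniform over shape-regular polytopal elements, which is exactly the machinery supplied in \cite{wy1202}. So your approach is essentially the one behind the cited result; there is nothing to correct.
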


\subsection{Korn's inequality}
Korn's inequality is a fundamental tool in the study of elasticity
equations. The inequality can be found in many existing literature,
see \cite{Ciarlet, Duvaut, Nitsche, b2003} for example. For
convenience, we provide a summary here for this useful inequality.

\begin{theorem}(Korn's Inequality) Assume that the domain $\Omega$
is open bounded with Lipschitz continuous boundary. Then, there
exists a constant $C$ such that
\begin{equation}\label{EQ:KornsInequality}
\|\nabla\bv\|_{0} \leq C \left (\|\varepsilon(\bv)\|_{0}+\|\bv\|_{0}
\right),\qquad \forall \ \bv\in [H^1(\Omega)]^d.
\end{equation}
\end{theorem}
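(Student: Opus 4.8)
The plan is to prove Korn's inequality \eqref{EQ:KornsInequality} by the standard compactness (Peetre--Tartar) argument, reducing it to a statement about the kernel of the strain operator. First I would recall the algebraic identity relating second derivatives of $\bv$ to first derivatives of $\varepsilon(\bv)$, namely $\partial_k\partial_j v_i = \partial_j \varepsilon_{ik}(\bv) + \partial_k\varepsilon_{ij}(\bv) - \partial_i\varepsilon_{jk}(\bv)$, which shows that $\bv\in L^2$ with $\varepsilon(\bv)\in L^2$ and all mixed second derivatives a priori only in $H^{-1}$ actually forces $\nabla\bv\in L^2$; this is the hard analytic ingredient (a consequence of Ne\v{c}as' lemma / the fact that $\{f\in H^{-1}:\nabla f\in H^{-1}\}=L^2$ on Lipschitz domains). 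Granting this, the map $\bv\mapsto (\varepsilon(\bv),\bv)$ from $[H^1(\Omega)]^d$ into $[L^2(\Omega)]^{d\times d}\times [L^2(\Omega)]^d$ has closed range and trivial kernel, so by the open mapping theorem (or the Peetre--Tartar lemma) the norm $\|\varepsilon(\bv)\|_0+\|\bv\|_0$ is equivalent to $\|\bv\|_1$, which is exactly \eqref{EQ:KornsInequality}.

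Concretely, I would argue by contradiction: suppose no such $C$ exists, so there is a sequence $\bv_n\in[H^1(\Omega)]^d$ with $\|\bv_n\|_1 = 1$ but $\|\varepsilon(\bv_n)\|_0 + \|\bv_n\|_0 \to 0$. By the Rellich--Kondrachov theorem, $[H^1(\Omega)]^d$ embeds compactly into $[L^2(\Omega)]^d$, so after passing to a subsequence $\bv_n\to\bv$ strongly in $[L^2(\Omega)]^d$; but $\|\bv_n\|_0\to 0$ forces $\bv=\mathbf 0$. On the other hand $\varepsilon(\bv_n)\to \mathbf 0$ in $[L^2(\Omega)]^{d\times d}$, so $\bv_n$ is Cauchy in the norm $\|\varepsilon(\cdot)\|_0+\|\cdot\|_0$; using the second-derivative identity above together with Ne\v{c}as' estimate, this norm controls $\|\nabla\bv_n\|_0$ up to lower-order terms, whence $\bv_n$ is actually Cauchy in $[H^1(\Omega)]^d$ and converges strongly there to $\bv=\mathbf 0$. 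That contradicts $\|\bv_n\|_1=1$, completing the proof.

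The main obstacle is precisely the regularity step: showing that $\varepsilon(\bv)\in[L^2]^{d\times d}$ and $\bv\in[L^2]^d$ together imply $\nabla\bv\in[L^2]^{d\times d}$ on a general Lipschitz domain. On smooth or convex domains one can integrate by parts freely, but on a Lipschitz domain this requires invoking Ne\v{c}as' lemma (the characterization of $L^2$ as the space of $H^{-1}$ distributions whose gradient lies in $H^{-1}$). I would therefore cite this as a known result — it is available in the references already listed, e.g. \cite{Ciarlet, Duvaut, Nitsche} — rather than reproving it, and present the compactness argument above as the main line of reasoning, noting that the inequality \eqref{EQ:KornsInequality} is exactly the "first Korn inequality" in the form needed later for the second Korn inequality \eqref{EQ:KornIneq:2nd-002} used in the uniqueness and norm-equivalence lemmas.
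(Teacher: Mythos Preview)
Your proposal is correct and rests on the same two ingredients the paper uses: the algebraic identity $\partial_j\partial_k v_i = \partial_j\varepsilon_{ik}(\bv)+\partial_k\varepsilon_{ij}(\bv)-\partial_i\varepsilon_{jk}(\bv)$ and the Ne\v{c}as/Lions lemma (which is exactly the paper's inequality \eqref{Lions-Estimate}, $\|\phi\|_0\le C(\|\nabla\phi\|_{-1}+\|\phi\|_{-1})$). The difference is that the paper applies these \emph{directly}: it takes $\phi=\partial_j v_i$ in \eqref{Lions-Estimate}, bounds $\|\nabla(\partial_j v_i)\|_{-1}\le C\|\varepsilon(\bv)\|_0$ via the identity, bounds $\|\partial_j v_i\|_{-1}\le\|\bv\|_0$ trivially, and is done in two lines---no compactness, no contradiction.

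Your contradiction/Peetre--Tartar layer is therefore correct but redundant: when you write ``this norm controls $\|\nabla\bv_n\|_0$ up to lower-order terms,'' that statement already \emph{is} \eqref{EQ:KornsInequality}, since the lower-order term $\|\bv\|_0$ sits on the right-hand side of the inequality you are proving. The compactness machinery you describe is precisely what the paper deploys for the \emph{second} Korn inequality (Theorem~\ref{kornsecond}), where one genuinely needs Rellich to trade $\|\bv\|_0$ for a seminorm $\Phi$ that annihilates only rigid motions. So your instinct is right, just aimed one theorem too early.
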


\begin{proof} A proof can be given by using the following inequality
\cite{Duvaut}
\begin{equation}\label{Lions-Estimate}
\|\phi\|_0 \leq C (\|\nabla \phi\|_{-1} + \|\phi\|_{-1}),\qquad
\forall \ \phi\in L^2(\Omega).
\end{equation}
To this end, for any $\bv\in [H^1(\Omega)]^d$, it is not hard to
check that
\begin{equation}\label{Lions-Estimate-2}
\partial_j\partial_k v_i = \partial_j \varepsilon_{ik}(\bv)
+\partial_k \varepsilon_{ij}(\bv) - \partial_i
\varepsilon_{jk}(\bv).
\end{equation}
It follows that $\nabla v_i\in H^{-1}(\Omega)$ if
$\varepsilon(\bv)\in [L^2(\Omega)]^{d\times d}$. Moreover, from
(\ref{Lions-Estimate}) and (\ref{Lions-Estimate-2}) one has
\begin{equation*}
\begin{split}
\|\nabla \bv\|_0 &\leq C (\|\nabla\varepsilon(\bv)\|_{-1} +
\|\nabla\bv\|_{-1})\\
&\leq C (\|\varepsilon(\bv)\|_0  + \|\bv\|_{0}),
\end{split}
\end{equation*}
which is the Korn's inequality (\ref{EQ:KornsInequality}).
\end{proof}

The following is a characterization of the space of rigid motions as
the kernel of the strain tensor operator.

\begin{lemma}\label{lemma-rigidmotion}
Let $\Omega$ be an open bounded and connected domain in
$\mathbb{R}^d$. The space of rigid motions ${\sf RM}(\Omega)$ is
identical to the kernel of the strain tensor operator; i.e., for
$\bv\in [H^1(\Omega)]^d$, there holds $\varepsilon(\bv)=0$ if and
only if $\bv\in {\sf RM}(\Omega)$.
\end{lemma}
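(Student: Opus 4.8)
The plan is to establish both implications directly. The easy direction is that every rigid motion lies in the kernel of $\varepsilon$: if $\bv = \bm{a} + \eta\bx$ with $\bm{a}\in\bbR^d$ and $\eta\in so(d)$, then $\nabla\bv = \eta$ is a constant skew-symmetric matrix, so $\varepsilon(\bv) = \frac12(\eta + \eta^T) = 0$ pointwise. For the converse, suppose $\bv\in[H^1(\Omega)]^d$ satisfies $\varepsilon(\bv) = 0$ a.e.\ in $\Omega$. The first step is to show $\bv$ is smooth: since $\varepsilon_{ij}(\bv) = 0$ for all $i,j$, the identity (\ref{Lions-Estimate-2}), namely $\partial_j\partial_k v_i = \partial_j\varepsilon_{ik}(\bv) + \partial_k\varepsilon_{ij}(\bv) - \partial_i\varepsilon_{jk}(\bv)$, shows that all second distributional derivatives of each $v_i$ vanish. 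Hence $\nabla v_i$ is a constant vector on each connected component, and since $\Omega$ is connected, $\nabla\bv$ is a constant matrix $M$ on all of $\Omega$; consequently $\bv(\bx) = \bm{a} + M\bx$ for some $\bm{a}\in\bbR^d$.

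It then remains to observe that the constant matrix $M = \nabla\bv$ must be skew-symmetric: its symmetric part is exactly $\varepsilon(\bv) = 0$, so $M = \eta$ with $\eta \in so(d)$, and therefore $\bv\in{\sf RM}(\Omega)$. This closes the converse and completes the characterization.

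The only point requiring a little care is the justification that vanishing second \emph{distributional} derivatives of $v_i\in H^1(\Omega)$ force $v_i$ to be an affine function on the connected open set $\Omega$. This follows from a standard mollification argument: mollify $v_i$ to get smooth functions $v_i^\e$ on compactly contained subdomains, note that their ordinary second derivatives vanish there (mollification commutes with differentiation), so each $v_i^\e$ is affine on every ball in its domain of definition, hence affine on connected subsets; passing to the limit and using connectedness of $\Omega$ yields that $v_i$ agrees a.e.\ with a single affine function. I expect this distributional-to-classical passage to be the main (though quite routine) obstacle; everything else is elementary linear algebra. Alternatively, one can invoke (\ref{EQ:KornsInequality}) together with the fact that $\varepsilon(\bv) = 0$ to deduce $\bv\in[H^1_{\mathrm{loc}}]^d$ has locally bounded gradient and argue similarly, but the direct distributional argument via (\ref{Lions-Estimate-2}) is cleanest.
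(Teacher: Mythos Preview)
Your proof is correct and follows essentially the same route as the paper: both directions are handled identically, using the identity (\ref{Lions-Estimate-2}) to conclude that all second derivatives of $v_i$ vanish, hence $\bv$ is affine on the connected domain, and then observing that the symmetric part of the constant gradient matrix is $\varepsilon(\bv)=0$ so the matrix lies in $so(d)$. Your additional care in justifying the distributional-to-classical passage via mollification is more thorough than the paper, which simply asserts $\bv\in[P_1(\Omega)]^d$ from $\partial_j\partial_k v_i=0$ without further comment.
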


\begin{proof}
For any $\bv \in {\sf RM}(\Omega)$, there exist ${\bf{a}}\in \mathbb
R^d$ and a skew-symmetric $d\times d$ matrix $\eta$ such that
$\bu=\bf{a}+\eta {\bf{x}}$. It is easy to check that $\e(\bv)=0$.

For any $\bv$ satisfying $\e(\bv)=0$, we have from
(\ref{Lions-Estimate-2}) that $\partial_j\partial_k v_i=0$, and
hence $\bv \in [P_1(\Omega)]^d$. Thus, there exist $\bf{a}\in
\mathbb R^d$ and $\eta\in \mathbb R^{d\times d}$ such that
$\bv={\bf{a}}+\eta{\bf{x}}$. Since $\e(\bv)=0$, we get
\begin{eqnarray*}
0=\e(\bv)=\e(\bf{a}+\eta {\bf{x}}) =\frac12(\nabla(\bf{a}+\eta
{\bf{x}})+\nabla(\bf{a}+\eta {\bf{x}})^T)=\frac12(\eta+\eta^T).
\end{eqnarray*}
It follows that $\eta^T=-\eta$, which means that $\eta$ is
skew-symmetric, and hence $\bv\in {\sf RM}(\Omega)$.
\end{proof}

\begin{lemma}\label{lemma:rigidmotionzero}
Let ${\bf{x}}_i, ~i=0,\ldots,d-1$, be $d$-points in $\Omega$ which
form a $(d-1)-$dimensional hyperplane. If $\bv\in {\sf{RM}}(\Omega)$
satisfies $\bv({\bf{x}}_i)={\bf 0},~i=0,\ldots,d-1$, then we must
have $\bv\equiv{\bf 0}$.
\end{lemma}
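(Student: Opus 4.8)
The plan is to use the structure of rigid motions directly. Since $\bv \in {\sf RM}(\Omega)$, write $\bv(\bx) = {\bf a} + \eta\bx$ for some ${\bf a} \in \mathbb{R}^d$ and some skew-symmetric matrix $\eta \in so(d)$, as guaranteed by Lemma \ref{lemma-rigidmotion}. The conditions $\bv(\bx_i) = {\bf 0}$ for $i = 0, \ldots, d-1$ then become the linear system ${\bf a} + \eta\bx_i = {\bf 0}$ for each $i$. Subtracting the $i=0$ equation from the others kills ${\bf a}$ and leaves $\eta(\bx_i - \bx_0) = {\bf 0}$ for $i = 1, \ldots, d-1$. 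Since the $d$ points form a $(d-1)$-dimensional hyperplane, the $d-1$ vectors $\bx_i - \bx_0$ are linearly independent and span a $(d-1)$-dimensional subspace $H_0 \subset \mathbb{R}^d$; thus $\eta$ annihilates $H_0$.

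The heart of the argument is then a short linear-algebra fact: a skew-symmetric $d\times d$ matrix that vanishes on a hyperplane must be zero. I would argue as follows. Because $\eta H_0 = {\bf 0}$, the range of $\eta$ has dimension at most $1$; but for a real skew-symmetric matrix the rank is always even (the nonzero eigenvalues come in conjugate pairs $\pm i\omega$), so $\mathrm{rank}(\eta) = 0$, i.e. $\eta = 0$. Alternatively, and more elementarily for $d = 2, 3$: in 2D, $\eta = \begin{pmatrix} 0 & -b \\ b & 0\end{pmatrix}$, and $\eta$ annihilating a single nonzero vector forces $b = 0$; in 3D, $\eta\bz = \bo \times \bz$ for some $\bo \in \mathbb{R}^3$, and $\eta$ vanishing on a $2$-dimensional subspace means $\bo$ is orthogonal to that whole subspace, hence $\bo$ is parallel to the $1$-dimensional orthogonal complement — but $\bo \times \bo = {\bf 0}$ shows $\bo$ itself lies in the kernel, so $\bo$ both spans a line and is orthogonal to... rather, simply: $\bo$ must be orthogonal to two independent vectors and also $\eta \bo = {\bf 0}$, and one checks the only consistent possibility compatible with $\dim\ker\eta \ge 2$ combined with evenness of rank is $\bo = {\bf 0}$.

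Once $\eta = 0$, the relation ${\bf a} + \eta\bx_0 = {\bf 0}$ gives ${\bf a} = {\bf 0}$, so $\bv \equiv {\bf 0}$ on $\Omega$, which is the claim.

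I expect the only real obstacle to be stating the rank/dimension step cleanly: one must invoke that the $d$ given points genuinely affinely span a $(d-1)$-plane (so that the difference vectors are independent) and that a real skew-symmetric matrix has even rank. Neither is deep, but the write-up should be careful to handle $d=2$ and $d=3$ explicitly since that is all the paper needs, rather than appealing to the general even-rank theorem. Everything else is routine substitution.
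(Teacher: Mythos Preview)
Your proposal is correct and follows essentially the same route as the paper: write $\bv={\bf a}+\eta\bx$, subtract to obtain $\eta(\bx_i-\bx_0)={\bf 0}$ on a basis of a $(d-1)$-dimensional subspace, and then argue $\eta=0$ by a rank/kernel-dimension count for skew-symmetric matrices before concluding ${\bf a}={\bf 0}$. The only cosmetic difference is in the $d=3$ step, where the paper phrases it via the eigenspace of the zero eigenvalue having dimension one when $\eta\neq 0$, whereas you invoke even rank (your cross-product aside is a bit tangled and is best replaced by the clean even-rank statement you already have).
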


\begin{proof}
Let $\bv={\bf{a}}+\eta {\bf{x}}$ be the rigid motion with
$\bv({\bf{x}}_i)={\bf 0}$. Thus,
\begin{equation}\label{EQ:July27:200}
\eta ({\bf{x}}_i - {\bf{x}}_0) ={\bf 0}, \qquad i=1,\ldots, d-1.
\end{equation}
Note that the set of vectors $\{{\bf{x}}_i -
{\bf{x}}_0\}_{i=1}^{d-1}$ is linearly independent since they form a
basis of a $(d-1)-$dimensional subspace of $\mathbb{R}^d$.

For $d=2$, the skew-symmetric matrix $\eta$ is either zero or
invertible. From the equation (\ref{EQ:July27:200}), we see that
$\eta$ can be nothing except $\eta=0$. For $d=3$, the matrix $\eta$
has eigenvalue $\lambda_0=0$. If $\eta\neq 0$, then the eigen-space
corresponding to the eigenvalue $\lambda_0=0$ must have dimension
$1$. But the equation (\ref{EQ:July27:200}) indicates that the
dimension for this eigen-space is no less than $2$. Consequently, we
must have $\eta =0$. Finally, from $\bv({\bf{x}}_0)={\bf 0}$ we have
${\bf{a}}=0$. This shows that $\bv\equiv 0$.
\end{proof}

\begin{theorem}\label{kornsecond}(Second Korn's Inequality) Assume that
the domain $\Omega$ is connected, open bounded with Lipschitz
continuous boundary. Let $\Phi: H^1(\Omega)\to \mathbb{R}^+$ be a
semi-norm on $H^1(\Omega)$ satisfying
$$
\Phi(\bv)=0 \ and\ \bv\in {\sf RM}(\Omega) \Rightarrow \bv=0.
$$
Then, there exists a constant $C$ such that
\begin{equation}\label{EQ:KornIneq:2nd}
\|\bv\|_{1} \leq C (\|\varepsilon(\bv)\|_{0} + \Phi(\bv)),
\end{equation}
for all $\bv\in [H^1(\Omega)]^d$.
\end{theorem}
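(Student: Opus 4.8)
The plan is to argue by contradiction via a compactness argument, using as the two main inputs the first Korn inequality (\ref{EQ:KornsInequality}) already established above and the Rellich--Kondrachov compact embedding $[H^1(\Omega)]^d\hookrightarrow\hookrightarrow[L^2(\Omega)]^d$ (valid since $\Omega$ is bounded with Lipschitz boundary), together with the characterization in Lemma~\ref{lemma-rigidmotion} of ${\sf RM}(\Omega)$ as the kernel of $\varepsilon(\cdot)$.

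Suppose (\ref{EQ:KornIneq:2nd}) fails. Then for each $n$ there is $\bv_n\in[H^1(\Omega)]^d$ with $\|\bv_n\|_1=1$ and $\|\varepsilon(\bv_n)\|_0+\Phi(\bv_n)<1/n$; in particular $\varepsilon(\bv_n)\to 0$ in $[L^2(\Omega)]^{d\times d}$ and $\Phi(\bv_n)\to 0$. Since $\{\bv_n\}$ is bounded in $[H^1(\Omega)]^d$, by compactness we may pass to a subsequence (not relabeled) converging to some $\bv$ strongly in $[L^2(\Omega)]^d$; in particular $\{\bv_n\}$ is Cauchy in $[L^2(\Omega)]^d$. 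The key step is then to upgrade this to strong $[H^1(\Omega)]^d$-convergence: applying the first Korn inequality (\ref{EQ:KornsInequality}) to the difference $\bv_n-\bv_m$ gives
$$
\|\nabla\bv_n-\nabla\bv_m\|_0\leq C\bigl(\|\varepsilon(\bv_n-\bv_m)\|_0+\|\bv_n-\bv_m\|_0\bigr),
$$
and the right-hand side tends to $0$ as $n,m\to\infty$ because $\varepsilon(\bv_n)\to 0$ and $\{\bv_n\}$ is $L^2$-Cauchy. Hence $\{\bv_n\}$ is Cauchy, thus convergent, in $[H^1(\Omega)]^d$, with limit $\bv$ and $\|\bv\|_1=1$.

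To finish, since $\varepsilon$ is bounded from $[H^1(\Omega)]^d$ to $[L^2(\Omega)]^{d\times d}$, passing to the limit gives $\varepsilon(\bv)=0$, so $\bv\in{\sf RM}(\Omega)$ by Lemma~\ref{lemma-rigidmotion}. Using the continuity (equivalently, lower semicontinuity) of the seminorm $\Phi$ with respect to $\|\cdot\|_1$ we get $\Phi(\bv)\leq\liminf_n\Phi(\bv_n)=0$, hence $\Phi(\bv)=0$; the hypothesis on $\Phi$ then forces $\bv=0$, contradicting $\|\bv\|_1=1$. This contradiction establishes (\ref{EQ:KornIneq:2nd}).

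The only genuine subtlety I anticipate is the passage from $L^2$- to $H^1$-convergence of the extracted sequence --- this is exactly where the first Korn inequality must be invoked, and crucially applied to \emph{differences} $\bv_n-\bv_m$ rather than to a single function --- together with checking that $\Phi$ behaves well in the limit. For the seminorms actually used in this paper (e.g. the $L^2$-norm of the trace on a portion of $\partial\Omega$, whose triviality on ${\sf RM}(\Omega)$ is handled by Lemma~\ref{lemma:rigidmotionzero}) this continuity with respect to $\|\cdot\|_1$ is standard, so the argument applies in all the situations where Theorem~\ref{kornsecond} is needed.
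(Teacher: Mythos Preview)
Your proposal is correct and follows essentially the same route as the paper's proof: a contradiction argument with a normalized sequence, Rellich compactness to get $L^2$-convergence of a subsequence, the first Korn inequality applied to differences to upgrade this to $H^1$-convergence, and then the hypothesis on $\Phi$ to reach the contradiction. Your remark that passing $\Phi$ to the limit requires continuity of $\Phi$ with respect to $\|\cdot\|_1$ is well taken---the paper tacitly uses this when writing $\Phi(\bv)=\lim_k\Phi(\bv_{n_k})$, and (as you note) it holds for the concrete seminorms actually applied later.
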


\begin{proof}
We verify the inequality (\ref{EQ:KornIneq:2nd}) by a contradiction
argument. To this end, assume that (\ref{EQ:KornIneq:2nd}) does not
hold true. Then, for each integer $n$, there exists $\bv_n\in
[H^1(\Omega)]^d$ such that
\begin{equation}\label{EQ:09-02-001}
\|\bv_n\|_{1} > n (\|\varepsilon(\bv_n)\|_{0} + \Phi(\bv_n)).
\end{equation}
We may assume $\|\bv_n\|_{1}=1$, and hence, there exists a
subsequence $\{\bv_{n_k}\}$ which is weakly convergent in $H^1$ and
strongly convergent in $L^2(\Omega)$. From (\ref{EQ:09-02-001}), we
have
$$
\|\varepsilon(\bv_n)\|_{0} +\Phi(\bv_n) < n^{-1}.
$$
Thus,
$$
\|\varepsilon(\bv_n)\|_{0} + \Phi(\bv_n) \to 0,
$$
which, together with Korn's inequality (\ref{EQ:KornsInequality}),
implies that $\{\bv_{n_k}\}$ is a Cauchy sequence in $H^1(\Omega)$.
Hence, there exists a function $\bv\in H^1(\Omega)$ such that
$$
\bv_{n_k} \rightarrow \bv \qquad \mbox{strongly \ in \
$H^1(\Omega)$}.
$$
Moreover, we have
$$
\|\varepsilon(\bv)\|_0 +\Phi(\bv)= \lim_{k\to\infty}
(\|\varepsilon(\bv_{n_k})\|_0  +\Phi(\bv_{n_k})) =0.
$$
Thus,
$$
\bv\in {\sf RM}(\Omega) \ \mbox{and} \ \Phi(\bv)=0,
$$
which leads to $\bv=0$. This is  a contradiction to the assumption
that
$$
\|\bv\|_1 =\lim_{k\to\infty} \|\bv_{n_k}\|_1=1.
$$
This completes the proof.
\end{proof}

\medskip

The following are two particular cases of the seminorm $\Phi(\cdot)$
that satisfy the conditions of Theorem \ref{kornsecond}.

\begin{corollary}\label{kornsecond-01} Assume that
the domain $\Omega$ is connected, open bounded with Lipschitz
continuous boundary. Let $D\subset\Omega$ be a subdomain of
$\Omega$, and $Q_{D,RM}$ be the $L^2$ projection from
$[L^2(\Omega)]^d$ onto the space of rigid motions ${\sf RM}(D)$.
Then, there exists a constant $C$ such that
\begin{equation}\label{EQ:KornIneq:2nd-001}
\|\bv\|_{1} \leq C (\|\varepsilon(\bv)\|_{0} +
\|Q_{D,RM}\bv\|_{0,D}),
\end{equation}
for all $\bv\in [H^1(\Omega)]^d$.
\end{corollary}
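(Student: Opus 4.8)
The plan is to obtain the inequality \eqref{EQ:KornIneq:2nd-001} as an immediate consequence of the Second Korn's Inequality (Theorem \ref{kornsecond}), applied with the particular choice of seminorm
$$
\Phi(\bv):=\|Q_{D,RM}\bv\|_{0,D},\qquad \bv\in[H^1(\Omega)]^d.
$$
First I would record that $\Phi$ is indeed a seminorm on $[H^1(\Omega)]^d$: since $Q_{D,RM}$ is linear and $\|\cdot\|_{0,D}$ is a norm, $\Phi$ inherits positive homogeneity and the triangle inequality; it may fail to be definite, which is exactly why only a seminorm is required in Theorem \ref{kornsecond}.

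The substantive point is to verify the hypothesis of Theorem \ref{kornsecond}, namely that $\Phi(\bv)=0$ together with $\bv\in{\sf RM}(\Omega)$ forces $\bv=0$. Here I would use the reproduction property of the $L^2$ projection. If $\bv\in{\sf RM}(\Omega)$, then $\bv={\bf a}+\eta\bx$ for some ${\bf a}\in\mathbb R^d$ and skew-symmetric $\eta$, so its restriction to $D$ is again of this form and therefore $\bv|_D\in{\sf RM}(D)$; consequently $Q_{D,RM}\bv=\bv|_D$. Thus $\Phi(\bv)=\|\bv\|_{0,D}=0$ yields $\bv\equiv{\bf 0}$ on $D$. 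Since $D$ is a subdomain of $\Omega$ it has nonempty interior, and an element of ${\sf RM}(\Omega)\subset[P_1(\Omega)]^d$ that vanishes on a nonempty open set vanishes identically on the connected domain $\Omega$; alternatively one may select $d$ points of $D$ that form a $(d-1)$-dimensional hyperplane and invoke Lemma \ref{lemma:rigidmotionzero}. Either way, $\bv={\bf 0}$.

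Having checked the hypothesis, Theorem \ref{kornsecond} delivers $\|\bv\|_1\le C(\|\varepsilon(\bv)\|_0+\|Q_{D,RM}\bv\|_{0,D})$, which is precisely \eqref{EQ:KornIneq:2nd-001}. I do not anticipate any real obstacle; the only step that deserves a line of care is the identity $Q_{D,RM}\bv=\bv|_D$ for $\bv\in{\sf RM}(\Omega)$, together with the elementary observation that a first-degree polynomial vector field vanishing on an open subset of a connected domain must vanish everywhere.
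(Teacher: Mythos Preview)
Your proposal is correct and follows essentially the same route as the paper's own proof: define $\Phi(\bv)=\|Q_{D,RM}\bv\|_{0,D}$, observe that for $\bv\in{\sf RM}(\Omega)$ one has $Q_{D,RM}\bv=\bv|_D$ so that $\Phi(\bv)=0$ forces $\bv=0$ on $D$ and hence on $\Omega$, and then invoke Theorem~\ref{kornsecond}. Your write-up is in fact slightly more explicit than the paper's in justifying why a rigid motion vanishing on $D$ must vanish on all of $\Omega$.
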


\begin{proof} Define $\Phi(\bv) =\|Q_{D,RM}\bv\|_{0,D}$ which is clearly a semi-norm in
$[H^1(\Omega)]^d$. The estimate (\ref{EQ:KornIneq:2nd-001}) stems
from Theorem \ref{kornsecond} if the conditions of the theorem are
verified for this semi-norm. To this end, for any $\bv$ satisfying
$\varepsilon(\bv)=0$, we have from Lemma \ref{lemma-rigidmotion}
that $\bv\in {\sf RM}(\Omega)$, and hence $Q_{D,RM}\bv=\bv$ as
$D\subset \Omega$. If, in addition, $\Phi(\bv)=0$, then
$\|\bv\|_{0,D} = \|Q_{D,RM}\bv\|_{0,D}=0$. Hence, $\bv\equiv 0$ in
$\Omega$.
\end{proof}

\begin{corollary}\label{kornsecond-02} Assume that
the domain $\Omega$ is connected, open bounded with Lipschitz
continuous boundary. Let $\Gamma_1\subset \partial\Omega$ be a
nontrivial portion of the boundary $\partial\Omega$ with dimension
$d-1$. For any fixed real number $1\le p < \infty$, there exists a
constant $C$ such that
\begin{equation}\label{EQ:KornIneq:2nd-002}
\|\bv\|_{1} \leq C (\|\varepsilon(\bv)\|_{0} +
\|\bv\|_{L^p(\Gamma_1)}),
\end{equation}
for all $\bv\in [H^1(\Omega)]^d$.
\end{corollary}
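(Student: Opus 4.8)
The plan is to deduce Corollary \ref{kornsecond-02} from the general Second Korn's Inequality (Theorem \ref{kornsecond}) by exhibiting the map $\bv\mapsto\|\bv\|_{L^p(\Gamma_1)}$ as a legitimate seminorm $\Phi$ satisfying the hypotheses of that theorem. First I would check that $\Phi(\bv):=\|\bv\|_{L^p(\Gamma_1)}$ is finite and well defined on $[H^1(\Omega)]^d$: by the trace theorem the trace of $\bv$ lies in $[H^{1/2}(\Gamma_1)]^d$, which embeds continuously into $[L^p(\Gamma_1)]^d$ for every $p$ in the stated range (this is the Sobolev trace embedding on the $(d-1)$-dimensional set $\Gamma_1$; for $d=2$ it is even the embedding $H^{1/2}\hookrightarrow L^\infty$ is false, but $H^{1/2}\hookrightarrow L^p$ for all finite $p$ still holds, and for $d=3$ one has $H^{1/2}(\Gamma_1)\hookrightarrow L^4(\Gamma_1)$ with a further step needed for larger $p$ — see the remark on the obstacle below). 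Homogeneity $\Phi(t\bv)=|t|\,\Phi(\bv)$ and the triangle inequality are immediate from the corresponding properties of the $L^p$ norm, so $\Phi$ is a seminorm on $[H^1(\Omega)]^d$.

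Next I would verify the crucial implication required by Theorem \ref{kornsecond}: if $\varepsilon(\bv)=0$ and $\Phi(\bv)=0$, then $\bv\equiv 0$. From $\varepsilon(\bv)=0$ and Lemma \ref{lemma-rigidmotion}, $\bv\in{\sf RM}(\Omega)$, so $\bv=\mathbf{a}+\eta\mathbf{x}$ with $\mathbf{a}\in\mathbb{R}^d$ and $\eta\in so(d)$; in particular $\bv$ is a (vector) polynomial of degree one, hence continuous on $\overline\Omega$. The condition $\Phi(\bv)=\|\bv\|_{L^p(\Gamma_1)}=0$ then forces $\bv=\mathbf{0}$ pointwise a.e.\ on $\Gamma_1$, and by continuity everywhere on $\Gamma_1$. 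Since $\Gamma_1$ has dimension $d-1$, it contains $d$ points ${\bf x}_0,\ldots,{\bf x}_{d-1}$ lying on a $(d-1)$-dimensional hyperplane (indeed $\Gamma_1$ itself spans such a hyperplane locally, so one can pick $d$ points of $\Gamma_1$ in general position within it); at each of these $\bv$ vanishes. Lemma \ref{lemma:rigidmotionzero} then gives $\bv\equiv\mathbf{0}$ on $\Omega$. This confirms the hypothesis of Theorem \ref{kornsecond}, and applying that theorem yields $\|\bv\|_1\le C(\|\varepsilon(\bv)\|_0+\|\bv\|_{L^p(\Gamma_1)})$, which is exactly (\ref{EQ:KornIneq:2nd-002}).

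The main obstacle I anticipate is the well-definedness step for large $p$: the trace space $H^{1/2}(\Gamma_1)$ embeds into $L^p(\Gamma_1)$ for all $1\le p<\infty$ when $d-1\le 2$ (i.e.\ $d\le 3$, the only cases of interest here, by the Sobolev embedding on a $(d-1)$-manifold where the critical exponent is $2(d-1)/(d-2-1)=\infty$ for $d=2$ and $4$ for $d=3$, the latter requiring an interpolation/Gagliardo–Nirenberg argument to reach all finite $p$ — or, more cleanly, using $H^1(\Omega)\hookrightarrow L^q(\Omega)$ for suitable $q$ and then the trace $W^{1,q}(\Omega)\hookrightarrow L^p(\Gamma_1)$). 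In the write-up I would simply invoke the standard trace and Sobolev embedding theorems for Lipschitz domains to assert $\Phi$ is a continuous seminorm on $[H^1(\Omega)]^d$ for every finite $p$, since the corollary only needs finiteness, not sharp constants; the rigid-motion argument via Lemmas \ref{lemma-rigidmotion} and \ref{lemma:rigidmotionzero} is the genuinely elasticity-specific content and is routine given those lemmas.
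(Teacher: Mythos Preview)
Your proof is correct and follows essentially the same approach as the paper: define $\Phi(\bv)=\|\bv\|_{L^p(\Gamma_1)}$, verify that a rigid motion vanishing on $\Gamma_1$ must be identically zero via Lemma~\ref{lemma:rigidmotionzero}, and invoke Theorem~\ref{kornsecond}. You are in fact more careful than the paper about the well-definedness and $H^1$-continuity of $\Phi$ through trace and Sobolev embeddings---the paper simply takes this for granted---so your discussion of that point, while not strictly needed to match the paper's level of detail, is a welcome addition rather than a deviation.
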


\begin{proof} In $[H^1(\Omega)]^d$, we define a semi-norm by
$$
\Phi(\bv):=\left(\int_{\Gamma_1}|\bv|^p
ds\right)^{\frac{1}{p}},\qquad \bv\in [H^1(\Omega)]^d.
$$
We claim that this semi-norm satisfies the conditions of Theorem
\ref{kornsecond}. In fact, if $\bv$ satisfies $\Phi(\bv)=0$, then we
have $\bv=0$ on $\Gamma_1$. Hence $\bv=0$ on $d$-points of a
hyperplane of dimension $d-1$. If, in addition, $\bv\in {\sf
RM}(\Omega)$, then from Lemma \ref{lemma:rigidmotionzero} we must
have $\bv\equiv 0$ on $\Omega$.
\end{proof}

\subsection{Some technical inequalities} For any $\bx_0\in \bbR^d$, denote by $B(\bx_0,
r)$ the $d$-ball centered at $\bx_0$ with radius $r$. The unit
$d$-ball centered at the origin is denoted by $\widehat B$, called
the reference $d$-ball. The reference $d$-ball can be identified
with the $d$-ball $B(\bx_0, r)$ using the following affine map
$$
\bx=F(\widehat\bx): =\bx_0 + r\widehat\bx.
$$
The inverse of the affine map $F$ is given by
$$
\widehat\bx=F^{-1}(\bx): = (\bx-\bx_0)/r.
$$
Any function $\phi=\phi(\bx)$ on $B(\bx_0, r)$ defines a function on
the reference $d$-ball as follows
$$
\widehat\phi(\widehat\bx) = \phi(F(\widehat\bx)),
$$
which shall be denoted as
$$
\widehat\phi(\widehat\bx) = \widehat{\phi(\bx)}.
$$
It is clear that $\phi\in {\sf RM}(B(\bx_0, r))$ if and only if
$\widehat\phi \in {\sf RM}(\widehat B)$.

\begin{lemma}\label{Lemma:Mapping}
Let $Q_{B(\bx_0,r)}$ and $\widehat{Q}_{\widehat B}$ be the $L^2$
projections onto the space of rigid motions ${\sf RM}(B(\bx_0,r))$
and ${\sf RM}(\widehat{B})$, respectively. Then, for any $\bv\in
[L^2(B(\bx_0,r))]^d$ we have
\begin{equation}\label{EQ:L2-for-Mapping}
\widehat{Q_{B(\bx_0,r)} \bv} = \widehat{Q}_{\widehat B}
\widehat{\bv}.
\end{equation}
\end{lemma}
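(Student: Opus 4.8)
The plan is to exploit the fact that the $L^2$ projection onto a finite-dimensional subspace is characterized by orthogonality, and that the change of variables $F$ is affine, hence maps rigid motions to rigid motions bijectively (as already noted just above the statement). First I would write down the defining property of $Q_{B(\bx_0,r)}\bv$: it is the unique element of ${\sf RM}(B(\bx_0,r))$ satisfying
\begin{equation*}
\int_{B(\bx_0,r)} \big(\bv - Q_{B(\bx_0,r)}\bv\big)\cdot \bw \, d\bx = 0, \qquad \forall\, \bw\in {\sf RM}(B(\bx_0,r)).
\end{equation*}
Similarly, $\widehat Q_{\widehat B}\widehat\bv$ is the unique element of ${\sf RM}(\widehat B)$ with
\begin{equation*}
\int_{\widehat B} \big(\widehat\bv - \widehat Q_{\widehat B}\widehat\bv\big)\cdot \widehat\bw \, d\widehat\bx = 0, \qquad \forall\, \widehat\bw\in {\sf RM}(\widehat B).
\end{equation*}

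The key step is to transform the first orthogonality relation to the reference ball via $\bx = F(\widehat\bx) = \bx_0 + r\widehat\bx$. Since $d\bx = r^d\, d\widehat\bx$ and the Jacobian factor $r^d$ is a positive constant, the first relation becomes
\begin{equation*}
\int_{\widehat B} \big(\widehat\bv - \widehat{Q_{B(\bx_0,r)}\bv}\big)\cdot \widehat\bw \, d\widehat\bx = 0, \qquad \forall\, \widehat\bw\in {\sf RM}(\widehat B),
\end{equation*}
where I have used that $\widehat{(\,\cdot\,)}$ commutes with the dot product, that $\widehat{Q_{B(\bx_0,r)}\bv}\in {\sf RM}(\widehat B)$ (since $Q_{B(\bx_0,r)}\bv\in {\sf RM}(B(\bx_0,r))$ and pullback by the affine $F$ preserves rigid motions, as observed in the excerpt), and that as $\bw$ ranges over ${\sf RM}(B(\bx_0,r))$, its pullback $\widehat\bw$ ranges over all of ${\sf RM}(\widehat B)$.

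Now $\widehat{Q_{B(\bx_0,r)}\bv}$ lies in ${\sf RM}(\widehat B)$ and satisfies exactly the orthogonality relation that uniquely characterizes $\widehat Q_{\widehat B}\widehat\bv$. By uniqueness of the $L^2$ projection, the two must coincide, giving (\ref{EQ:L2-for-Mapping}). I expect the only point requiring a little care — the "main obstacle," though it is minor — is the bookkeeping that the pullback map $\bw \mapsto \widehat\bw$ is a bijection from ${\sf RM}(B(\bx_0,r))$ onto ${\sf RM}(\widehat B)$ and that it commutes with integration against test functions up to the constant Jacobian; once that is in place, the constant $r^d$ divides out of the homogeneous identity and the argument closes purely by the uniqueness characterization of orthogonal projections onto finite-dimensional subspaces.
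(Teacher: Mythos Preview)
Your proposal is correct and follows essentially the same approach as the paper: both arguments pull back the defining orthogonality relation for $Q_{B(\bx_0,r)}\bv$ to the reference ball via the affine map $F$, use that the constant Jacobian $r^d$ drops out and that $\phi\mapsto\widehat\phi$ is a bijection between ${\sf RM}(B(\bx_0,r))$ and ${\sf RM}(\widehat B)$, and then invoke uniqueness of the $L^2$ projection. The only cosmetic difference is that the paper writes the projection identity as $(Q\bv,\phi)=(\bv,\phi)$ rather than $(\bv-Q\bv,\phi)=0$, but the logic is identical.
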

\begin{proof}
From the definition of the $L^2$ projection, we have for any
$\phi\in {\sf RM}(B(\bx_0,r))$
$$
\int_{B(\bx_0,r)} Q_{B(\bx_0,r)} \bv \ \phi dx =  \int_{B(\bx_0,r)}
 \bv \ \phi dx = r^d \int_{\widehat B} \widehat\bv \ \widehat\phi
 d\widehat{x} = r^d \int_{\widehat B} \widehat Q_{\widehat B}\widehat\bv \ \widehat\phi
 d\widehat{x}.
$$
Also by changing the domain from $B(\bx_0,r)$ to $\widehat B$,
$$
\int_{B(\bx_0,r)} Q_{B(\bx_0,r)} \bv \ \phi dx = r^d \int_{\widehat
B}\widehat{Q_{B(\bx_0,r)} \bv} \ \widehat\phi
 d\widehat{x}.
$$
It follows that
$$
\int_{\widehat B}\widehat{Q_{B(\bx_0,r)} \bv} \ \widehat\phi
 d\widehat{x} = \int_{\widehat B} \widehat Q_{\widehat B}\widehat\bv \ \widehat\phi
 d\widehat{x},
$$
which leads to (\ref{EQ:L2-for-Mapping}).
\end{proof}

\begin{lemma}\label{Lemma:July29-001} There exists a constant $C$ such
that, for any $\bw\in [H^1(B(\bx_0,r))]^d$, we have
\begin{equation*}
\|\bw-Q_{B(\bx_0,r)}\bw\|_{0, B(\bx_0,r)} \leq C
r\|\varepsilon(\bw)\|_{0, B(\bx_0,r)}.
\end{equation*}
\end{lemma}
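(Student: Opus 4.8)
The plan is to prove the estimate by a scaling argument to the reference $d$-ball $\widehat B$, where the inequality reduces to an already-established form of Korn's inequality. The point of Lemma \ref{Lemma:July29-001} is to exhibit the explicit dependence on the radius $r$ of the ball, which is essential for later use in conjunction with the domain inverse inequality in Lemma \ref{korninqua}.

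First I would transform everything to the reference ball. Given $\bw\in [H^1(B(\bx_0,r))]^d$, set $\widehat\bw(\widehat\bx)=\widehat{\bw(\bx)}=\bw(\bx_0+r\widehat\bx)$ on $\widehat B$. The change of variables $\bx=\bx_0+r\widehat\bx$ gives $\|\bw-Q_{B(\bx_0,r)}\bw\|_{0,B(\bx_0,r)}^2 = r^d\|\widehat{\bw-Q_{B(\bx_0,r)}\bw}\|_{0,\widehat B}^2$, and by Lemma \ref{Lemma:Mapping} the integrand is $\widehat\bw-\widehat Q_{\widehat B}\widehat\bw$; similarly $\|\varepsilon(\bw)\|_{0,B(\bx_0,r)}^2 = r^{d-2}\|\varepsilon(\widehat\bw)\|_{0,\widehat B}^2$, since each first derivative picks up a factor $r^{-1}$ (so $\varepsilon(\bw)(\bx) = r^{-1}\widehat{\varepsilon(\widehat\bw)}(\widehat\bx)$) and the volume element contributes $r^d$. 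Collecting the $r$ powers, the claimed inequality is equivalent to the scale-invariant statement
\begin{equation*}
\|\widehat\bw-\widehat Q_{\widehat B}\widehat\bw\|_{0,\widehat B} \leq C \|\varepsilon(\widehat\bw)\|_{0,\widehat B}, \qquad \forall\,\widehat\bw\in [H^1(\widehat B)]^d,
\end{equation*}
with $C$ now an absolute constant depending only on $\widehat B$ and $d$.

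Next I would establish this reference inequality using Corollary \ref{kornsecond-01} applied on the fixed domain $\widehat B$ with $D=\widehat B$ and the semi-norm $\Phi(\widehat\bv)=\|\widehat Q_{\widehat B}\widehat\bv\|_{0,\widehat B}$. Write $\widehat\bv = \widehat\bw-\widehat Q_{\widehat B}\widehat\bw$. Since $\widehat Q_{\widehat B}$ is a projection onto ${\sf RM}(\widehat B)$, we have $\widehat Q_{\widehat B}\widehat\bv = \widehat Q_{\widehat B}\widehat\bw - \widehat Q_{\widehat B}\widehat\bw = 0$, so $\Phi(\widehat\bv)=0$, and also $\varepsilon(\widehat\bv) = \varepsilon(\widehat\bw)$ because rigid motions are annihilated by the strain operator (Lemma \ref{lemma-rigidmotion}). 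Applying (\ref{EQ:KornIneq:2nd-001}) on $\widehat B$ then gives $\|\widehat\bv\|_{1,\widehat B} \leq C(\|\varepsilon(\widehat\bv)\|_{0,\widehat B} + \Phi(\widehat\bv)) = C\|\varepsilon(\widehat\bw)\|_{0,\widehat B}$, and since $\|\widehat\bv\|_{0,\widehat B}\leq \|\widehat\bv\|_{1,\widehat B}$ this yields the desired scale-invariant bound. Undoing the scaling then completes the proof.

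I do not expect a serious obstacle here; the main thing to be careful about is bookkeeping the powers of $r$ correctly under the affine map — in particular that the $L^2$ norm of $\bw$ scales like $r^{d/2}$ while the $L^2$ norm of $\varepsilon(\bw)$ scales like $r^{d/2-1}$, so the net gain is one power of $r$ on the right-hand side — and verifying that $Q_{B(\bx_0,r)}$ really does commute with the pullback, which is exactly the content of Lemma \ref{Lemma:Mapping}. Everything else is a direct invocation of Corollary \ref{kornsecond-01}.
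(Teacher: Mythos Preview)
Your proposal is correct and follows essentially the same route as the paper: scale to the reference ball via the affine map, invoke Lemma~\ref{Lemma:Mapping} to identify $\widehat{\bw-Q_{B(\bx_0,r)}\bw}$ with $\widehat\bw-\widehat Q_{\widehat B}\widehat\bw$, then apply Corollary~\ref{kornsecond-01} on $\widehat B$ with $D=\widehat B$ to the difference (which has vanishing rigid-motion projection and the same strain as $\widehat\bw$), and finally unwind the $r$-powers. The only cosmetic difference is that you first isolate the scale-invariant reference inequality before proving it, whereas the paper carries the scaling through inline.
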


\begin{proof} Let $\bw^\bot = \bw - Q_{B(\bx_0,r)}\bw$.
It follows from Lemma \ref{Lemma:Mapping} that
$$
\widehat{\bw^\bot} = \widehat{\bw} - \widehat{Q}_{\widehat
B}\widehat{\bw}.
$$
It is also easy to see the following identities
\begin{equation}\label{Eq:July29:800}
\widehat{Q}_{\widehat B} \widehat{\bw^\bot} =0, \quad
\varepsilon(\widehat{\bw^\bot}) =\varepsilon(\widehat{\bw}).
\end{equation}
By mapping to the reference $d$-ball, we have
$$
\|\bw^\bot\|^2_{0,B(\bx_0,r)} = r^d
\|\widehat{\bw^\bot}\|_{0,\widehat B}^2.
$$
Using the second Korn's inequality (\ref{EQ:KornIneq:2nd-001}) with
$\Omega=\widehat B$ and $D=\widehat B$, we obtain
\begin{eqnarray*}
\|\widehat{\bw^\bot}\|_{0,\widehat B}^2 &\leq &C(
\|\varepsilon(\widehat{\bw^\bot})\|_{0,\widehat B}^2 +
\|\widehat{Q}_{\widehat B} \widehat{\bw^\bot}\|_{0,\widehat B}^2)\\
& \leq & C \|\varepsilon(\widehat{\bw})\|_{0,\widehat B}^2\\
&\leq & C r^{2-d} \|\varepsilon(\bw)\|_{0,B(\bx_0,r)}^2,
\end{eqnarray*}
where we have used (\ref{Eq:July29:800}) in the second line. Thus,
one has
$$
\|\bw - Q_{B(\bx_0,r)}\bw\|^2_{0,B(\bx_0,r)} =
\|\bw^\bot\|^2_{0,B(\bx_0,r)} \leq Cr^{2}
\|\varepsilon(\bw)\|_{0,B(\bx_0,r)}^2.
$$
This completes the proof of the lemma.
\end{proof}

\begin{lemma}\label{korninqua} Let $\T_h$ be a shape regular finite element
partition of $\Omega$. There exists a constant $C$ independent of
$T\in \T_h$ such that
\begin{equation}\label{EQ:July29:815}
\|\bv_0-Q_b\bv_0\|_{\partial T}^2 \leq
Ch_T\|\varepsilon(\bv_0)\|_T^2,\qquad \forall\ \bv_0\in [P_k(T)]^d.
\end{equation}
\end{lemma}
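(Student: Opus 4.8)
The plan is to transplant the estimate to a ball containing $T$, apply Lemma~\ref{Lemma:July29-001}, and then return to $T$ via the domain inverse inequality. The starting observation is that $Q_b$ reproduces traces of rigid motions: for any $\br$ that is a rigid motion on $T$ and any edge/face $e\subset\partial T$, the restriction $\br|_e$ lies in $P_{RM}(e)\subseteq V_{k-1}(e)$, hence $Q_b(\br|_e)=\br|_e$. Since $Q_b$ is an $L^2$-orthogonal projection on each $e$, subtracting $\br$ and using $Q_b\br=\br$ on $\partial T$ gives
\begin{equation*}
\|\bv_0-Q_b\bv_0\|_{\partial T}=\|(\bv_0-\br)-Q_b(\bv_0-\br)\|_{\partial T}\le\|\bv_0-\br\|_{\partial T},\qquad\forall\,\br\in{\sf RM}(T).
\end{equation*}

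Next I would pass from $\partial T$ to the interior. Since $\bv_0-\br\in[P_k(T)]^d$, the trace inequality (\ref{A4}) summed over the faces of $T$, together with the inverse inequality $\|\nabla(\bv_0-\br)\|_T\le Ch_T^{-1}\|\bv_0-\br\|_T$, yields
\begin{equation*}
\|\bv_0-\br\|_{\partial T}^2\le C\big(h_T^{-1}\|\bv_0-\br\|_T^2+h_T\|\nabla(\bv_0-\br)\|_T^2\big)\le Ch_T^{-1}\|\bv_0-\br\|_T^2 .
\end{equation*}

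It remains to choose $\br$ so that $\|\bv_0-\br\|_T\le Ch_T\|\varepsilon(\bv_0)\|_T$. By shape regularity there is a ball $B=B(\bx_T,r)$ with $r\le Ch_T$ and $T\subset B$. Viewing $\bv_0$ as a global polynomial, set $\br:=Q_{B,RM}\bv_0$, the $L^2(B)$-projection of $\bv_0$ onto the rigid motions; its restriction to $T$ lies in ${\sf RM}(T)$, as required above. Lemma~\ref{Lemma:July29-001} on $B$ then gives
\begin{equation*}
\|\bv_0-\br\|_T\le\|\bv_0-\br\|_{0,B}\le Cr\,\|\varepsilon(\bv_0)\|_{0,B}\le Ch_T\,\|\varepsilon(\bv_0)\|_{0,B} .
\end{equation*}
The decisive point is now to replace $\|\varepsilon(\bv_0)\|_{0,B}$ by $\|\varepsilon(\bv_0)\|_{0,T}$, against the inclusion $T\subset B$. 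This is exactly where the domain inverse inequality of \cite{wy1202} enters: $\varepsilon(\bv_0)$ is a matrix polynomial of degree at most $k-1$, and, by shape regularity, $T$ contains a ball of radius comparable to $h_T$ while $B$ has radius comparable to $h_T$; rescaling to a reference configuration and using equivalence of norms on the finite-dimensional space of polynomials of degree at most $k-1$ gives $\|\varepsilon(\bv_0)\|_{0,B}\le C\|\varepsilon(\bv_0)\|_{0,T}$ with $C$ depending only on $k$ and the shape-regularity parameter. Chaining the four displays proves (\ref{EQ:July29:815}).

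I expect this last step to be the main obstacle: Lemma~\ref{Lemma:July29-001}, and hence the second Korn inequality, is available with a scale-robust constant only on balls, whereas a direct Korn estimate on an arbitrary shape-regular polygon/polyhedron would carry a shape-dependent constant; the domain inverse inequality is what bridges $B$ and $T$ without sacrificing uniformity in $h_T$, which is the ``creative'' combination alluded to in the introduction. A minor technical point is that summing (\ref{A4}) over $\partial T$ requires the number of faces of $T$ to be uniformly bounded, which is part of the shape-regularity assumption of \cite{wy1202}.
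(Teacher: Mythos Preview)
Your argument is correct and uses the same three ingredients as the paper: the best-approximation property of $Q_b$ with respect to rigid motions, Lemma~\ref{Lemma:July29-001} on a ball, and the domain inverse inequality from \cite{wy1202}. The one structural difference is the choice of ball. The paper takes a ball $B(\bx_0,r)\subset T$ with $r$ comparable to $h_T$ (this is where shape regularity enters), applies the domain inverse inequality to the polynomial $\bv_0-Q_{B(\bx_0,r)}\bv_0$ in the direction $T\to B$, and then the final bound $\|\varepsilon(\bv_0)\|_{0,B}\le\|\varepsilon(\bv_0)\|_{0,T}$ is a trivial inclusion. You instead take a ball $B\supset T$ (which needs no shape regularity---any set of diameter $h_T$ sits in such a ball), so that $\|\bv_0-\br\|_T\le\|\bv_0-\br\|_{0,B}$ is the trivial inclusion, and you spend the domain inverse inequality on $\varepsilon(\bv_0)$ in the direction $B\to T$; here shape regularity enters because $T$ must contain a ball of radius comparable to $h_T$. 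The two routes are dual to each other and equally valid; neither gains or loses anything in terms of generality or constants.
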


\begin{proof} From the shape regularity assumption, there exists a
$d$-ball $B(\bx_0,r)\subset T$ for which the radius $r$ is
proportional to $h_T$; i.e., $r= \lambda_0 h_T$ with a constant
$\lambda_0$ bounded away from $0$. For any $\bv_0\in [P_k(T)]^d$,
consider the projection $Q_{B(\bx_0,r)}\bv_0$ which is naturally
extended to $T$. Since $(Q_{B(\bx_0,r)}\bv_0)|_\pT$ belongs to the
finite element space on $\pT$ and $Q_b$ is the $L^2$ projection onto
this finite element space, then
\begin{equation}\label{EQ:July29:810}
\|\bv_0-Q_b\bv_0\|_{\partial T} \leq
\|\bv_0-Q_{B(\bx_0,r)}\bv_0\|_{\partial T}.
\end{equation}
Using the trace and inverse inequality (\ref{A4})-(\ref{Aa}) we have
\begin{eqnarray*}
\|\bv_0-Q_{B(\bx_0,r)}\bv_0\|_{\partial T}^2 &\leq &C(
h_T^{-1}\|\bv_0- Q_{B(\bx_0,r)}\bv_0\|_{T}^2+h_T
\|\nabla(\bv_0-Q_{B(\bx_0,r)}\bv_0)\|_{T}^2)\\
&\leq & Ch_T^{-1}\|\bv_0- Q_{B(\bx_0,r)}\bv_0\|_{T}^2\\
&\leq & C h_T^{-1}\|\bv_0- Q_{B(\bx_0,r)}\bv_0\|_{B(\bx_0,r)}^2,
\end{eqnarray*}
where we have applied the domain inverse inequality (see the
Appendix in \cite{wy1202}) in the third line. Now applying Lemma
\ref{Lemma:July29-001} to the term $\|\bv_0-
Q_{B(\bx_0,r)}\bv_0\|_{B(\bx_0,r)}^2 $ we obtain
\begin{equation*}
\|\bv_0- Q_{B(\bx_0,r)}\bv_0\|_{\pT}^2 \leq C h_T
\lambda_0^2\|\varepsilon(\bv_0)\|_{B(\bx_0,r)}^2\leq C h_T
\|\varepsilon(\bv_0)\|_{T}^2.
\end{equation*}
Combining (\ref{EQ:July29:810}) with the above estimate gives
(\ref{EQ:July29:815}). This completes the proof.
\end{proof}

\begin{lemma}\label{lem8.2} Assume that the finite
element partition ${\cal T}_h$ of $\Omega$ is shape regular, and
the finite element space $V_h$ is constructed as in Section
\ref{Section:fem-algorithms} with $k\ge 1$. Let $1 \leq r \leq k$.
Then, for any $\bw\in [H^{r+1} (\Omega)]^d$, $\rho \in H^r
(\Omega)$ and $\bv \in V_h$, the following estimates hold true
\begin{align}\label{A7}
|s(Q_h\bw,\bv)|\leq &
Ch^r\|\bw\|_{r+1}\3bar\bv\3bar,\\
|\ell_\bw(\bv)|\leq&
Ch^r\|\bw\|_{r+1}\3bar\bv\3bar,\label{A8}\\
|\theta_\rho(\bv)|\leq &
Ch^r\|\rho\|_r\3bar\bv\3bar,\label{A9}
\end{align}
where $\ell_\textbf{\bw}(\cdot)$ and $\theta_\rho(\cdot)$ are given
as in (\ref{l}) and (\ref{theta}).
\end{lemma}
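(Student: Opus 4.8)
The plan is to estimate each of the three functionals term by term, in each case reducing the boundary integrals over $\partial T$ to element-wise $L^2$ norms via the trace inequality (\ref{A4}) and the inverse inequality (\ref{Aa}), then applying the approximation estimates of Lemma \ref{lemmaA1} together with the Korn-type bound of Lemma \ref{korninqua}. A key preliminary observation is that the semi-norm $\3bar\bv\3bar$ controls two quantities simultaneously: $\sum_T \|\varepsilon(\bv_0)\|_T^2$ and $\sum_T h_T^{-1}\|Q_b\bv_0-\bv_b\|_{\partial T}^2$. For each functional the natural quantity appearing will be $\|\bv_0-\bv_b\|_{\partial T}$ (or $\|Q_b\bv_0-\bv_b\|_{\partial T}$), so the first routine step is to split $\bv_0-\bv_b = (\bv_0-Q_b\bv_0) + (Q_b\bv_0-\bv_b)$ and bound the second piece directly by $\3bar\bv\3bar$, while the first piece $\|\bv_0-Q_b\bv_0\|_{\partial T}$ is exactly what Lemma \ref{korninqua} bounds by $Ch_T^{1/2}\|\varepsilon(\bv_0)\|_T$, hence also by $\3bar\bv\3bar$. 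This is the place where the ``creative use of Korn's inequality'' advertised in the introduction enters, and I expect it to be the main obstacle: without Lemma \ref{korninqua} one would only have a crude inverse-inequality bound on $\|\bv_0-Q_b\bv_0\|_{\partial T}$ that is not uniform in the polygonal shape, so the whole estimate would fail for general polytopal meshes.

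For (\ref{A7}), I would write $s(Q_h\bw,\bv)=\sum_T h_T^{-1}\langle Q_b(Q_0\bw)-Q_b\bw,\ Q_b\bv_0-\bv_b\rangle_{\partial T}$, apply Cauchy--Schwarz over $\partial T$ and then over $T\in\T_h$, and recognize the second factor as $\le \3bar\bv\3bar$. The first factor is $\big(\sum_T h_T^{-1}\|Q_b(Q_0\bw-\bw)\|_{\partial T}^2\big)^{1/2}$; using that $Q_b$ is an $L^2$ projection (hence a contraction) this is bounded by $\big(\sum_T h_T^{-1}\|Q_0\bw-\bw\|_{\partial T}^2\big)^{1/2}$, and the trace inequality (\ref{A4}) plus (\ref{A1}) with $m=0,1$ gives $Ch^r\|\bw\|_{r+1}$. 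This yields (\ref{A7}).

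For (\ref{A8}) and (\ref{A9}) the structure is the same. Starting from (\ref{l}), $\ell_\bw(\bv)=2\sum_T\langle\bv_0-\bv_b,\ \mu(\varepsilon(\bw)-\textbf{Q}_h\varepsilon(\bw))\bn\rangle_{\partial T}$; I apply Cauchy--Schwarz to get $\le C\big(\sum_T h_T^{-1}\|\bv_0-\bv_b\|_{\partial T}^2\big)^{1/2}\big(\sum_T h_T\|\varepsilon(\bw)-\textbf{Q}_h\varepsilon(\bw)\|_{\partial T}^2\big)^{1/2}$. The second factor is controlled by (\ref{A4}) and (\ref{A2}) by $Ch^r\|\bw\|_{r+1}$. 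For the first factor I use the splitting above: $h_T^{-1}\|\bv_0-\bv_b\|_{\partial T}^2 \le 2h_T^{-1}\|\bv_0-Q_b\bv_0\|_{\partial T}^2 + 2h_T^{-1}\|Q_b\bv_0-\bv_b\|_{\partial T}^2 \le C\|\varepsilon(\bv_0)\|_T^2 + 2h_T^{-1}\|Q_b\bv_0-\bv_b\|_{\partial T}^2$ by Lemma \ref{korninqua}, so summing over $T$ gives $\le C\3bar\bv\3bar^2$. Combining the two factors gives (\ref{A8}). The argument for (\ref{A9}) is verbatim the same with $\rho-{\cal Q}_h\rho$ in place of $\mu(\varepsilon(\bw)-\textbf{Q}_h\varepsilon(\bw))$, using (\ref{A3}) instead of (\ref{A2}), which produces $Ch^r\|\rho\|_r\3bar\bv\3bar$. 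The only subtlety worth flagging is that these bounds must be uniform over shape-regular polytopes, which is precisely why Lemma \ref{korninqua} — rather than a naive inverse estimate — is indispensable.
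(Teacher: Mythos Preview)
Your proposal is correct and follows essentially the same approach as the paper's proof: Cauchy--Schwarz on each boundary term, the trace inequality (\ref{A4}) together with the approximation estimates (\ref{A1})--(\ref{A3}) for the $\bw$/$\rho$ factor, and the splitting $\bv_0-\bv_b=(\bv_0-Q_b\bv_0)+(Q_b\bv_0-\bv_b)$ combined with Lemma~\ref{korninqua} for the $\bv$ factor in (\ref{A8})--(\ref{A9}). The only cosmetic difference is that for (\ref{A7}) the paper drops $Q_b$ via the orthogonality identity $\langle Q_b(Q_0\bw-\bw),Q_b\bv_0-\bv_b\rangle_{\partial T}=\langle Q_0\bw-\bw,Q_b\bv_0-\bv_b\rangle_{\partial T}$ rather than invoking the $L^2$-contraction of $Q_b$ as you do, but this is immaterial.
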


\begin{proof}
To derive (\ref{A7}), we use (\ref{EQ:stabilizer}), the
Cauchy-Schwarz inequality, and the estimates (\ref{A4}) and
(\ref{A1}) to obtain
\begin{equation*}
\begin{split}
|s(Q_h\bw,\bv)|=&\Big|\sum_{T\in {\cal
T}_h}h_T^{-1}\langle Q_bQ_0\bw-Q_b\bw, Q_b\bv_0-
\bv_b\rangle_{\partial
T}\Big|\\
=&\Big|\sum_{T\in {\cal T}_h}h_T^{-1}\langle
 Q_0\bw - \bw,Q_b\bv_0-
\bv_b\rangle_{\partial
T}\Big|\\
\leq & \Big(\sum_{T\in {\cal T}_h}h_T^{-1}\|Q_0\bw-
\bw\|_{\partial T}^2\Big)^{\frac{1}{2}}\Big(\sum_{T\in {\cal
T}_h}h_T^{-1}\|Q_b\bv_0- \bv_b\|_{\partial
T}^2\Big)^{\frac{1}{2}}\\
\leq & C\Big(\sum_{T\in {\cal T}_h}h_T^{-2}\|Q_0\bw- \bw\|_{T}^2 +
\|\nabla(Q_0\bw- \bw)\|_{T}^2\Big)^{\frac{1}{2}} \3bar \bv \3bar\\
\leq & Ch^r\|\bw\|_{r+1}\3bar \bv \3bar.
\end{split}
\end{equation*}

To prove (\ref{A8}), we use the Cauchy-Schwarz inequality, the
estimates (\ref{A4}) and (\ref{A2}) to get
\begin{equation}\label{korn}
\begin{split}
|\ell_\bw(\bv)|=&\left|2\mu \sum_{T\in{\cal
T}_h}\langle\bv_0-\bv_b,( \varepsilon(\bw)-\textbf{Q}_h\varepsilon(
\bw))\bn\rangle_{\partial T}\right|\\
\leq & C \Big(\sum_{T\in{\cal T}_h}h_T^{-1}\|
 \bv_0-\bv_b\|^2_{\partial T}\Big)^{ \frac{1}{2}}
\Big(\sum_{T\in{\cal T}_h}h_T \|
\varepsilon(\bw)-\textbf{Q}_h\varepsilon(\bw)\|^2_{\partial
T}\Big)^{ \frac{1}{2}}\\
\leq & C h^r\|\bw\|_{r+1}\Big(\sum_{T\in{\cal T}_h}h_T^{-1}\|
 \bv_0-\bv_b\|^2_{\partial T}\Big)^{ \frac{1}{2}}.
\end{split}
\end{equation}
For the term  $ \sum_{T\in{\cal T}_h}h_T^{-1}\|
 \bv_0-\bv_b\|^2_{\partial T} $, from Lemma \ref{korninqua} we have
\begin{equation}\label{korn1}
\begin{split}
 & \sum_{T\in{\cal T}_h}h_T^{-1}\|
 \bv_0-\bv_b\|^2_{\partial T}\\
\leq &  2 \sum_{T\in{\cal T}_h}h_T^{-1}\|
 \bv_0-Q_b\bv_0\|^2_{\partial T}+ 2\sum_{T\in{\cal T}_h}h_T^{-1}\|
 Q_b\bv_0-\bv_b\|^2_{\partial T}\\
\leq & C \sum_{T\in{\cal T}_h} \| \varepsilon( \bv_0)\|^2_{  T}+2
\sum_{T\in{\cal T}_h}h_T^{-1}\|
 Q_b\bv_0-\bv_b\|^2_{\partial T}\\
\leq & C \3bar \bv\3bar^2 .
\end{split}
\end{equation}
Substituting (\ref{korn1}) into (\ref{korn}) yields
\begin{equation*}
 |\ell_\bw(\bv)|
\leq  C h^r\|\bw\|_{r+1}\3bar \textbf{v}\3bar.
 \end{equation*}

As to (\ref{A9}), we use the Cauchy-Schwarz inequality, the
estimates (\ref{A3}) and (\ref{korn1}) to obtain
\begin{equation*}
\begin{split}
|\theta_\rho(\textbf{v})|&=\left|\sum_{T\in{\cal T}_h}\langle
 \textbf{v}_0-\textbf{v}_b  ,
(\rho- {\cal Q}_h\rho) \textbf{n}\rangle_{\partial T}\right|\\
&\leq  \Big(\sum_{T\in{\cal T}_h}h_T^{-1}\|
 \textbf{v}_0-\textbf{v}_b\|^2_{\partial T}\Big)^{ \frac{1}{2}}
\Big(\sum_{T\in{\cal T}_h}h_T \|  (\rho- {\cal Q}_h\rho)
\textbf{n} \|^2_{\partial
T}\Big)^{ \frac{1}{2}}\\
 &\leq  C h^r\| \rho\|_{r}\3bar\textbf{v}\3bar.
\end{split}
\end{equation*}
This completes the proof of the lemma.
\end{proof}

\section{Numerical Results}
In this section, we shall report some numerical results for the weak
Galerkin finite element scheme proposed and analyzed in previous
sections. In our numerical investigation, we considered the linear
elasticity equation (\ref{primal_model}) in the two dimensional
square domain $\Omega =(0,1)^2$. The weak Galerkin approximations
are obtained by using the lowest order finite elements. More
precisely, the finite element space is given either by
\begin{equation}\label{EQ:August01:001}
{V}_h=\{\bv=\{\bv_0,\bv_b\}:\ \bv_0\in[P_1(T)]^2,\bv_b\in
P_{RM}(e),\ e\subset\partial T,\ T\in\T_h\}
\end{equation}
or by
\begin{equation}\label{EQ:August01:002}
\bar{V}_h=\{\bv=\{\bv_0,\bv_b\}:\ \bv_0\in[P_1(T)]^2,\bv_b\in
P_{1}(e),\ e\subset\partial T,\ T\in\T_h\}.
\end{equation}
Since $P_{RM}(e)\subset P_1(e)$ for each edge $e\subset \pT$, then
$V_h$ is clearly a subspace of $\bar{V}_h$. It can be seen that all
the theoretical results developed for $V_h$ can be extended to
$\bar{V}_h$ without any difficulty.

The discrete weak gradient and the discrete weak divergence are
computed on each element $T\in\T_h$ according to their definition.
More precisely, for any $\bv\in {V}_h$ or $\bv\in \bar{V}_h$, the
discrete weak gradient $\nabla_w\bv$ on the element $T$ is given by
\begin{eqnarray}\label{EQ:August01:003}
(\nabla_w\bv,\varphi)_T&=&\langle\bv_b,\varphi\cdot\bn\rangle_{\partial
T},\qquad \forall \ \varphi\in[P_0(T)]^{2\times2}.
\end{eqnarray}
The discrete weak divergence $\nabla_w\cdot\bv$ on $T$ is given by
\begin{eqnarray}\label{EQ:August01:004}
(\nabla_w\cdot\bv,\psi)_T&=&\langle\bv_b,\psi\bn\rangle_{\partial
T},\qquad \forall \ \psi\in P_0(T).
\end{eqnarray}

Denote by $\bu_h=\{\bu_0,\bu_b\}$ and $\bu$ the solution to the weak
Galerkin formulation (\ref{WGA_primal}) and the original equation
(\ref{model-weak}), respectively. Define the error by $\be_h = Q_h
\bu - \bu_h = \{\be_0,\,\be_b\}$ where $Q_h \bu$ is the $L^2$
projection of the exact solution $\bu$ in $V_h$ or $\bar{V}_h$, as
appropriate. The error for the weak Galerkin finite element solution
is computed in three norms defined as follows
\begin{eqnarray*}
\3bar\be_h\3bar_{*}^2&=&\sum_{T\in\mathcal{T}_h}\left\{2\mu\int_{T}|\varepsilon_w(\be_h)|^2dT+\lambda\int_{T}|\nabla_w\cdot\be_h|^2dT
+h_T^{-1}\int_{\partial T}|Q_b\be_0-\be_b|^2ds\right\},
\\
\|\be_0\|^2&=&\sum_{T\in\mathcal{T}_h}\int_T|\be_0|^2dT,
\\
\|\be_b\|^2&=&\sum_{T\in\mathcal{T}_h}h_T\int_{\partial
T}|\be_b|^2ds.
\end{eqnarray*}
It can be seen that $\3bar\be_h\3bar_{*}$ is an $H^1$-like norm
for the error function, and $\|\be_0\|$ and $\|\be_b\|$ are
typical $L^2$ norms for the error in the interior and on the
boundary of each element.

\subsection{Test Problem 1}
Consider the elasticity equation (\ref{primal_model}) in the square
domain $\Omega =(0,1)^2$ which is partitioned into uniform
triangular mesh $\T_h$ with mesh size $h$. The right-hand side
function $\bbf$ is chosen so that the exact solution is given by
\begin{eqnarray*}
\bu=\left(\begin{array}{ccc}
\sin(x) \sin(y) \\
1
\end{array}\right).
\end{eqnarray*}

Table \ref{tab:ex1:001} illustrates the computational result when
the rigid motions are employed on the boundary of each element;
i.e., the space $V_h$. Table \ref{tab:ex1:002} shows the result when
linear functions are used on the boundary of each element; i.e., the
space $\bar V_h$. Theoretically, these two methods have the same
order of convergence which is confirmed by these two tables. Note
that the WG method with $V_h$ has less number of degrees of freedom
than that of $\bar V_h$.

A numerical scheme is said to be convergent with order $\alpha$ if
the error decreases proportionally to $h^\alpha$, where $h$ is the
mesh parameter. From Tables \ref{tab:ex1:001} and \ref{tab:ex1:002}
we see that the convergence of the weak Galerkin finite element
scheme in the $L^2$-norm is of order $2$ and that in the $H^1$-norm
is of order $1$. The numerical results are in consistency with the
theoretical prediction.

\begin{table}[H]
  \caption{Problem 1: $\lambda=1$, $\mu=0.5$,
  and WG with $V_h$ (the rigid motion space on element boundary).}
\label{tab:ex1:001}
\begin{tabular}{||c|c|c|c|c|c|c||}
\hline
1/h&$\|\bu_0-Q_0\bu\|$&order&$\|\bu_b-Q_b\bu\|$&order&$\3bar\bu_h-Q_h\bu\3bar_*$&order\\
\hline
2&0.0750&--&0.0424&--&0.3103&--\\
4&0.0192&1.97&0.0115&1.88&0.1566&0.99\\
8&0.0049&1.98&0.0031&1.87&0.0787&0.99\\
16&0.0012&1.99&0.0008&1.93&0.0394&1.00\\
32&0.0003&2.00&0.0002&1.97&0.0197&1.00\\
 \hline
\end{tabular}
\end{table}

\begin{table}[H]
  \caption{Problem 1: $\lambda=1$, $\mu=0.5$,
  and WG with $\bar V_h$ (linear functions on element boundary).}
\label{tab:ex1:002}
\begin{tabular}{||c|c|c|c|c|c|c||}
\hline
1/h&$\|\bu_h-Q_0\bu\|$&order&$\|\bu_b-Q_b\bu\|$&order&$\3bar\bu_h-Q_h\bu\3bar$&order\\
\hline
2&0.0743&--&0.0424&--&0.3082&--\\
4&0.0190&1.96&0.0113&1.90&0.1555&0.99\\
8&0.0048&1.98&0.0031&1.88&0.0782&0.99\\
16&0.0012&1.99&0.0008&1.93&0.0392&1.00\\
32&0.0003&2.00&0.0002&1.97&0.0196&1.00\\
 \hline
\end{tabular}
\end{table}

\subsection{Test Problem 2} In the second test, the linear elasticity equation
(\ref{primal_model}) is also defined on the unit square domain
$\Omega =(0,1)^2$, but the exact solution is given as follows
\begin{eqnarray*}
\bu=\left(\begin{array}{ccc}
(x+y)^2 \\
(x-y)^2
\end{array}\right).
\end{eqnarray*}
The right-hand side function $\bbf$ is computed to match the exact
solution. The numerical results, as shown in Tables
\ref{tab:ex2:001}-\ref{tab:ex2:002}, are again based on the uniform
partition $\mathcal{T}_h$ for this domain. The numerical results
confirm the theoretical prediction developed in previous sections.

\begin{table}[H]
\caption{Problem 2: $\lambda=1$, $\mu=0.5$,
  and WG with $V_h$ (the rigid motion space on element boundary).}
\label{tab:ex2:001}
\begin{tabular}{||c|c|c|c|c|c|c||}
\hline
1/h&$\|\bu_0-Q_0\bu\|$&order&$\|\bu_b-Q_b\bu\|$&order&$\3bar\bu_h-Q_h\bu\3bar_*$&order\\
\hline
2&0.4334&--&0.2089&--&1.8410&--\\
4&0.1095&1.98&0.0528&1.98&0.9264&0.99\\
8&0.0275&2.00&0.0133&1.99&0.4644&1.00\\
16&0.0069&2.00&0.0034&2.00&0.2324&1.00\\
32&0.0017&2.00&0.0008&2.00&0.1162&1.00\\
 \hline
\end{tabular}
\end{table}

\medskip

\begin{table}[H]
  \caption{Problem 2: $\lambda=1$, $\mu=0.5$, and WG with $\bar V_h$ (linear functions on element boundary).}
\label{tab:ex2:002}
\begin{tabular}{||c|c|c|c|c|c|c||}
\hline
1/h&$\|\bu_h-Q_0\bu\|$&order&$\|\bu_b-Q_b\bu\|$&order&$\3bar\bu_h-Q_h\bu\3bar$&order\\
\hline
2&0.4310&--&0.2009&--&1.8355&--\\
4&0.1088&1.99&0.0502&2.00&0.9233&0.99\\
8&0.0273&2.00&0.0126&2.00&0.4628&1.00\\
16&0.0068&2.00&0.0032&1.99&0.2316&1.00\\
32&0.0017&2.00&0.0008&2.00&0.1158&1.00\\
 \hline
\end{tabular}
\end{table}

\subsection{Locking-Free Tests}
In the locking-free investigation, the linear elasticity equation
(\ref{primal_model}) has exact solutions given by
\begin{eqnarray*}
\bu=\left(\begin{array}{ccc}
\sin(x)\sin(y) \\
\cos(x)\cos(y)
\end{array}\right)
+\lambda^{-1}\left(\begin{array}{ccc}
x \\
y
\end{array}\right).
\end{eqnarray*}
The right-hand side function $\bbf$ is computed to match the exact
solution (note that it is $\lambda$-dependent). The numerical
results are based on the same uniform partition $\mathcal{T}_h$ for
the unit square domain. The results, as shown in Tables
\ref{Locking-Free:RM:01}-\ref{Locking-Free:P1:04}, clearly indicate
a locking-free convergence for the weak Galerkin finite element
method in various norms, which is consistent with theory.

\begin{table}[H]
\caption{WG with $V_h$ based on the element of
$\{P_1(T)/P_{RM}(e)\}$, $\mu=0.5$, and $\lambda=1$.}
\label{Locking-Free:RM:01}
\begin{tabular}{|c|c|c|c|c|c|c|}
\hline
1/h&$\|\bu_h-Q_0\bu\|$&order&$\|\bu_b-Q_b\bu\|$&order&$\3bar\bu_h-Q_h\bu\3bar$&order\\
\hline
2&0.0352&--&0.0331&--&0.1544&--\\
4&0.0097&1.86&0.0120&1.46&0.0834&0.89\\
8&0.0026&1.91&0.0037&1.68&0.0433&0.94\\
16&0.0007&1.96&0.0010&1.87&0.0220&0.98\\
32&0.0002&1.98&0.0003&1.96&0.0110&0.99\\
 \hline
\end{tabular}
\end{table}

\begin{table}[H]
\caption{WG with $V_h$ based on the element of
$\{P_1(T)/P_{RM}(e)\}$, $\mu=0.5$, and $\lambda=100$.}
\label{Locking-Free:RM:02}
\begin{tabular}{|c|c|c|c|c|c|c|}
\hline
1/h&$\|\bu_h-Q_0\bu\|$&order&$\|\bu_b-Q_b\bu\|$&order&$\3bar\bu_h-Q_h\bu\3bar$&order\\
\hline
2&0.0344&--&0.0291&--&0.1449&--\\
4&0.0100&1.79&0.0113&1.36&0.0774&0.90\\
8&0.0028&1.82&0.0038&1.59&0.0403&0.94\\
16&0.0008&1.91&0.0011&1.81&0.0205&0.97\\
32&0.0002&1.96&0.0003&1.93&0.0103&0.99\\
 \hline
\end{tabular}
\end{table}

\begin{table}[H]
\caption{WG with $V_h$ based on the element of
$\{P_1(T)/P_{RM}(e)\}$, $\mu=0.5$, and $\lambda=10,000$.}
\label{Locking-Free:RM:03}
\begin{tabular}{|c|c|c|c|c|c|c|}
\hline
1/h&$\|\bu_h-Q_0\bu\|$&order&$\|\bu_b-Q_b\bu\|$&order&$\3bar\bu_h-Q_h\bu\3bar$&order\\
\hline
2&0.0344&--&0.0290&--&0.1447&--\\
4&0.0100&1.79&0.0113&1.36&0.0773&0.90\\
8&0.0028&1.82&0.0038&1.59&0.0403&0.94\\
16&0.0008&1.90&0.0011&1.81&0.0205&0.97\\
32&0.0002&1.96&0.0003&1.93&0.0103&0.99\\
 \hline
\end{tabular}
\end{table}

\begin{table}[H]
\caption{WG with $V_h$ based on the element of
$\{P_1(T)/P_{RM}(e)\}$, $\mu=0.5$, and $\lambda=1,000,000$.}
\label{Locking-Free:RM:04}
\begin{tabular}{|c|c|c|c|c|c|c|}
\hline
1/h&$\|\bu_h-Q_0\bu\|$&order&$\|\bu_b-Q_b\bu\|$&order&$\3bar\bu_h-Q_h\bu\3bar$&order\\
\hline
2&0.0344&--&0.0290&--&0.1447&--\\
4&0.0100&1.79&0.0113&1.36&0.0773&0.90\\
8&0.0028&1.82&0.0038&1.59&0.0403&0.94\\
16&0.0008&1.90&0.0011&1.81&0.0205&0.97\\
32&0.0002&1.96&0.0003&1.93&0.0103&0.99\\
 \hline
\end{tabular}
\end{table}

\begin{table}[H]
\caption{WG with $\bar V_h$ based on the element of
$\{P_1(T)/P_{1}(e)\}$, $\mu=0.5$, and
$\lambda=1$.}\label{Locking-Free:P1:01}
\begin{tabular}{|c|c|c|c|c|c|c|}
\hline
1/h&$\|\bu_h-Q_0\bu\|$&order&$\|\bu_b-Q_b\bu\|$&order&$\3bar\bu_h-Q_h\bu\3bar$&order\\
\hline
2&0.0341&--&0.0313&--&0.1518&--\\
4&0.0093&1.87&0.0115&1.45&0.0816&0.90\\
8&0.0025&1.91&0.0036&1.67&0.0424&0.95\\
16&0.0006&1.96&0.0010&1.86&0.0215&0.98\\
32&0.0002&1.98&0.0003&1.95&0.0108&0.99\\
 \hline
\end{tabular}
\end{table}

\begin{table}[H]
\caption{WG with $\bar V_h$ based on the element of
$\{P_1(T)/P_{1}(e)\}$, $\mu=0.5$, and
$\lambda=100$.}\label{Locking-Free:P1:02}
\begin{tabular}{|c|c|c|c|c|c|c|}
\hline
1/h&$\|\bu_h-Q_0\bu\|$&order&$\|\bu_b-Q_b\bu\|$&order&$\3bar\bu_h-Q_h\bu\3bar$&order\\
\hline
2&0.0340&--&0.0281&--&0.1441&--\\
4&0.0098&1.79&0.0111&1.34&0.0769&0.91\\
8&0.0028&1.82&0.0037&1.58&0.0400&0.94\\
16&0.0007&1.90&0.0011&1.81&0.0204&0.97\\
32&0.0002&1.96&0.0003&1.93&0.0102&0.99\\
 \hline
\end{tabular}
\end{table}

\begin{table}[H]
\caption{WG with $\bar V_h$ based on the element of
$\{P_1(T)/P_{1}(e)\}$, $\mu=0.5$, and
$\lambda=10,000$.}\label{Locking-Free:P1:03}
\begin{tabular}{|c|c|c|c|c|c|c|}
\hline
1/h&$\|\bu_h-Q_0\bu\|$&order&$\|\bu_b-Q_b\bu\|$&order&$\3bar\bu_h-Q_h\bu\3bar$&order\\
\hline
2&0.0340&--&0.0280&--&0.1439&--\\
4&0.0098&1.79&0.0111&1.34&0.0768&0.91\\
8&0.0028&1.82&0.0037&1.58&0.0400&0.94\\
16&0.0007&1.90&0.0011&1.81&0.0203&0.97\\
32&0.0002&1.96&0.0003&1.93&0.0102&0.99\\
 \hline
\end{tabular}
\end{table}

\begin{table}[H]
\caption{WG with $\bar V_h$ based on the element of
$\{P_1(T)/P_{1}(e)\}$, $\mu=0.5$, and
$\lambda=1,000,000$.}\label{Locking-Free:P1:04}
\begin{tabular}{|c|c|c|c|c|c|c|}
\hline
1/h&$\|\bu_h-Q_0\bu\|$&order&$\|\bu_b-Q_b\bu\|$&order&$\3bar\bu_h-Q_h\bu\3bar$&order\\
\hline
2&0.0340&--&0.0280&--&0.1439&--\\
4&0.0098&1.79&0.0111&1.34&0.0768&0.91\\
8&0.0028&1.82&0.0037&1.58&0.0400&0.94\\
16&0.0007&1.90&0.0011&1.81&0.0203&0.97\\
32&0.0002&1.96&0.0003&1.93&0.0102&0.99\\
 \hline
\end{tabular}
\end{table}


\begin{thebibliography}{99}

\bibitem{abd1984}  {\sc D. N. Arnold, F. Brezzi and J. Douglas},
 {\em PEERS: a new mixed finite element for plane
elasticity}, Japan J. Appl. Math., 1 (1984), no. 2, 347-367.

\bibitem{babuska-suri} {\sc I. Babu\v{s}ka and M. Suri},
{\em Locking effects in the finite element approximation of
elasticity problems}, Numer. Math. 62 (1992), Issue 1, 439-463.

\bibitem{b2003}  {\sc S. Brenner},
 {\em Korn's inequalities for piecewise $H^1$ vector fields},
Math. Comp. 73 (2003), 1067-1087.

\bibitem{Brezzi-Fortin} {\sc F. Brezzi and M. Fortin},
{\em Mixed and Hybrid Finite Element Methods}, Springer-Verlag, New
York, 1991.

\bibitem{ciarlet-fem}
{\sc P. G. Ciarlet}, \textit{The Finite Element Method for Elliptic
Problems}, North-Holland, 1978.

\bibitem{Ciarlet} {\sc P. G. Ciarlet},
{\em Mathematical Elasticity, I. Three-Dimensional Elasticity},
North-Hollad, Amsterdam, 1988.

\bibitem{Dauge} {\sc M. Dauge},
{\em Elliptic Boundary Value Problems on Corner Domains - Smoothness
and Asymptotics of Solutions}, Volume 1341 of {\em Lecture Notes in
mathematics}. Springer, Berlin, 1988.

\bibitem{Daniele.01}
{\sc D. A. Di Pietro and S. Nicaise}, {\em A locking-free
discontinuous Galerkin method for linear elasticity in locally
nearly incompressible heterogeneous media}, J. Applied Numerical
Mathematics, 63 (2013), 105-116.

\bibitem{Girault} {\sc V. Girault and P.-A. Raviart},
{\em Finite Element Methods for Navier-Stokes Equations: Theory and
Algorithms}, Springer-Verlag, 1984.

\bibitem{Grisvard} {\sc P. Grisvard},
{\em Elliptic Problems in Nonsmooth Domains}, Volume 24 of {\em
Monographs and Studies in Mathematics}. Pitman,
Boston-London-Melbourne, 1985.

\bibitem{Duvaut} {\sc G. Guvaut and J. L. Lions},
{\em Inequalities in Mechanics and Physics}, Springer-Verlag, 1976.

\bibitem{ff1997}  {\sc M. Farhloul and M. Fortin},
 {\em Dual hybrid methods for the elasticity and the Stokes problems:
a unified approach}, Numer. Math., 76 (1997), 419-440.

\bibitem{larson}  {\sc P. Hansbo and M. G. Larson},
 {\em Discontinuous Galerkin methods for incompressible and nearly
 incompressible elasticity by Nitsche's methods},
 Comput. Methods Appl. Mech. Engrg., vol. 191 (2002),  no. 17-18, 1895-1908.

\bibitem{mwy2}
{\sc L. Mu, J. Wang, and X. Ye}, {\em A weak Galerkin finite element
methods with polynomial reduction}, arXiv:1304.6481,  J. Comp. and
Appl. Math., 285 (2015), 45-58. doi:10.1016/j.cam.2015.02.001.

\bibitem{mwy}  {\sc L. Mu, J. Wang  and X. Ye},
{\em Weak Galerkin finite element methods for the biharmonic
equation on polytopal meshes}, Numerical Methods for Partial
Differential Equations, 30 (2014), 1003-1029.

\bibitem{Nitsche} {\sc J.A. Nitsche},
{\em On Korn's second inequality}, RAIRO, Analyse Numerique, 15
(1981), no. 3, 237-248.

\bibitem{scott-vogelius} {\sc L.R. Scott and M. Vogelius},
{\em Norm estimates for a maximal right inverse of the divergence
operator in spaces of piecewise polynomials}, RAIRO Math. Modeling
Num. Anal. 19 (1985), 111-143.

\bibitem{vogelius} {\sc M. Vogelius},
{\em An analysis of the $p$-version of the finite element method for
nearly incompressible materials}, Numerische Mathematik, 41 (1983),
no. 1, 39-53.

\bibitem{wwbi}  {\sc C. Wang and J. Wang},
{\em An efficient numerical scheme for the biharmonic equation by
weak Galerkin finite element methods on polygonal or polyhedral
meshes}, arXiv:1303.0927v1. Computers and Mathematics with
Applications, 68 (2014), 2314-2330. DOI:10.1016
/j.camwa.2014.03.021.

\bibitem{ww-survey}
{\sc J. Wang and C. Wang}, {\em Weak Galerkin finite element methods
for elliptic PDEs (in Chinese)}, Sci Sin Math, 45 (2015), 1061-1092,
doi:10.1360/N012014-00233.

\bibitem{wy1202}  {\sc J. Wang and X. Ye},
 {\em A weak Galerkin mixed finite element method for second-order elliptic problems},
 arXiv:1202.3655v1. Math. Comp., 83 (2014), 2101-2126.

\bibitem{wy1302} {\sc  J. Wang and X. Ye},
 {\em A weak Galerkin finite element method for
 the Stokes equations}, arXiv:1302.2707v1. Advances in Computational Mathematics,
 May, 2015. DOI 10.1007/s10444-015-9415-2.

\bibitem{wysec2}  {\sc J. Wang and X. Ye},
 {\em A weak Galerkin finite element method for
second-order elliptic problems}, J. Comp. and Appl. Math, 241(2013),
103-115.

\bibitem{Wihler}  {\sc T. P. Wihler},
{\em Locking-free adaptive discontinuous Galerkin FEM for linear
elasticity problems}, Math. Comp., 75(2006), no. 255, 1087-1102.

\end{thebibliography}
\end{document}